\documentclass{amsart}
\usepackage[utf8]{inputenc}
\usepackage[T1]{fontenc}

\usepackage{amsmath}
\usepackage{amssymb}
\usepackage{tikz-cd}

\usepackage{amsthm,thmtools}
\usepackage[hidelinks]{hyperref}
\usepackage{cleveref}

\theoremstyle{plain}
\newtheorem{theorem}{Theorem}[section]

\newtheorem{lemma}[theorem]{Lemma}

\newtheorem{proposition}[theorem]{Proposition}

\newtheorem{corollary}[theorem]{Corollary}

\theoremstyle{definition}
\newtheorem{definition}{Definition}[section]

\theoremstyle{remark}
\newtheorem{remark}[theorem]{Remark}

\title{Non-reduced components of global nilpotent cones}
\date\today
\keywords{Hitchin fibration, moduli of sheaves}

\author{David Zhiyuan Bai}
\address{Yale University}
\email{david.bai@yale.edu}

\author{David Fang}
\address{Yale University}
\email{david.fang@yale.edu}

\begin{document}
\begin{abstract}
We determine the non-reduced components of global nilpotent cones in various cases of interest.
In particular, under the appropriate coprimality conditions, we show:

(1) the global nilpotent cone for an $L$-twisted $\operatorname{GL}_r$-Hitchin fibration associated to a curve $C$ of genus $g\ge 2$ is nowhere reduced, where $L$ is either the canonical bundle or has degree greater than $2g-2$;

(2) the global nilpotent cone for a moduli space of one-dimensional sheaves on a K3, abelian, or del Pezzo surface is nowhere reduced;

(3) suppose $\ell$ is a primitive, basepoint-free, big and nef class on a K3 surface, then a general fiber of a Beauville-Mukai system for the class $r\ell$ has primitive homology class if and only if $r=1$.

Our methods include group scheme actions on Lagrangian fibrations, a GIT-stratification of global nilpotent cones of Hitchin fibrations, and deformation to the normal cone.
\end{abstract}
\maketitle
\tableofcontents

\section{Introduction}
\subsection{The global nilpotent cone}
Let $S$ be a smooth quasiprojective surface and $\beta\in H_2(S;\mathbb Z)$ a curve class.
Under a polarization which shall remain implicit, we denote by $M=M_S(\beta,\chi)$ the moduli space of Gieseker-stable sheaves $\mathcal F$ on $S$ with proper support and
\[[\operatorname{supp}\mathcal F]=\beta,\chi(\mathcal F)=\chi,\]
where $\operatorname{supp}\mathcal F$ denotes the Fitting support of $\mathcal F$.

Denote by $|\beta|$ the Hilbert scheme of proper curves on $S$ with class $\beta$.
We then have a support map
\[h:M_S(\beta,\chi)\to|\beta|,\mathcal F\mapsto \operatorname{supp}\mathcal F.\]

Morphisms of this form, particularly when $h$ is a fibration, appear in various areas of algebraic geometry and representation theory.

If $S$ is a K3 surface, then a smooth projective $M$ is an example of a compact hyper-K\"ahler manifold of K3${}^{[n]}$-type, of which $h$ is a Lagrangian fibration.
A fibration of this form is called a Beauville-Mukai system.

When $S$ is the total space $\operatorname{Tot}_L$ of a (suitable) line bundle $L$ on a smooth curve $C$ of genus at least $2$, the choice $\beta=r[C]$, where $C$ embeds into $S$ via the zero-section, is known as the $L$-twisted ($\operatorname{GL}_r$-)Hitchin fibration.
When $L=K_C$, we simply call it the Hitchin fibration.

A more classical description of the $L$-twisted Hitchin fibration is to identify $M$ with the stable moduli space of Higgs bundles.
These are pairs $(E,\phi)$ where $E$ is a vector bundle of rank $r$ and degree $\chi+r(g-1)$, and $\phi:E\to E\otimes L$ is a morphism (known as the Higgs field).

The base $|\beta|$ of the support map is then identified with
\[B=\bigoplus_{i=1}^r H^0(C,L^{\otimes i})\]
where $rC\in |\beta|$ corresponds to $0\in B$, and the morphism sends $(E,\phi)$ to the characteristic polynomial of $\phi$.
The equivalence of the two formulations is called the BNR correspondence \cite{bnr}.

To understand the geometry of $h$, it is natural to study its fibers.
We are interested in a type of fiber known in the literature as \emph{global nilpotent cones}:
Suppose $C$ is a smooth curve on $S$ with $g(C)\ge 2$ and $\beta=r[C]$ for some $r\ge 2$, then the effective Cartier divisor $rC$ belongs to $|\beta|$, so we may define
\begin{definition}
    The fiber $h^{-1}(rC)$ is called the global nilpotent cone.
\end{definition}

The most well-known example of this is the global nilpotent cone for Hitchin fibrations.
In the classical description, it parameterizes Higgs bundles $(E,\phi)$ with nilpotent Higgs field $\phi$.

The multiplicities of the components in global nilpotent cones are wide open.
The case $r=2$ is quite well-studied:
The multiplicities have been computed for both the Hitchin fibration \cite{hitchin19} and the Beauville-Mukai system of genus $2$ \cite{hellman21}.
For the Hitchin fibration, the multiplicities of generically regular components are also known \cite{HH}.

The purpose of this article is to establish the nonexistence of reduced components in the global nilpotent cone in almost all cases where $h$ is known to be a fibration between smooth varieties.
Our results will show, in particular:
\begin{theorem}\label{intro:mainthm}
    Suppose $\gcd(r,\chi)=1$, and we are in one of the following situations:
    \begin{enumerate}
        \item[(CY1)] $S$ is a K3 or an abelian surface, and $[C]$ is primitive;
        \item[(CY2)] $S=\operatorname{Tot}_{K_C}$, and $C$ embeds as the zero-section.
        \item[(F1)] $(S,H)$ is a polarized del Pezzo surface, and $\gcd([C]\cdot H,\chi)=1$;
        \item[(F2)] $S=\operatorname{Tot}_L$ where $L\in\operatorname{Pic}^{>2g-2}(C)$, and $C$ embeds as the zero-section.
    \end{enumerate}
    Then $h:M\to |\beta|$ is a proper flat fibration between smooth varieties.
    If either $r>2$ or $r=2$ and $2\mid \deg_C N_{C/S}$, then the global nilpotent cone $h^{-1}(rC)$ is nowhere reduced.
\end{theorem}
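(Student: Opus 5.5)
We split the statement into two parts: the structural claim that $h$ is a proper flat fibration between smooth varieties, and the main claim that $h^{-1}(rC)$ is nowhere reduced.

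\textbf{Smoothness, properness and flatness.} This part is largely standard and we treat it first.

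The coprimality $\gcd(r,\chi)=1$ (together with $\gcd([C]\cdot H,\chi)=1$ in case (F1), and primitivity of $[C]$ in case (CY1)) forces stability to equal semistability. Hence $M$ is a fine moduli space, and it is proper over $|\beta|$ by Langton-type arguments.

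Smoothness of $M$ comes from the vanishing of the obstruction space, which is the traceless part of $\operatorname{Ext}^2(\mathcal F,\mathcal F)$:
\begin{itemize}
\item in the K3 and abelian cases this follows from Mukai's symplectic argument;
\item for $\operatorname{Tot}_{K_C}$ it is the classical smoothness of the Higgs moduli space;
\item for (F1) and (F2), Serre duality identifies $\operatorname{Ext}^2(\mathcal F,\mathcal F)$ with $\operatorname{Hom}(\mathcal F,\mathcal F\otimes K_S)^\vee$, which vanishes because $-K_S$ is positive along the support and $\mathcal F$ is stable.
\end{itemize}

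The base $|\beta|$ is smooth: it is projective space, or a vector space $B$ in the Hitchin cases.

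Flatness follows by miracle flatness once every fiber has dimension $\dim M-\dim|\beta|$. Fibers cannot be smaller by upper semicontinuity. They cannot be larger by a Lagrangian/isotropic argument in the symplectic cases, and in cases (F1) and (F2) by embedding into a symplectic (Poisson) situation, namely $\operatorname{Tot}_{K}$ of a larger surface, or by a dimension count via spectral curves.

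\textbf{Nowhere reducedness.} The core idea is to show that every irreducible component $Z$ of $h^{-1}(rC)$ has multiplicity at least $2$. It suffices to exhibit, at a general point $x\in Z$, a tangent vector to the fiber $h^{-1}(rC)$ that is not tangent to $Z_{\mathrm{red}}$. Equivalently, we show that the Zariski tangent space of the scheme-theoretic fiber at a general point of every component strictly exceeds $\dim Z$.

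We plan a deformation to the normal cone. Deform $S$ to the normal bundle $\operatorname{Tot}_{N_{C/S}}$ of $C$; this degenerates the moduli problem near $h^{-1}(rC)$ to the $N_{C/S}$-twisted Hitchin fibration, with the global nilpotent cone mapping onto the twisted nilpotent cone. Flatness lets us conclude that generic reducedness of a component upstairs forces generic reducedness of the corresponding component of the twisted Hitchin nilpotent cone. This reduces all four cases to the $L$-twisted Higgs case with $L=N_{C/S}$, where either $\deg L=2g-2$ in the Calabi–Yau case, or $\deg L>2g-2$.

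In the Higgs setting, the $\mathbb G_m$-action scaling $\phi$ gives a stratification of the nilpotent cone. We use a GIT (Bialynicki-Birula / Hesselink) stratification by Jordan type of $\phi$. Each component is the closure of the upward flow from a fixed-point component, and is described by chains $E_1\to E_2\otimes L\to\cdots$.

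On each stratum we produce an explicit first-order deformation of $\phi$ that keeps the characteristic polynomial in the first-order neighborhood of $0\in B$ but leaves the stratum. The natural candidate is $\phi+\epsilon\psi$ with $\psi$ chosen so that $\operatorname{tr}(\phi\psi)=0$ and all higher coefficients vanish to first order. The coefficients of the characteristic polynomial are homogeneous of degree $i\geq 1$ in $\phi$, so for nilpotent $\phi$ only the linear term $\operatorname{tr}(\psi)$ and the terms like $\operatorname{tr}(\phi^{i-1}\psi)$ matter. Alternatively, we can use a group scheme action: the relative Picard scheme of the family of spectral curves acts on $M$. Its non-reduced stabilizer, or equivalently the nilpotent part of the ring of functions on $rC$, acts nontrivially on the tangent space, which produces the extra tangent direction.

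\textbf{Main obstacle and the exceptional case.} The main obstacle is producing this extra direction uniformly, for every component including those with regular (single Jordan block) generic $\phi$.

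For $r>2$, we intend to use $\psi=\phi^{r-2}\cdot(\text{section})$. This is nonzero but traceless against all powers of $\phi$, and is not tangent to the stratum because it changes the ranks of the Jordan filtration at second order only.

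For $r=2$, the regular component consists of $\phi$ with $E\supset M_1$, $E/M_1\cong M_2$, and $\phi$ given by a section of $M_1M_2^{-1}L$. Here the extra direction must come from a square root of the section, which exists only when the relevant degree $\deg L$ is even. This is why the hypothesis $2\mid\deg_C N_{C/S}$ is needed. Verifying this parity argument carefully is where we expect the real work to lie.
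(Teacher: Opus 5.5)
There is a genuine gap: both load-bearing steps of your nowhere-reducedness argument fail. The reduction of all four cases to the twisted Higgs case runs semicontinuity backwards. In a flat family, generic reducedness of fibers is an open condition. It spreads from the special fiber to nearby fibers, but not conversely. So a nowhere reduced $N_{C/S}$-twisted nilpotent cone at $t=0$ says nothing about $h^{-1}(rC)$ for $t\neq 0$ (compare $x^2=t$).

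The paper uses the degeneration only in case (F1), and only to transport an open property: dominance of the smooth stable-pair map $\mathring{\operatorname{Hilb}}^n_{rC}\to h^{-1}(rC)$, which rules out irregular components. Multiplicities are never transported. They are read off on $S$ itself, since $\operatorname{Hilb}^n_{rC}$ is analytically locally independent of $(S,C)$ and may be computed for $\{y^r=0\}\subset\mathbb A^2$.

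Your tangent-vector mechanism also fails. The direction $\psi=\phi^{r-2}s$ needs $s\in H^0(L^{\otimes(3-r)})$, which is $0$ for $r\ge 4$ since $\deg L>0$. When $s$ exists ($r=3$, so $\psi=c\phi$), $\phi+t\psi$ stays nilpotent for all $t$, so $\psi$ is tangent to the reduced cone rather than transverse to $Z$. More fundamentally, if $\phi$ is regular nilpotent at every point of $C$, the spectral sheaf is a line bundle on $rC$ and $h$ is smooth there. So the non-reducedness of a generically regular component is concentrated at the finitely many points where $\phi$ degenerates, and no construction based on the generic Jordan type can see it.

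The missing idea is the paper's. For $\chi\gg 0$, the forgetful map from an open $\mathring{\operatorname{Hilb}}^n_{rC}$ of stable pairs is smooth onto the generically regular locus. By Luan, the component of $\operatorname{Hilb}^n_{rC}$ indexed by $n=m_1+\cdots+m_k$ with $m_i\le r$ has multiplicity at least $\prod_i(r-m_i+1)$. The hypothesis enters arithmetically: $g_\beta-1\equiv\binom{r}{2}\deg N_{C/S}\pmod r$ together with $\gcd(r,\chi)=1$ forces $r\nmid n=\chi+g_\beta-1$, so some $m_i<r$.

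For $r=2$ no square roots are involved. If $\deg N_{C/S}$ is odd, line bundles on $2C$ form a genuinely reduced component. If it is even, the induced section $E/\ker\phi\to\ker\phi\otimes L$ has degree $\equiv\chi+\deg L\equiv 1\pmod 2$, so it must vanish somewhere.

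Irregular components need their own argument, which you only gesture at. In (CY1) and (CY2), a sheaf scheme-theoretically supported on $r'C$ with $r'<r$ is fixed by the positive-dimensional group $\ker(\operatorname{Pic}^\nu(rC)\to\operatorname{Pic}^\nu(r'C))$. The de Cataldo--Rapagnetta--Sacc\`a duality between $d\operatorname{act}$ and $dh$ for the Liouville structure then prevents $dh$ from being surjective. In (F2), the weight computation on the GIT strata shows that strata with some $r_i>1$ have dimension below that of the fiber, so they are not components. The relevant input is a positive-dimensional stabilizer, not a nontrivial action on tangent spaces. Generically regular sheaves have finite stabilizers, so this mechanism cannot substitute for the Hilbert-scheme bound.
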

Note that $h$ is generally not flat when $S$ is of general type, so we only consider the Calabi-Yau and Fano cases.

\begin{remark}
    For $r=2$, the additional requirement that $2\mid\deg_C N_{C/S}$ is automatic in the Calabi-Yau cases, but can fail in the Fano cases.
    Without this additional constraint, the global nilpotent cone can have reduced component(s), since $\chi(\mathcal O_{2C})$ can be odd.
\end{remark}

\subsection{Irregular components}
The components of $h^{-1}(rC)$ can be classified into the following dichotomy:
\begin{definition}
    Suppose $\mathcal F\in h^{-1}(rC)$.
    We say $\mathcal F$ is generically regular if the Fitting support of $\mathcal F$ equals the schematic support.
    Otherwise, we say $\mathcal F$ is irregular.
    A component of $h^{-1}(rC)$ is generically regular (resp.~irregular) if a generic point of it is.
\end{definition}
An example of an irregular component is the inclusion of the moduli space of stable bundles in the nilpotent cone of the Hitchin fibration.

\begin{theorem}[=\Cref{symp:irreg_nonred}]
    Suppose $S$ is symplectic and there is a smooth open $U\subset|\beta|$ containing $rC$ such that the curve family over $U$ is generically smooth and $M_U\to U$ is proper.
    Then any irregular component in $h^{-1}(rC)$ is non-reduced.
\end{theorem}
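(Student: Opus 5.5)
The plan is to argue through the Lagrangian geometry of $h$ over $U$ together with the action of the relative Picard group scheme. Since $S$ is symplectic, the Mukai form makes $M_U$ holomorphic symplectic, and $h_U:M_U\to U$ is proper with generically smooth support curves. Hence $h_U$ is a Lagrangian fibration, and it is flat because both $M_U$ and $U$ are smooth and the fibers are equidimensional of dimension $\dim U$ (a Lagrangian subvariety of $M_U$ has dimension $\dim U$). Let $\mathcal C\to U$ be the universal curve, and let $P\to U$ be its relative generalized Jacobian, i.e.\ the group scheme of line bundles on the fibers. Then $P$ acts on $M_U$ over $U$ by $\mathcal F\mapsto \mathcal F\otimes\mathcal L$. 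Over the smooth locus, $P$ acts simply transitively on fibers, so $\dim P_u=\dim U$ generically. Over the special point $u_0=rC$ we have $P_{u_0}=\operatorname{Pic}(rC)$, and it still has dimension $\dim U$, since $h^1(\mathcal O_{rC})$ is constant in a flat family of curves.

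First I will show that at a reduced point the orbits are open. Suppose an irreducible component $Z\subset h^{-1}(rC)$ is generically reduced, and take a general point $\mathcal F\in Z$. Then $h^{-1}(rC)$ is smooth at $\mathcal F$ of dimension $\dim U$, so $dh$ is surjective at $\mathcal F$ and $\ker dh_{\mathcal F}=T_{\mathcal F}Z$. This kernel is a Lagrangian subspace. By the standard argument for Lagrangian fibrations, the symplectic form identifies $\ker dh$ with $h^*T^*U$, and the induced vector fields are the Hamiltonian flows of pulled-back functions. I will identify the infinitesimal action of $\operatorname{Lie}P_{u_0}=H^1(\mathcal O_{rC})$ on $\mathcal F$ with this Hamiltonian map $T^*_{u_0}U\to T_{\mathcal F}Z$, using the duality $T_{u_0}U\cong H^0(N_{rC/S})$ and $H^1(\mathcal O_{rC})\cong H^0(\omega_{rC})^\vee \cong H^0(N_{rC})^\vee$. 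The last isomorphism uses $K_S\cong\mathcal O_S$ and adjunction. When $dh$ is surjective, the Hamiltonian map is injective, so the orbit map $\operatorname{Lie}P_{u_0}\to T_{\mathcal F}Z$ is an isomorphism. It follows that the $P_{u_0}$-orbit of $\mathcal F$ is open in $Z$.

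Next I will get the contradiction from the stabilizer. An open orbit gives $\dim\operatorname{Stab}(\mathcal F)=\dim P_{u_0}-\dim Z=0$, and in fact the differential computation above shows the stabilizer is étale, with trivial Lie algebra. Now suppose $\mathcal F$ is irregular, so its schematic support $D\subsetneq rC$ is a proper subscheme. Then every line bundle of the form $\mathcal O(\text{something trivial on }D)$ fixes $\mathcal F$. More precisely, the kernel of the restriction map $\operatorname{Pic}(rC)\to\operatorname{Pic}(D)$ lies in the stabilizer, because $\mathcal F\otimes\mathcal L=\mathcal F\otimes\mathcal L|_D$. This kernel is a positive-dimensional unipotent group. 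Indeed, on Lie algebras it is the kernel of $H^1(\mathcal O_{rC})\to H^1(\mathcal O_D)$, whose dimension is at least $h^1(\mathcal O_{rC})-h^1(\mathcal O_D)$. This difference is positive: $D$ contains $C$ generically with some multiplicity $k<r$ along $C$, and $h^1(\mathcal O_{rC})>h^1(\mathcal O_{kC})$. To see this, use the filtration with graded pieces $\mathcal O_C(-jC)$ and the fact that $N_{C/S}^{\vee}\cong \omega_C^{-1}\otimes K_S|_C$ has degree $-(2g-2)<0$, so $H^1$ grows. Here $D$ may have embedded points, but those only change $H^0$, which is fine after a dimension count using $\chi$. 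This gives a nonzero tangent vector in the Lie algebra of the stabilizer, which contradicts the conclusion above. Hence $Z$ is non-reduced.

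The main obstacle is the infinitesimal step: showing rigorously that the derivative of the $P$-action at $\mathcal F$ agrees, up to sign, with the Hamiltonian map dual to $dh_{\mathcal F}$ at the singular curve $rC$. This requires the compatibility of the Mukai symplectic form with the Serre duality pairing on $\operatorname{Ext}^1(\mathcal F,\mathcal F)$. I would check it directly. The action derivative is the map $H^1(\mathcal O_{rC})\to\operatorname{Ext}^1(\mathcal F,\mathcal F)$ given by multiplication, and $dh$ is the trace-type map $\operatorname{Ext}^1(\mathcal F,\mathcal F)\to H^0(N_{rC})$. Serre duality on $S$ should then make these two maps transposes of one another.
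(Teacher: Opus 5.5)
Your proposal is correct and follows essentially the same route as the paper. In both, reducedness at a generic point forces the stabilizer in the Picard group space to have trivial Lie algebra, because the action derivative is dual to $dh$; irregularity, on the other hand, puts the positive-dimensional kernel of $\operatorname{Pic}(rC)\to\operatorname{Pic}(r'C)$ into that stabilizer.

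The one difference is the duality you flag as the main obstacle. The paper does not check it pointwise at $rC$ via Serre duality. Instead it verifies a Liouville structure, meaning $\operatorname{Pic}^\nu$, which preserves $\chi$, together with $\operatorname{Lie}\cong\Omega_U$. It then invokes de Cataldo--Rapagnetta--Sacc\`a: two bundle maps $h^\ast\Omega_U\to T_{M_U}$ that agree over the smooth locus agree everywhere.
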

This is established using an infinitesimal duality discovered by de Cataldo, Rapagnetta and Sacc\`a \cite{dCRS}.
We show that there is a smooth subgroup scheme $P\to U$ of the relative Picard scheme of the curve class that acts on $M_U\to U$.
Then the framework established in \cite{dCRS} shows that, for every $x\in M_U$, the derivative
\[dh_x:T_{M_U,x}\to T_{U,h(x)}\]
is dual to the derivative of the action
\[d\operatorname{act}_x:\operatorname{Lie}(P_{h(x)})\to T_{M_U,x}\]
under the isomorphism $T_{M_U}\cong T^\ast_{M_U}$ induced by a holomorphic symplectic form.

When $x$ is a generic point on an irregular component, we shall show that $d\operatorname{act}_x$ fails to be injective, which means that the component cannot be reduced.

Among the cases listed in \Cref{intro:mainthm}, the hypotheses of the theorem are satisfied for (CY1) and (CY2).
On the other hand, the surfaces in (F1) and (F2) are not symplectic.

Unexpectedly, however, irregular components never appear in the Fano cases.
Let us first discuss this in the case (F2) by studying a stratification of the global nilpotent cone for twisted Hitchin fibrations.

\subsection{GIT-stratification on (twisted) Hitchin fibrations}
Let $L$ be a line bundle on $C$ such that either $L\cong K_C$ or $\deg L>2g-2$.
Consider the classical description of $M$ as $L$-twisted Higgs bundles: a pair $(E,\phi)$ where $E$ is a vector bundle with rank $r$ and degree $d=\chi+r(g-1)$, and $\phi:E\to E\otimes L$.

Like before, we assume $\gcd(r,\chi)=1$.
In this case, $M$ is smooth and $h$ is a fibration.

There is an action of $\mathbb G_m$ on $M$ given by $\lambda\cdot (E,\phi)=(E,\lambda\phi)$.
The morphism $h$ is $\mathbb G_m$-equivariant for an appropriate $\mathbb G_m$-action on $B$ with a unique fixed point at $0\in B$.
In particular, $M^{\mathbb G_m}=Z_1\sqcup\cdots\sqcup Z_N\subset h^{-1}(0)$.

Under an appropriate choice of $\mathbb G_m$-linearized line bundle, the global nilpotent cone $h^{-1}(0)$ is precisely the unstable locus of this $\mathbb G_m$-action.
This means that the general construction of \cite{HL1} gives rise to locally closed subschemes $S_1,\cdots,S_N\subset h^{-1}(0)$ giving a stratification of $h^{-1}(0)$, with each $S_m$ an affine bundle over $Z_m$.

\begin{theorem}[=\Cref{prop:twistedstrata}]\label{intro:Ltwisted_greg}
    If $\deg L>2g-2$, then $\dim S_m=\dim h^{-1}(0)$ if and only if $Z_m$ contains a generically regular point.
    In particular, all components of $h^{-1}(0)$ are generically regular in this case.
\end{theorem}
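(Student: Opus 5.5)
The plan is to compute $\dim S_m$ directly by deformation theory at a fixed point and compare it with $\dim h^{-1}(0)$. Since $h$ is flat with fibers of dimension $\dim M-\dim B$, we have $\dim h^{-1}(0)=\dim M/2+\tfrac12(\dim M-2\dim B)$ in general; concretely, $\dim h^{-1}(0)=\dim M-\dim B$. By the Bia{\l}ynicki-Birula/HL description, $S_m$ is the attracting set of $Z_m$ inside the unstable locus, so $\dim S_m=\dim Z_m+\operatorname{rk}T^{+}_{M}|_{Z_m}$, where $T^+$ is the positive-weight part of the tangent space. The goal is therefore an inequality $\dim Z_m+\operatorname{rk}T^+\le \dim M-\dim B$, with equality exactly when $Z_m$ has a generically regular point.

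At a fixed point $(E,\phi)$ we have $E=\bigoplus_i E_i$ with $\phi:E_i\to E_{i+1}\otimes L$. The tangent space is $\mathbb H^1$ of the complex $\operatorname{End}E\xrightarrow{[\cdot,\phi]}\operatorname{End}E\otimes L$, which splits by weight $k$ into complexes $C_k:\bigoplus_i\operatorname{Hom}(E_i,E_{i+k})\to\bigoplus_i\operatorname{Hom}(E_i,E_{i+k+1})\otimes L$. Stability gives $\mathbb H^0=\mathbb C$ (in weight $0$). Because $\deg L>2g-2$, Serre duality together with stability yields $\mathbb H^2=0$; this is the step where the twisted case departs from the $K_C$ case. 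Hence $\dim\mathbb H^1(C_k)=-\chi(C_k)$ for $k\neq 0$. The dimension of $Z_m$ is given by the weight-$0$ part, and $\operatorname{rk}T^+=\sum_{k>0}-\chi(C_k)$. Each $\chi(C_k)$ is computed by Riemann--Roch: the ranks enter as $\sum_i r_ir_{i+k}(1-g)$ and $\sum_i r_ir_{i+k+1}(\deg L+1-g)$, together with degree terms. Summing over $k\ge 0$ and subtracting $\dim M-\dim B$ should express the defect as a sum of manifestly nonnegative terms, namely a positive multiple of $(\deg L-2g+2)$ times a quadratic expression in the ranks $r_i$. I expect this expression to vanish iff every $r_i\le 1$, i.e.\ iff the chain is of type $(1,\dots,1)$, and after cancellation the degree terms should contribute nothing.

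The remaining step is to identify the chains of type $(1,1,\dots,1)$ (all maps nonzero) with the fixed components containing generically regular points. Regularity of a nilpotent Higgs field at the generic point means it is a single Jordan block there, which for a graded fixed point forces all $r_i=1$. Conversely, type $(1,\dots,1)$ with nonzero maps gives a regular nilpotent generically. Finally, every component of $h^{-1}(0)$ is the closure of some $S_m$ of full dimension, and the points of that $S_m$ flow to $Z_m$. Since regularity is an open condition and is preserved under the $\mathbb G_m$-flow, the generic point of the component is generically regular.

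I expect the main obstacle to be the Riemann--Roch bookkeeping: organizing $\sum_{k\ge0}\chi(C_k)$ into a clean comparison with the full $\chi$ of $\operatorname{End}E\to\operatorname{End}E\otimes L$. Rather than working with $\sum_{k\ge0}$ directly, I would first use the symmetry between weight $k$ and weight $-k$, in the form $\dim M=\sum_k\dim\mathbb H^1(C_k)$. Rewriting $\dim T^+ + \dim T^0$ as $\dim M$ minus the negative-weight part should then reduce the comparison to a count of $\operatorname{Hom}(E_{i+1},E_i\otimes L)$-type terms. I also need to verify the vanishing $\mathbb H^2(C_k)=0$ carefully for negative $k$.
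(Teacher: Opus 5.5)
Your proposal is correct and follows the paper's proof. Both arguments split the deformation complex by weight, use that the bottom hypercohomology is $\mathbb C$ in weight $0$ and the top one vanishes for $\deg L>2g-2$, and sum Riemann--Roch over the pieces $\operatorname{Hom}(E_i,E_{i+k})$, $k\ge 0$ (the paper's $w\le 0$). This gives $\dim S_m=\tfrac l2 r^2-(\tfrac l2+1-g)\sum_i r_i^2+1$, so the degree terms cancel directly, with no weight symmetry needed (none holds in the twisted case), and $\dim h^{-1}(0)-\dim S_m=\tfrac{\deg L-2g+2}{2}\sum_i r_i(r_i-1)$, exactly as you predicted. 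You also supply details the paper leaves implicit: that regular graded fixed points are exactly the $(1,\dots,1)$-chains, and the open, $\mathbb G_m$-invariant argument for the generic point of each component.
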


This is shown by computing the weights of the tangent bundle of $M$ with respect to this $\mathbb G_m$-action.
In the case $L=K_C$, this theory also produces another proof that irregular components are non-reduced (\Cref{cor:hitchinirregnonred}).

\subsection{Generically regular components}
Before we show that the absence of irregular components also appears in (F1), we need to discuss where multiplicities come from in generically regular components.

Consider the Hilbert scheme of points $\operatorname{Hilb}_{rC}^n$.
Just like the global nilpotent cone, this space is highly non-reduced.
Let us set $n=\chi+g(rC)-1$ and, without loss of generality, increase $\chi$ arbitrarily by twisting all stable sheaves with the polarization.
Note that this does not change the fibration nor $(\chi\bmod r)$.

There exists an open
\[\mathring{\operatorname{Hilb}}_{rC}^n\subset\operatorname{Hilb}_{rC}^n\]
such that there is a smooth morphism $\mathring{\operatorname{Hilb}}_{rC}^n\to h^{-1}(rC)$ whose image is precisely the generically regular locus (\Cref{prop:paircover}).

Hence showing the non-reducedness of the generically regular components can be reduced to showing the non-reducedness of this Hilbert scheme.
We establish the latter using \cite{luan}.
\begin{theorem}[=\Cref{greg_nonred}]\label{intro:gregnonred}
    Suppose either $r>2$ or $r=2$ and $2\mid \deg_CN_{C/S}$, then generically regular components of $h^{-1}(rC)$ are non-reduced.
\end{theorem}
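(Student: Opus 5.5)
The plan is to reduce the statement to a non-reducedness assertion about the Hilbert scheme of points on the thickened curve $rC$, and then establish that assertion using the description of $\operatorname{Hilb}^n_{rC}$ from \cite{luan}.

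\emph{Step 1 (reduction to the Hilbert scheme).} Twist by the polarization so that $\chi$ is large; this changes neither the fibration nor $\chi\bmod r$. Set $n=\chi+g(rC)-1$. By \Cref{prop:paircover} there is a smooth morphism
\[\pi:\mathring{\operatorname{Hilb}}_{rC}^n\to h^{-1}(rC)\]
whose image is exactly the generically regular locus. Reducedness at a point is detected by the local rings. Under a smooth morphism, the local ring of the source is reduced if and only if that of the image point is reduced, since being reduced ascends and descends along smooth, hence flat with regular fibers, maps.

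Let $Z$ be a generically regular component of $h^{-1}(rC)$, and suppose its generic point is reduced. Then every component of $\mathring{\operatorname{Hilb}}^n_{rC}$ dominating $Z$ is generically reduced. So it suffices to show that every irreducible component of $\mathring{\operatorname{Hilb}}^n_{rC}$ is generically non-reduced. Here I would recall that the open $\mathring{\operatorname{Hilb}}$ is cut out by requiring the ideal sheaf $I_Z$ to be generically a line bundle on $rC$, i.e. $\mathcal O_{rC}$ twisted down at finitely many points.

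\emph{Step 2 (tangent spaces versus dimension).} At a point $[Z]\in\mathring{\operatorname{Hilb}}^n_{rC}$ the tangent space is $\operatorname{Hom}(I_Z,\mathcal O_Z)$. I would compare its dimension at a general point of each component with the dimension of that component, which is read off from the stratification of $\operatorname{Hilb}^n_{rC}$ in \cite{luan}. Components are indexed by how the $n$ points distribute among the thickening layers, i.e. by the filtration $C\subset 2C\subset\cdots\subset rC$. A generic point of a component can be taken to be a union of reduced-support subschemes sitting in different layers together with a locally free part.

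I would compute $\dim\operatorname{Hom}(I_Z,\mathcal O_Z)$ locally. At each point the curve is analytically $\{y^r=0\}$, and a local ideal is of the form $(y^k,\,x-a)+\dots$ in the standard curvilinear forms. The deformations of such ideals inside the plane $\operatorname{Hilb}$ of the surface restricted to $rC$ have extra first-order directions that do not integrate, for example moving the point off the layer $y^k$ in the $y$-direction. Summing over points shows $\dim T>\dim(\text{component})$ when $r>2$. The parity hypothesis enters through the global contribution $h^0$ or $h^1$ of $N_{C/S}^{\otimes j}$ restricted to the layers. When $r=2$ the excess is governed by $\chi(\mathcal O_{2C})$, equivalently $\deg N_{C/S}$, and the condition $2\mid\deg N_{C/S}$ forces the component to contain points in the nontrivial layer.

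\emph{Main obstacle.} The hard part is Step 2. It requires a full list of the irreducible components of $\mathring{\operatorname{Hilb}}^n_{rC}$ and an honest excess tangent dimension at a general point of each. This is especially delicate for $r=2$: when $\chi(\mathcal O_{2C})$ is odd, a component can consist entirely of line-bundle-type ideals on $2C$, where the tangent space has the expected dimension. I would first try to use \cite{luan} to pin down generic points of components as explicit local models. I would then show that only the excluded odd case yields a component with no point supported on a reduced-layer jump, and that every other component has a nonzero obstructed first-order deformation.
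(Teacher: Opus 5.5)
Your Step 1 is the paper's reduction: the smooth map $\mathring{\operatorname{Hilb}}^n_{rC}\to h^{-1}(rC)$ onto the generically regular locus, followed by the local model $y^r=0$. Comparing tangent spaces with dimensions is also a legitimate substitute for the multiplicity bound $\prod_i(r-m_i+1)$ from \cite{luan} that the paper quotes. The gap is in what that comparison actually gives.

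A general point of the component indexed by $n=m_1+\cdots+m_k$, with $m_i\le r$, is a disjoint union of curvilinear $Z_i$. After a coordinate change fixing $y$, each is locally $(x,y^{m_i})$. A tangent vector is $\phi(x)=a$, $\phi(y^{m_i})=b$ with $a,b\in\mathbb C[y]/(y^{m_i})$, subject to $\phi(y^r)=y^{r-m_i}b=0$. Hence $\dim\operatorname{Hom}(I_Z,\mathcal O_Z)-n=\sum_i\min(m_i,r-m_i)$. This is zero exactly on the component with all $m_i=r$, namely the subschemes that are Cartier in $rC$, corresponding to line bundles on $rC$. That component is generically smooth for every $r$. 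So your claim that summing over points gives excess whenever $r>2$ is false, and no local computation can rule that component out.

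What rules it out is arithmetic, and this is where the hypotheses enter for every $r$. They do not enter through cohomology of $N_{C/S}^{\otimes j}$ on the layers, nor only when $r=2$.

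\begin{itemize}
\item The partition $(r,\dots,r)$ occurs only if $r\mid n$.
\item By \Cref{lem:rCgenus}, $n=\chi+g_\beta-1$ with $g_\beta-1=\binom r2\deg N_{C/S}+r(g-1)\equiv\binom r2\deg N_{C/S}\pmod r$.
\item This residue is $0$ for $r$ odd and a multiple of $r/2$ for $r$ even. For $r=2$ it is even if and only if $\deg N_{C/S}$ is even.
\item So under the hypotheses, $r$ and $g_\beta-1$ share a prime factor $p$. Since $\gcd(r,\chi)=1$, $p\nmid \chi$, hence $p\nmid n$ and $r\nmid n$. Twisting by the polarization shifts $\chi$ by a multiple of $r$, so this coprimality survives.
\item Therefore every component has some $m_i<r$.
\end{itemize}

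Your $r=2$ discussion gestures at this, but you need the congruence in general. It is the entire content of the paper's proof of \Cref{greg_nonred} once the Hilbert-scheme reduction is made. With it added, your tangent-space count would finish the argument.
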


\subsection{Deformation to the normal cone}
We extend the conclusion of \Cref{intro:Ltwisted_greg} to the case (F1) of del Pezzo surfaces.
Suppose $(S,H)$ is a polarized del Pezzo surface and $\gcd(rC\cdot H,\chi)=1$.
\begin{theorem}[=\Cref{prop:delpezzogreg}]\label{intro:delpezzo}
    In this case, all components in the global nilpotent cone are generically regular.
    In particular, they are non-reduced under the hypotheses of \Cref{intro:gregnonred}.
\end{theorem}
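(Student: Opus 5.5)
We follow the title of the subsection: degenerate the del Pezzo surface $S$ to the normal cone of $C$, and reduce the statement to \Cref{intro:Ltwisted_greg} for the twisted Hitchin fibration with $L=N_{C/S}$.

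\textbf{Setting up the degeneration.} Since $S$ is del Pezzo, adjunction gives $\deg N_{C/S}=C^2=2g-2-K_S\cdot C>2g-2$, because $-K_S$ is ample. So the twisted Hitchin fibration for $L=N_{C/S}$ falls into case (F2).

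Let $\mathcal S\to\mathbb A^1$ be the deformation to the normal cone. Its general fiber is $S$. Its special fiber is $\operatorname{Bl}_C(S\times\mathbb A^1)$ restricted over $0$, which is the union $S\cup_C\mathbb P(N\oplus\mathcal O)$. Away from the component $S$ at infinity, the special fiber contains $\operatorname{Tot}_{N}$.

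Sheaves set-theoretically supported on $rC$ near the zero section therefore live in a neighbourhood that degenerates to $\operatorname{Tot}_N$. More concretely, the formal neighbourhoods $\widehat{S}_C$ and $\widehat{\operatorname{Tot}_N}_C$ are identified at every finite order by the family $\mathcal S$. We use the polarization pulled back from $H$. On the $\operatorname{Tot}_N$ side it has degree $r(C\cdot H)$ on $rC$, so the assumption $\gcd(rC\cdot H,\chi)=1$ makes stability equal semistability uniformly in the family. This should give a relative moduli space
\[\mathcal M\to\mathbb A^1\]
of stable sheaves with Fitting support $rC_t$, where $C_t$ is the proper transform of $C\times\mathbb A^1$. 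Its fiber over $t\neq0$ is $h^{-1}(rC)$, and its fiber over $0$ is the global nilpotent cone $h_N^{-1}(0)$ for $N$-twisted Higgs bundles. Both fibers are proper, using compactness of the support.

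\textbf{Dimension count via semicontinuity.} The irregular locus is closed in $\mathcal M$: it is the locus where the schematic support is strictly smaller than the Fitting support, which is a closed condition (the annihilator jumps). Hence
\[\dim(\text{irregular locus of } h^{-1}(rC))\le\dim(\text{irregular locus of } h_N^{-1}(0)).\]
By \Cref{intro:Ltwisted_greg}, the irregular locus of $h_N^{-1}(0)$ has dimension strictly less than $\dim h_N^{-1}(0)=\dim B_N$.

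On the other hand, $h$ is flat by \Cref{intro:mainthm}. So every component of $h^{-1}(rC)$ has dimension $\dim|\beta|$, which by Riemann–Roch equals $\dim M/2$. This matches $\dim h_N^{-1}(0)$, since both moduli spaces have dimension $\beta^2+1$. Therefore no component of $h^{-1}(rC)$ is irregular.

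The non-reducedness clause then follows directly from \Cref{intro:gregnonred}.

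\textbf{Main obstacle.} The hardest step is making the relative moduli space $\mathcal M\to\mathbb A^1$ rigorous. The difficulty is that sheaves on the special fiber could escape to the other component or to the divisor at infinity. The first thing to try is to show that any sheaf with Fitting support $rC_0$ is supported inside $\operatorname{Tot}_N$. This should follow because $rC_0$ is disjoint from the section at infinity and from the component $S$ of the special fiber.

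A second point is properness of $\mathcal M$ over $\mathbb A^1$, which we need so that semicontinuity applies. If properness fails, it suffices to use upper semicontinuity of fiber dimension on the closed irregular locus after taking its closure in $\mathcal M$.
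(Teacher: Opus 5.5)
Your strategy matches the paper's. You degenerate $S$ to $\operatorname{Tot}_N$ by deformation to the normal cone and note that the irregular locus is closed in the relative family of nilpotent cones. The paper gets this as the complement of the open image of the smooth map from $\mathring{\operatorname{Hilb}}^n$. Your direct argument also works, since a sheaf is irregular iff $\mathcal O(-(r-1)C)\otimes\mathcal F\to\mathcal F$ vanishes. You then spread \Cref{prop:twistedstrata} from $t=0$ to $t\neq0$ by upper semicontinuity of fiber dimension.

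\textbf{The dimension comparison is justified by a false identity.} This is the one step that needs real input. Neither surface is symplectic, so these support fibrations are not Lagrangian. Flatness makes every component of $h^{-1}(rC)$ have dimension $\dim M-\dim|\beta|$, which is neither $\dim|\beta|$ nor $\tfrac{1}{2}\dim M$. Likewise $\dim h_N^{-1}(0)\neq\dim B_N$. For a plane quartic with $r=2$: $\dim M=65$, $\dim|\beta|=\dim B_N=44$, and both nilpotent cones have dimension $21$.

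So the equality of total dimensions $\beta^2+1$, which is correct, does not give $\dim h^{-1}(rC)=\dim h_N^{-1}(0)$. You also need $\dim|\beta|_S=\dim B_N$, i.e.
\[
h^0(\mathcal O_S(rC))-1=\sum_{i=1}^r h^0(C,N^{\otimes i}).
\]
This is exactly the lemma the paper proves. One inducts on $r$ using $0\to\mathcal O_S(rC)\to\mathcal O_S((r+1)C)\to\mathcal O_C((r+1)N)\to0$, together with:
\begin{itemize}
\item $H^1(\mathcal O_S)=0$;
\item $H^2(\mathcal O_S(rC))=0$, since $-K_S$ is ample;
\item $H^1(C,N^{\otimes i})=0$, since $\deg N>2g-2$.
\end{itemize}
Alternatively, both relative dimensions equal $p_a(rC)=\binom{r}{2}\deg N+r(g-1)+1$ by \Cref{lem:rCgenus}. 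This holds because $N_{C/\operatorname{Tot}_N}\cong N_{C/S}$ and both generic fibers are Jacobians of smooth curves of that genus. Either repair makes $\mathcal M\to\mathbb A^1$ equidimensional, which is what your semicontinuity step actually uses.

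\textbf{Properness of $\mathcal M\to\mathbb A^1$ is genuinely needed, and your fallback does not supply it.} The irregular locus is already closed in $\mathcal M$, so taking its closure changes nothing. Semicontinuity needs $\{t:\dim(\text{irregular locus})_t\ge k\}$ to be closed in $\mathbb A^1$, i.e.\ images of closed sets must be closed. Without properness, irregular components over $t\neq0$ could escape to infinity as $t\to0$, and your inequality would fail. The paper gets properness from the relative support map $\mathcal M\to\mathcal B$, which is proper (standard, cf.\ \cite{dCMS22}). It then base-changes along the section $t\mapsto rC_t$ of $\mathcal B\to\mathbb A^1$.

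Your worry about sheaves escaping to the other component of the special fiber disappears if you do as the paper does. Take $\operatorname{Bl}_{C\times\{0\}}(S\times\mathbb A^1)$ (not $\operatorname{Bl}_C$) and remove the proper transform of $S\times\{0\}$. Then $\mathcal S_0=\operatorname{Tot}_N$ exactly.
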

To do this, we employ a technique using deformation to the normal cone.
Similar constructions have been considered in various cases in the literature \cite{DLL97,dCMS22,zhao2025}.

For the pair $C\hookrightarrow S$, the classical construction of deformation to the normal cone produces a flat family of surfaces
\[\mathcal S\to\mathbb A^1
\]
with $\mathcal S_t\cong S$ for $t\neq 0$ and $\mathcal S_0=\operatorname{Tot}_L$ where $L=N_{C/S}$.
Note that $\deg L>2g-2$.

This actually produces a degeneration from the support fibration for the del Pezzo surface to the $L$-twisted Hitchin fibration.
We show that this degeneration has the expected properties.

Although the multiplicities in a special fiber generally say almost nothing about the multiplicities in the general fiber, we do however have an extra piece of structure: the smooth map from the Hilbert scheme $\operatorname{Hilb}_{rC}^n$.

As this map is dominant in the global nilpotent cone of the special fiber, it must be dominant in the global nilpotent cone over a nonempty open subset of $\mathbb A^1$, so we establish \Cref{intro:delpezzo}.

\subsection{Primitivity of the general fiber}
For any fibration $M\to B$ between smooth projective varieties with relative dimension $d$, one can ask whether a general fiber is primitive, \emph{i.e.}~whether its class in $H_{2d}(M;\mathbb Z)$ is indivisible.
This contains significant topological information:
Indeed, it is a necessary condition for the existence of a $C^\infty$-section.

Using the nowhere-reducedness of the global nilpotent cone, we show:
\begin{theorem}[=\Cref{prop:primfibers}]
    Suppose $S$ is a K3 surface, then the Beauville-Mukai system
    \[M_S(r[C],\chi)\to |r[C]|\]
    with $[C]$ primitive and $\gcd(r,\chi)=1$ has primitive general fibers if and only if $r=1$.
\end{theorem}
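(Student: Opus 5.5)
We prove the two directions separately. If $r=1$, then $\beta=[C]$ is primitive, and the general fiber of $h$ is the compactified Jacobian of a smooth curve. The relative Picard scheme then acts on $M$, and for $\chi=1-g+\dim$ shifts one can use $\operatorname{Pic}$ of degree such that $h$ admits a section. More directly: by the Beauville–Mukai / Abel–Jacobi construction, $M_S([C],\chi)\cong M_S([C],\chi')$ via twisting by $H$ along fibers, so we may take $\chi=1-g$. There the structure sheaves $\mathcal O_D$ of the curves give a section $|\beta|\to M$. A section $\sigma$ meets the general fiber $F$ in exactly one point transversally, so $\sigma\cdot[F]=1$, and $[F]$ is indivisible.

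For $r\ge 2$, the key input is \Cref{intro:mainthm} in case (CY1). We have $\gcd(r,\chi)=1$, $[C]$ is primitive, and for $r=2$ the parity condition is automatic. So the global nilpotent cone $h^{-1}(rC)$ is nowhere reduced. Since $h$ is a proper flat morphism between smooth varieties, all fibers are homologous. Write the fundamental cycle of the special fiber as
\[[h^{-1}(rC)]=\sum_i m_i[Y_i]\in H_{2d}(M;\mathbb Z),\]
where the $Y_i$ are its irreducible components (all of dimension $d$, by flatness) and the $m_i$ are their multiplicities. Nowhere reducedness gives $m_i\ge 2$ for every $i$. This alone does not force divisibility, since the $m_i$ could be coprime.

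To obtain divisibility I would instead pair with cohomology. It suffices to show that every class $\alpha\in H^{2d}(M;\mathbb Z)$ satisfies $\alpha\cdot[F]\equiv 0$ modulo some fixed $m\ge 2$. I expect the argument to give $m=r$, or at least a common prime divisor of the $m_i$. The plan is to identify the $m_i$ with the length of the Hilbert scheme $\operatorname{Hilb}^n_{rC}$ at its generic points, via the smooth cover $\mathring{\operatorname{Hilb}}^n_{rC}\to h^{-1}(rC)$ of \Cref{prop:paircover}. It would also be necessary to handle the irregular components, using the infinitesimal duality of \Cref{symp:irreg_nonred}. Computing these multiplicities modulo $r$, or showing that they share a common factor, is the main obstacle.

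An alternative I would try first is to avoid exact multiplicities. Instead I would show that $h^{-1}(rC)$ is the scheme-theoretic fiber and that, Zariski locally near generic points, the fiber equation $t$ restricted along a transversal curve through $rC$ is a $p$-th power for a prime $p\mid r$. That would give a uniform $p\mid m_i$ and hence $[F]=p\sum_i (m_i/p)[Y_i]$, completing the proof.

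Finally, if only $m_i\ge 2$ is available, a fallback uses the primitivity criterion via sections. A class $\gamma\in H^{2d}$ with $\gamma\cdot[F]=1$ would, by hyper-Kähler Lagrangian fibration theory (for instance, Hodge-theoretic arguments on $H^{2}$, whose powers generate via Verbitsky's result), be representable so that its pairing with $[F]$ distributes over components. One would then use that each component class pairs integrally, which forces $\gamma\cdot[F]=\sum_i m_i(\gamma\cdot Y_i)$. Concluding from this requires the common-factor statement above, so that step remains the crux.
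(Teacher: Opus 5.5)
There are genuine gaps in both directions.

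\textbf{The case $r=1$.} Your reduction to $\chi=1-g$ does not work for arbitrary $\chi$. Twisting by a line bundle $A$ on $S$ changes $\chi$ by $A\cdot[C]$. So you can only shift $\chi$ by multiples of $\gcd\{A\cdot[C] : A\in\operatorname{Pic}(S)\}$. On a K3 with $\operatorname{Pic}(S)=\mathbb Z[C]$ this gcd is $2g-2$. For example, with $g=2$ and $\chi=0$ you can never reach $\chi=1-g=-1$, and $D\mapsto\mathcal O_D$ then produces nothing. (Even when $\chi\equiv 1-g$, the sheaf $\mathcal O_D$ is only guaranteed to be stable for integral $D$, so you get a rational section. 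That is harmless after taking the closure, but you should say it.)

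The paper avoids this by deforming $(S,[C])$ inside the connected moduli of quasi-polarized K3 surfaces to an elliptic K3 $S_0$ with section $s$ and $\ell_0=s+gf$. There $f\cdot\ell_0=1$, and Yoshioka's relative Fourier--Mukai transform identifies $M_{S_0}(\ell_0,\chi)\to|\ell_0|$ with $S_0^{[g]}\to(\mathbb P^1)^{(g)}$ for every $\chi$. This fibration visibly has a section. Primitivity of the general fiber class is then transported back to $(S,[C])$. Over a contractible open $U$ of the moduli, $H_{2g}$ of each member is identified with $H_{2g}(\mathcal M_U)$, and all general fibers of $\mathcal M_U\to U\times\mathbb P^g$ are homologous.

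\textbf{The case $r\ge 2$.} You correctly see that nowhere-reducedness only gives $m_i\ge 2$, and that you need a common factor of the $m_i$. But you do not supply one, and the routes you sketch are not viable as stated. The Hilbert-scheme bound from \cite{luan} is only a lower bound and says nothing about irregular components. The $p$-th power claim is unsupported.

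The missing ingredient is the theorem of \cite{CKV,lu_preprint_01}: in a generically abelian fibration, if the multiplicities of the components of a fiber have $\gcd$ equal to $1$, then that fiber has a reduced component. Here the general fiber is the Jacobian of a smooth curve, so this applies. Its contrapositive, applied to the nowhere-reduced fiber $h^{-1}(rC)$, gives $\gcd(m_i)\ge 2$. Hence $[h^{-1}(rC)]=\sum_i m_i[Y_i]$ is divisible. Since $|r[C]|$ is a projective space, every fiber class equals $h^\ast$ of the point class, so the general fiber is not primitive. No exact computation of multiplicities is needed.
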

The statement that a general fiber is not primitive if $r>1$ is true more generally, namely for situations (CY1) and (F1) in \Cref{intro:mainthm} under the respective hypotheses.

\subsection*{Acknowledgement}
The authors would like to thank Junliang Shen for fostering this collaboration, and Siqing Zhang for fruitful discussions.
The authors are also grateful to Mirko Mauri for reading the first version of the preprint and suggesting a more suitable definition of Liouville structures.

D.Z.B.~would like to thank Weite Pi for helpful conversations regarding moduli of sheaves on del Pezzo surfaces.
D.F.~would like to thank Yuze Luan for communications about \cite{luan}.

D.Z.B.~was partially supported by NSF grant DMS-2301474.

\section{Picard actions on moduli spaces of sheaves}
\subsection{Group schemes and fibrations}
For our purposes, a fibration is a proper flat morphism $h:X\to B$ with connected fibers between smooth varieties.

\begin{definition}
    Let $h:X\to B$ be a fibration.
    Suppose $X$ is equipped with a holomorphic symplectic form $\sigma$, then the fibration $h:X\to B$ is called a Lagrangian fibration if the generic fiber $X_\eta$ is Lagrangian, \emph{i.e.}~$\sigma|_{X_\eta}=0$ and $2\dim X_\eta=\dim X$.
\end{definition}
This definition requires a choice of a symplectic form, which we will keep implicit.

By \cite{beauville91}, Lagrangian fibrations are generically abelian, \emph{i.e.}~the generic fiber is an abelian variety.
In particular, over a Zariski open $B^\circ\subset B$, the base-change $h^\circ:X^\circ\to B^\circ$ is a torsor over the abelian scheme that is $P^\circ=\operatorname{Aut}_{X^\circ/B^\circ}^0\to B^\circ$.

Over this open $B^\circ$, the symplectic form gives an identification $\operatorname{Lie}(P^\circ/B^\circ)\cong\Omega_{B^\circ}$ through the group action.
Let us consider the extension of this to the entire base $B$.
This is related to the notion of weakly abelian fibrations studied by Ng\^o \cite{ngo}.

\begin{definition}
    A Liouville structure for a Lagrangian fibration $h:X\to B$ is a $B$-group space $P\to B$ acting on $X\to B$ such that:
    \begin{enumerate}
        \item $P$ is quasiprojective and smooth over $B$,
        \item $P|_{B^\circ}=P^\circ$, and
        \item there is an isomorphism $\operatorname{Lie}(P/B)\cong\Omega_B$ extending $\operatorname{Lie}(P^\circ/B^\circ)\cong\Omega_{B^\circ}$.
    \end{enumerate}
\end{definition}
In particular, the group space $P\to B$ must be commutative.
\begin{remark}
    If $P$ is a subgroup space of $\operatorname{Aut}_{X/B}$, then the last property is automatic by \cite[Theorem 4.8]{kim_neron}.
\end{remark}

In \cite{dCRS}, it was observed that if a Liouville structure exists on a Lagrangian fibration, then the infinitesimal action of the group space is dual to the derivative of $h$.
More precisely:
\begin{theorem}[{\cite[Lemma 2.3.1]{dCRS}}]
    Suppose $P\to B$ is a Liouville structure for a Lagrangian fibration $h:X\to B$, then
    \begin{enumerate}
        \item the isomorphism $T_X\cong\Omega_X$ induced by the symplectic form $\sigma$ restricts to an isomorphism $T_{X/B}\cong h^\ast \Omega_B$;
        \item the composite
        \[\begin{tikzcd}
            h^\ast\operatorname{Lie}(P/B)\rar{\sim}&h^\ast\Omega_B\rar{\sim}&T_{X/B}\rar& T_X
        \end{tikzcd}\]
        is the derivative of the action.
    \end{enumerate}
    In particular, at each $x\in X$, the derivative $dh:T_{X,x}\to T_{B,h(x)}$ of $h$ is dual to the derivative $d\operatorname{act}:\operatorname{Lie}(P_{h(x)})\to T_{X,x}$ of $P_{h(x)}\to P_{h(x)}\cdot x\to X$.
\end{theorem}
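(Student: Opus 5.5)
The plan is to first establish the statement over the dense open $B^\circ$ where everything is explicit, and then extend to all of $X$ by a closedness and density argument. Both sides of the claimed identities are maps between vector bundles on $X$, so it suffices to check equality on the dense open $X^\circ=h^{-1}(B^\circ)$.

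\textbf{Part (1).} Since $h$ is flat with Lagrangian generic fibre, $T_{X/B}=\ker(dh:T_X\to h^\ast T_B)$ is a subsheaf of rank $n=\dim B$. Via $\sigma$, the subbundle $h^\ast\Omega_B\hookrightarrow\Omega_X$ corresponds to the vector fields $v$ with $\iota_v\sigma\in h^\ast\Omega_B$. Over $X^\circ$, where $h$ is smooth with Lagrangian fibres, the symplectic orthogonal of $T_{X/B}$ is $T_{X/B}$ itself. Hence the annihilator of $T_{X/B}$ in $\Omega_X$, which is $h^\ast\Omega_B$, equals $\sigma(T_{X/B})$ there.

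To extend to all of $X$, I would argue on the infinitesimal action instead. The action map $a:h^\ast\operatorname{Lie}(P/B)\to T_X$ lands in $T_{X/B}$, because $P$ acts fibrewise. The contraction $\iota_{a(\xi)}\sigma$, for $\xi$ a section of $\operatorname{Lie}(P/B)$, is a $1$-form on $X$. On $X^\circ$ it equals $h^\ast(\phi(\xi))$, where $\phi:\operatorname{Lie}(P/B)\cong\Omega_B$ is the given isomorphism; this is the defining compatibility of $P^\circ$ with $\sigma$.

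Both $\iota_{a(\xi)}\sigma$ and $h^\ast\phi(\xi)$ are sections of the locally free sheaf $\Omega_X$ on the integral scheme $X$. Since they agree on a dense open, they agree everywhere. Therefore
\[\sigma\circ a=h^\ast\phi:\ h^\ast\operatorname{Lie}(P/B)\to\Omega_X.\]
Because $h^\ast\phi$ is an isomorphism onto $h^\ast\Omega_B$ and $\sigma:T_X\to\Omega_X$ is an isomorphism, $a$ is injective with image $\sigma^{-1}(h^\ast\Omega_B)$. This image lies in $T_{X/B}$, so $\sigma^{-1}(h^\ast\Omega_B)\subseteq T_{X/B}$.

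For the reverse inclusion, take $v\in T_{X/B}$ and $w=a(\xi)$. Then
\[\sigma(v,w)=-\langle h^\ast\phi(\xi),v\rangle=-\langle\phi(\xi),dh(v)\rangle=0.\]
So $\iota_v\sigma$ kills the image of $a$, which has rank $n$ pointwise. Dually, $\iota_v\sigma$ must lie in the annihilator of $\sigma^{-1}(h^\ast\Omega_B)$. I expect this step to be the main obstacle: it requires checking pointwise at singular points of $h$ that the annihilator of $\sigma^{-1}(h^\ast\Omega_B)$ is $\sigma$ of the annihilator of $h^\ast\Omega_B$, namely $\sigma(T_{X/B})$ at the level of points.

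The cleanest route is to avoid pointwise ranks altogether. Both $T_{X/B}$ and $\sigma^{-1}(h^\ast\Omega_B)$ are saturated in $T_X$: the first is a kernel, and the second is the preimage of the saturated subsheaf $h^\ast\Omega_B$, which is saturated because $dh$ is generically surjective. Both have rank $n$, and one contains the other. Hence they are equal.

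\textbf{Part (2).} This is the identity $\sigma\circ a=h^\ast\phi$ proved above, rewritten as $a=\sigma^{-1}\circ h^\ast\phi$.

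\textbf{The pointwise statement.} At a point $x$, restrict the identity to fibres. For $\xi\in\operatorname{Lie}(P_{h(x)})$ and $v\in T_{X,x}$,
\[\sigma\bigl(d\operatorname{act}(\xi),v\bigr)=\pm\langle\phi(\xi),dh_x(v)\rangle.\]
Since $\sigma_x$ is a perfect pairing and $\phi$ is an isomorphism, $dh_x$ is the transpose of $d\operatorname{act}_x$ under these identifications.
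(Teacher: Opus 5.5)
The gap is in the reverse inclusion $T_{X/B}\subseteq\sigma^{-1}(h^\ast\Omega_B)$ in part (1). You claim $h^\ast\Omega_B\subset\Omega_X$ is saturated because $dh$ is generically surjective. Generic surjectivity only makes $h^\ast\Omega_B\to\Omega_X$ injective; it is a subsheaf, not a subbundle. The quotient $\Omega_{X/B}$ has torsion along any divisor contained in the non-smooth locus of $h$, for instance along a multiple fibre component lying over a divisor of $B$.

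Concretely, take $h(x,y)=x^2$ and $\sigma=dx\wedge dy$. Then $h^\ast\Omega_B=\mathcal O\cdot x\,dx$ and $T_{X/B}=\ker dh=\mathcal O\cdot\partial_y$. But $\sigma^{-1}(h^\ast\Omega_B)=\mathcal O\cdot x\,\partial_y\subsetneq T_{X/B}$.

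This is exactly the local picture along the doubled component of an $I_0^\ast$ fibre of an elliptic K3 surface with a section. Such a surface carries a Liouville structure, namely the identity component of the N\'eron model acting on $X$. Since (1) involves only $h$ and $\sigma$, it is false in this generality when read literally with $T_{X/B}=\ker dh$, and your step cannot be repaired. What does hold is that $\sigma(T_{X/B})$ is the saturation of $h^\ast\Omega_B$. Equality holds precisely where $\Omega_{X/B}$ is torsion-free: on the smooth locus of $h$, or everywhere if the non-smooth locus of $h$ has codimension at least $2$ in $X$.

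The rest of your argument is sound and does not depend on (1). Your density argument gives $\sigma\circ a=h^\ast\phi$ as maps $h^\ast\operatorname{Lie}(P/B)\to\Omega_X$ on the integral scheme $X$, and this is correct. It yields (2) directly, because the composite there equals $\sigma^{-1}\circ h^\ast\phi$ whether the middle arrow is an isomorphism onto $T_{X/B}$ or merely an injection into it. Restricting to the fibre at $x$ then gives the pointwise duality between $dh_x$ and $d\operatorname{act}_x$, which is the only part used later, in \Cref{symp:nrd_stab}.

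The paper gives no proof of this statement and simply cites \cite{dCRS}. So state (1) in the form you actually proved: the image of the infinitesimal action is $\sigma^{-1}(h^\ast\Omega_B)\subseteq T_{X/B}$, with equality on the smooth locus of $h$. Do not rely on the saturation claim.
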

\begin{corollary}\label{symp:nrd_stab}
    For any $t\in B$, let $F$ be an irreducible component of the fiber $X_t$.
    Suppose $F$ is reduced, then for a generic $x\in F$, the stabilizer $\operatorname{Stab}_{P_t}(x)$ has dimension $0$. 
\end{corollary}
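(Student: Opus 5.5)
The plan is to compare the rank of $dh_x$ with the dimension of the stabilizer, using the duality of \cite[Lemma 2.3.1]{dCRS} stated just above, and to extract the contradiction from reducedness of $F$.

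First, suppose $F$ is a reduced irreducible component of $X_t$. Since $X_t$ is reduced at the generic point of $F$, there is a dense open $F^\circ\subset F$ along which $X_t$ is smooth of dimension $\dim F$. As $h$ is flat with all fibers of dimension $n=\dim X-\dim B$, we get $\dim F=n$.

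Next, fix $x\in F^\circ$. Smoothness gives
\[T_{X_t,x}=\ker(dh_x)\]
of dimension $n$. Hence $dh_x:T_{X,x}\to T_{B,t}$ is surjective, since its image has dimension $\dim X-n=\dim B$.

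By the duality statement, $d\operatorname{act}_x:\operatorname{Lie}(P_t)\to T_{X,x}$ is, after the identifications $T_X\cong\Omega_X$ and $\operatorname{Lie}(P/B)\cong\Omega_B$, the transpose of $dh_x$. It is therefore injective.

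Now consider the orbit map $a_x:P_t\to X_t$, $g\mapsto g\cdot x$. The scheme-theoretic stabilizer $\operatorname{Stab}_{P_t}(x)$ is the fiber $a_x^{-1}(x)$. Its tangent space at the identity is $\ker(d\operatorname{act}_x)$, which is $0$. A group scheme of finite type over a field with zero tangent space at the identity is étale, hence of dimension $0$. This proves the claim for every $x$ in the dense open $F^\circ$, so in particular for generic $x\in F$.

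The step needing the most care is the tangent-space identification. Here $d\operatorname{act}_x$ is the differential of $P_t\to X$ at the identity, and its kernel is the Lie algebra of the stabilizer. This holds because the stabilizer is the fiber product $P_t\times_{X}\{x\}$ via the orbit map, so its tangent space at $e$ is $\ker(d a_x)_e$. If one prefers to avoid scheme-theoretic subtleties, argue instead with dimensions. The stabilizer $\Sigma$ is a closed subgroup, so $\dim\Sigma\le\dim T_e\Sigma=0$.

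Reducedness of $F$ is used only to get smoothness of $X_t$ at generic points of $F$. We may also need $P$ of finite type over $B$ (it is quasiprojective) so that $P_t$ is a finite type group scheme.
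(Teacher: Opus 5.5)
Your proposal is correct and follows essentially the same route as the paper. Reducedness gives smoothness of $X_t$ at a generic $x\in F$, hence surjectivity of $dh_x$; the duality of \cite[Lemma 2.3.1]{dCRS} then gives injectivity of $d\operatorname{act}_x$, and the Lie algebra of $\operatorname{Stab}_{P_t}(x)$ lies in its kernel. Your explicit dimension count via flatness simply spells out the paper's step that $h$ is smooth at $x$.
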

\begin{proof}
    If $F$ is reduced, then a generic $x\in F$ is a smooth point of $X_t$, and therefore $h:X\to B$ is smooth at $x$.
    Hence $dh$ is surjective at $x$.
    By the preceding theorem, this is equivalent to the derivative of $P_t\to P_t\cdot x\to X$ being injective.
    As $\operatorname{Lie}(\operatorname{Stab}_{P_t}(x))$ is in its kernel, we must have $\dim\operatorname{Stab}_{P_t}(x)=0$.
\end{proof}
\subsection{Moduli of sheaves on surfaces}\label{ss:moduliofsheaves}
In this section we review facts about moduli of sheaves on surfaces.
Some examples of this will give us examples of Lagrangian fibrations, where the previous discussions apply.

Suppose $S$ is a smooth quasiprojective surface, and $\beta\in H_2(S;\mathbb Z)$ a curve class.
Denote by $|\beta|$ the Hilbert scheme of proper curves in $S$ with class $\beta$, and $\mathcal C_\beta\to|\beta|$ the associated curve family.

Let $M_S(\beta,\chi)$ be the moduli space of Gieseker-stable sheaves $\mathcal F$ on $S$ with proper support and
\[[\operatorname{supp}\mathcal F]=\beta,\chi(\mathcal F)=\chi,\]
where $\operatorname{supp}\mathcal F$ is the Fitting support of $\mathcal F$.

A priori, this space depends on an ample polarization.
We omit this specification in situations where a generic polarization is used.

There is a support map $h=h_{S,\beta,\chi}:M_S(\beta,\chi)\to |\beta|$ sending such a sheaf to its Fitting support.
\begin{definition}
    A family $f:\mathcal C\to B$ is simple if for every $B$-scheme $T$, the map $\mathcal O_T\to (f_T)_\ast\mathcal O_{\mathcal C_T}$ is an isomorphism.
\end{definition}
\begin{theorem}[{\cite[\href{https://stacks.math.columbia.edu/tag/0D2C}{Tag 0D2C}]{stacks-project}}]
    For a simple family, the Picard functor $\operatorname{Pic}_{\mathcal C/B}$ is representable by an algebraic space.
\end{theorem}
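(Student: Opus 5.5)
The statement is the Stacks Project result that the Picard functor of a simple family is representable by an algebraic space. Throughout I assume the standing hypotheses of that tag: $f:\mathcal C\to B$ is flat, proper and of finite presentation. The plan is Artin's criterion, with the hypothesis $\mathcal O_T\cong (f_T)_\ast\mathcal O_{\mathcal C_T}$ used at one key point.

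\textbf{Step 1: reduce to a quotient by a free action.} Since $\operatorname{Pic}_{\mathcal C/B}$ is the fppf sheafification of $T\mapsto\operatorname{Pic}(\mathcal C_T)$, it suffices to exhibit it as an fppf quotient $\mathcal U/R$. Here $\mathcal U$ is an algebraic space and $R\rightrightarrows\mathcal U$ is a flat, finitely presented equivalence relation; Artin's theorem then gives representability. The natural choice is to rigidify. Working fppf-locally on $B$, I would pick quasi-sections, i.e.\ a finite flat $B'\to B$ with a $B$-morphism $B'\to\mathcal C$. Then I would study the functor $\mathcal P$ of line bundles $\mathcal L$ on $\mathcal C_T$ together with a trivialization $\mathcal L|_{B'_T}\cong\mathcal O_{B'_T}$.

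\textbf{Step 2: rigidified objects have no automorphisms.} An automorphism of a pair $(\mathcal L,\text{trivialization})$ is a unit $u\in\Gamma(\mathcal C_T,\mathcal O^\times)$ that restricts to $1$ on $B'_T$. By simplicity $\Gamma(\mathcal C_T,\mathcal O)=\Gamma(T,\mathcal O_T)$, so $u$ comes from $\mathcal O_T^\times$. Since $B'_T\to T$ is finite flat and surjective, $\mathcal O_T\to\mathcal O_{B'_T}$ is injective, hence $u=1$. This is exactly where simplicity enters, and it must hold for every base change, which is why the definition is phrased universally in $T$.

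\textbf{Step 3: $\mathcal P$ is an algebraic space.} The functor $\mathcal P$ is a sheaf: by Step 2, gluing data descend uniquely. It is the rigidified open substack of the algebraic stack of line bundles on $\mathcal C/B$ (Artin), so it is an algebraic space. The group $\operatorname{Res}_{B'/B}\mathbb G_m$ acts on $\mathcal P$ by changing the trivialization, and $\mathbb G_m$ acts through the diagonal. The quotient $\mathcal P/(\operatorname{Res}_{B'/B}\mathbb G_m/\mathbb G_m)$ recovers $\operatorname{Pic}_{\mathcal C/B}$ fppf-locally. The action is free by the injectivity argument of Step 2, and the group is smooth affine. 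Hence the quotient is an algebraic space, and descending along $B'$-localization yields representability.

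\textbf{Main obstacle.} The hardest step is to verify the algebraicity inputs to Artin's criterion in this generality (non-projective, possibly non-reduced fibers), in particular effectivity of formal objects via Grothendieck existence and the coherence of the deformation/obstruction theory $H^i(\mathcal C_t,\mathcal O)$. Rather than re-prove these, I would cite the algebraicity of the stack of line bundles from the Stacks Project and reduce everything to Step 2.
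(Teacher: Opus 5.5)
Your argument is correct, but it takes a different route from the one behind the citation. The paper gives no proof of its own; it defers to Stacks Tag 0D2C, whose argument stays on the Picard stack. There, $\mathcal{P}ic_{\mathcal C/B}$ is algebraic. Simplicity gives that the automorphism sheaf of any line bundle on $\mathcal C_T$ is $T'\mapsto\Gamma(\mathcal C_{T'},\mathcal O^\times)=\mathcal O(T')^\times$, so the inertia of $\mathcal{P}ic_{\mathcal C/B}$ is $\mathbb G_m\times\mathcal{P}ic_{\mathcal C/B}$, which is flat and of finite presentation. The general theorem that an algebraic stack with flat, locally finitely presented inertia is a gerbe over the algebraic space obtained by sheafifying isomorphism classes then identifies that space with $\operatorname{Pic}_{\mathcal C/B}$.

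You instead remove the $\mathbb G_m$ by hand. You rigidify along a quasi-section, in the style of the rigidified Picard functors of Raynaud and Bosch--L\"utkebohmert--Raynaud, and present $\operatorname{Pic}_{\mathcal C/B}$ locally on $B$ as the quotient of the rigidified space $\mathcal P$ by a free action of the smooth group $\operatorname{Res}_{B'/B}\mathbb G_m/\mathbb G_m$. The gerbe route is shorter, since it needs no quasi-sections and no localization on $B$, and it also shows that $\mathcal{P}ic_{\mathcal C/B}\to\operatorname{Pic}_{\mathcal C/B}$ is a $\mathbb G_m$-gerbe. Your route is more explicit: it gives a concrete presentation of $\operatorname{Pic}$ and shows $\mathcal P\to\operatorname{Pic}$ is a torsor. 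The cost is three extra inputs: surjective finite flat quasi-sections \'etale locally on $B$, Artin's theorem on quotients by flat finitely presented equivalence relations, and fppf-locality on $B$ of being an algebraic space. Both arguments rest on algebraicity of the Picard stack, so your ``main obstacle'' is shared with the cited proof. The gerbe theorem itself is proved by killing stabilizers and then applying Artin's quotient theorem, so the two are close in spirit.

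A few statements should be tightened.
\begin{itemize}
\item $\mathcal P$ is not an open substack of $\mathcal{P}ic_{\mathcal C/B}$. It is the fibre product $\mathcal{P}ic_{\mathcal C/B}\times_{\mathcal{P}ic_{B'/B}}B$ along the trivial bundle, i.e.\ a $\operatorname{Res}_{B'/B}\mathbb G_m$-torsor over the Picard stack. It is therefore algebraic, and an algebraic stack with trivial inertia is an algebraic space.
\item Freeness of the $\operatorname{Res}_{B'/B}\mathbb G_m/\mathbb G_m$-action uses the surjectivity half of simplicity, not only the injectivity $\mathcal O_T\hookrightarrow\mathcal O_{B'_T}$. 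An isomorphism $(\mathcal L,u\alpha)\cong(\mathcal L,\alpha)$ is a unit coming from $\mathcal O(T)$, which forces $u$ into the diagonal $\mathbb G_m$.
\item Surjectivity of $B'\to B$, used in your Step 2, is available \'etale locally because simplicity forces the fibres of $\mathcal C\to B$ to be nonempty.
\item $\mathbb G_m\to\operatorname{Res}_{B'/B}\mathbb G_m$ is a closed immersion because $\mathcal O_B\to\pi_\ast\mathcal O_{B'}$ is locally split. Hence the quotient group is flat and of finite presentation, as Artin's theorem requires.
\end{itemize}
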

Suppose $U\subset |\beta|$ is open, and $\mathcal C_U\to U$ is a simple family.
Denote by $M_U\to U$ the base-change of $M_S(\beta,\chi)\to|\beta|$ to $U$.

Consider the open subfunctor
\[\operatorname{Pic}_{\mathcal C_U/U}^\nu\subset\operatorname{Pic}_{\mathcal C_U/U}\]
parameterizing line bundles on fibers of $\mathcal C_U/U$ whose restriction to each of the irreducible components has degree zero.
\begin{lemma}\label{lem:picnuaction}
    The tensor product gives an action of the $U$-group space $\operatorname{Pic}^\nu_{\mathcal C_U/U}\to U$ on $M_U\to U$.

    If $C$ is a smooth member of the family, then it is connected.
    The fiber of $M_U\to U$ at $C$ is precisely $\operatorname{Pic}^{\chi+g(C)-1}(C)$.
    And the action is the action of $\operatorname{Pic}^0(C)$ on it.
\end{lemma}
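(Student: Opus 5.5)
The plan is to treat the three assertions of \Cref{lem:picnuaction} one at a time, working functorially over an arbitrary $U$-scheme $T$.

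\textbf{The action.} Let $T$ be a $U$-scheme, $\mathcal F$ a $T$-flat family of stable sheaves in $M_U(T)$, and $\mathcal L$ a line bundle on $\mathcal C_T$ lying in $\operatorname{Pic}^\nu$. The map $\mathcal F\mapsto\mathcal F\otimes\mathcal L$ is defined as follows. By definition of the Fitting support, $\mathcal F$ is a module over $\mathcal O_{\mathcal C_T}$, so the tensor product makes sense. It is again $T$-flat, since locally it is isomorphic to $\mathcal F$.

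Next I check that the invariants are preserved, fiber by fiber. Over a point $t$, the Fitting support is unchanged because Fitting ideals are local and $\mathcal L$ is locally trivial. The Euler characteristic is also unchanged: $\chi(\mathcal F_t\otimes\mathcal L_t)-\chi(\mathcal F_t)$ equals a sum over irreducible components of the support of multiplicities times $\deg(\mathcal L_t|_{\text{component}})$, which is zero. One way to see this is a filtration of $\mathcal F_t$ by sheaves on the reduced components, or Riemann--Roch on the one-dimensional scheme.

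For stability, the issue is the Gieseker slopes of subsheaves $\mathcal G\subset\mathcal F_t$. Tensoring with $\mathcal L_t$ gives a bijection $\mathcal G\mapsto\mathcal G\otimes\mathcal L_t$ on subsheaves. For any pure one-dimensional $\mathcal G$ supported on the support curve, the same multidegree computation gives $\chi(\mathcal G\otimes\mathcal L_t)=\chi(\mathcal G)$, and $\mathcal G\cdot H$ is unchanged. So reduced Hilbert polynomials are preserved, and hence so is (semi)stability.

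Because everything is compatible with base change, this defines a morphism $\operatorname{Pic}^\nu\times_U M_U\to M_U$ over $U$. Strictly, one should first define it on moduli functors and then descend; with $\mathbb G_m$-gerbe issues the action is on the coarse space. The group-action axioms follow from associativity of $\otimes$ and from $\mathcal O\otimes\mathcal F\cong\mathcal F$.

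\textbf{The smooth fiber.} If $C$ is a smooth member, then $h^0(\mathcal O_C)=1$ by simplicity of the family, so $C$ is connected. A stable sheaf with Fitting support exactly the reduced smooth integral curve $C$ is a torsion-free rank-one sheaf on $C$. Such a sheaf is a line bundle $\mathcal F$, and $\chi(\mathcal F)=\deg\mathcal F+1-g(C)$ gives $\deg\mathcal F=\chi+g(C)-1$. Conversely, every such line bundle is stable, since it has no proper saturated subsheaves of the same dimension with different support. The identification holds scheme-theoretically because $M$ represents the moduli functor there, and the fiber is $\operatorname{Pic}^{\chi+g-1}(C)$. Finally, $\operatorname{Pic}^\nu_C=\operatorname{Pic}^0(C)$ and the action is tensor product, as claimed.

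\textbf{Main obstacle.} The hardest step is the stability and Euler-characteristic invariance for non-reduced supports such as $rC$. The multidegree formula has to be justified for sheaves on non-reduced curves. My approach is to filter $\mathcal G$ so that the graded pieces are $\mathcal O_{C_i,\mathrm{red}}$-modules, and to use that $\mathcal L$ restricted to $(C_i)_{\mathrm{red}}$ has degree $0$. Then $\chi$ is additive and, on each graded piece, $\chi(\cdot\otimes\mathcal L)=\chi(\cdot)+\operatorname{rk}\cdot\deg=\chi(\cdot)$.
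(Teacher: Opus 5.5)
Your proposal is correct and follows the same route as the paper. The action is well defined because tensoring by a line bundle of multidegree zero preserves the Fitting support, the Euler characteristic of every subsheaf, and hence Gieseker stability; on a smooth member, simplicity gives connectedness, and pure sheaves with Fitting support $C$ are line bundles of degree $\chi+g(C)-1$, which are automatically stable. You simply give more detail than the paper's one-line justification, namely flatness, the filtration argument for non-reduced supports, and the descent to the coarse space.
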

\begin{proof}
    The action is well-defined since tensoring with line bundles in $\operatorname{Pic}^\nu$ does not change the degree of a pure sheaf.

    A smooth member $C$ of the family is necessarily connected since it is simple.
    Pure sheaves on $C$ are locally free.
    To have Fitting support $[C]$, they have to be line bundles, which themselves are automatically stable.
\end{proof}

Now suppose $\beta=r\ell$ for some integer $r>0$ such that $|\ell|$ contains a smooth curve $C$ of genus $g\ge 2$.
Note that $rC$ may not be simple in general.
\begin{lemma}\label{lem:rCgenus}
    Suppose $\deg N_{C/S}>0$, then $rC$ is simple for all $r\ge 1$.
    Moreover, it has arithmetic genus
    \[h^1(rC,\mathcal O_{rC})=\binom{r}{2}\deg N_{C/S}+r(g-1)+1.\]
\end{lemma}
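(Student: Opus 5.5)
The natural approach is induction on $r$, using the filtration of $\mathcal O_{rC}$ by powers of the ideal sheaf of $C$. Write $N=N_{C/S}$ and $I=\mathcal O_S(-C)$. The closed immersions $C\subset 2C\subset\cdots\subset rC$ give short exact sequences
\[0\to I^{k}/I^{k+1}\to \mathcal O_{(k+1)C}\to \mathcal O_{kC}\to 0,\]
and $I^k/I^{k+1}\cong \mathcal O_S(-kC)|_C\cong N^{\otimes(-k)}$ is a line bundle on $C$ of degree $-k\deg N<0$ for $k\ge 1$.

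\textbf{Arithmetic genus.} Because $\deg N^{-k}<0$, we have $H^0(C,N^{-k})=0$. The long exact sequence therefore yields an injection $H^0(\mathcal O_{(k+1)C})\hookrightarrow H^0(\mathcal O_{kC})$. By induction starting from $H^0(\mathcal O_C)=k$ (here $C$ is smooth and connected, so $h^0=1$), we get $h^0(\mathcal O_{rC})=1$. Riemann--Roch on $C$ gives
\[h^1(N^{-k})=k\deg N+g-1.\]
Additivity of $\chi$ then gives
\[\chi(\mathcal O_{rC})=\sum_{k=0}^{r-1}\chi(N^{-k})=\sum_{k=0}^{r-1}\bigl(-k\deg N+1-g\bigr)=-\tbinom{r}{2}\deg N+r(1-g).\]
Since $h^0=1$ and $rC$ is one-dimensional and proper, we conclude
\[h^1(\mathcal O_{rC})=1-\chi(\mathcal O_{rC})=\tbinom r2\deg N+r(g-1)+1.\]

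\textbf{Simplicity.} Here we need $\mathcal O_T\to f_{T\ast}\mathcal O_{(rC)_T}$ to be an isomorphism for every base change. For the single curve $rC$ over a field $k$, base change to $T=\operatorname{Spec}A$ is flat over $k$, so cohomology commutes with it: $H^0((rC)_A,\mathcal O)=H^0(rC,\mathcal O)\otimes_k A=A$ for any $k$-algebra $A$. For an arbitrary (non-flat) $T$, I would use the same filtration after base change. The graded pieces $N^{-k}\otimes_k A$ still have vanishing $H^0$, because $H^0$ of a quasi-coherent pullback over an affine $T$ is computed via flat base change over $k$. The injections then persist and give $H^0=A$.

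Note that the reduction to the flat case is automatic here because the base is a point. If "simple" is meant for the family near $rC$, the condition $h^0(\mathcal O)=1$ is open by semicontinuity and the cohomology-and-base-change theorem. I expect the only delicate point to be identifying $I^k/I^{k+1}$ with $N^{-k}$, which uses that $C$ is a Cartier divisor on the smooth surface $S$.
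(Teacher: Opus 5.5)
Your proposal is correct and is essentially the paper's proof: the same exact sequences $0\to N^{-k}\to\mathcal O_{(k+1)C}\to\mathcal O_{kC}\to 0$, the vanishing $H^0(C,N^{-k})=0$ giving $h^0(\mathcal O_{rC})=1$ by induction, and additivity of $\chi$ plus Riemann--Roch for the genus formula. Your base-change discussion only makes explicit what the paper leaves implicit: every scheme over a field is flat over it, so there is no genuinely ``non-flat'' case to handle, and the openness of $h^0(\mathcal O)=1$ is what justifies the paper's later step of shrinking $U$ to obtain a simple family.
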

\begin{proof}
    We make use of the exact sequence
    \[\begin{tikzcd}
        0\rar& (i_C)_\ast(N_{C/S}^{\otimes -r})\rar& \mathcal O_{(r+1)C}\rar&\mathcal O_{rC}\rar&0.
    \end{tikzcd}\]
    Since $H^0(N_{C/S}^\vee)=0$, we have $H^0((r+1)C)\le H^0(rC)$.
    Inductively, this shows that $rC$ is simple for all $r\ge 1$.

    We then have
    \[\chi(\mathcal O_{(r+1)C})-\chi(\mathcal O_{rC})=\chi(N_{C/S}^{\otimes-r})=-r\deg N_{C/S}-g+1\]
    which gives the genus formula by induction.
\end{proof}
\begin{definition}
    For $C\in|\ell|$ smooth, the global nilpotent cone at $rC$ is defined as the fiber of the support map at $rC$.

    For a sheaf $\mathcal F$ in the global nilpotent cone at $rC$, we say it is generically regular if its scheme-theoretic support is precisely $rC$.
    Otherwise, we say it is irregular.
\end{definition}
Let us remark that generically regular is an open condition.
\begin{definition}
    We say a component of a global nilpotent cone is generically regular (resp.~irregular) if a generic point of it is generically regular (resp.~irregular).
\end{definition}

\subsection{Sheaves on symplectic surfaces}
Let us now specialize to the situation where $S$ is symplectic.
As before let $\beta$ be a homology class and $U\subset|\beta|$ a smooth open subset such that the curve family $\mathcal C_U$ over $U$ is generically smooth.
\begin{proposition}
    Suppose the surface $S$ is symplectic, the family $\mathcal C_U/U$ is simple, and $M_U\to U$ is proper.
    Then $M_U\to U$ is a Lagrangian fibration, for which the action of $\operatorname{Pic}^\nu_{\mathcal C_U/U}\to U$ on it is a Liouville structure.
\end{proposition}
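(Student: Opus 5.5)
We need to check three things: that $M_U\to U$ is a fibration carrying a symplectic form with Lagrangian generic fiber; that $P=\operatorname{Pic}^\nu_{\mathcal C_U/U}$ is a smooth quasiprojective group space acting on it; and that $P$ agrees with $\operatorname{Aut}^0$ over the good locus and has $\operatorname{Lie}(P/U)\cong\Omega_U$ compatibly.

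\emph{Symplectic form and fibration.} For a symplectic surface $S$, Mukai's construction gives a holomorphic symplectic form on the stable locus of the moduli of sheaves. The tangent space at $\mathcal F$ is $\operatorname{Ext}^1(\mathcal F,\mathcal F)$, the form is the Yoneda pairing into $\operatorname{Ext}^2(\mathcal F,\mathcal F)\cong\mathbb C$ (Serre duality with $K_S\cong\mathcal O_S$), and obstructions vanish for stable sheaves. So $M_U$ is smooth and symplectic, and $U$ is smooth by hypothesis.

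We take flatness and connectedness of fibers from the dimension count. Over the dense open $U^\circ$ of smooth curves, \Cref{lem:picnuaction} identifies the fibers with $\operatorname{Pic}^{\chi+g-1}(C)$, of dimension $g$. The adjunction/deformation count gives $\dim U=g$ and $\dim M_U=2g$. Properness is assumed. I would argue equidimensionality of fibers via the symplectic structure, since isotropic subvarieties have dimension at most $g$. Then miracle flatness applies between smooth varieties. Connectedness follows from Stein factorization, since the generic fibers are connected. Lagrangianity of the generic fiber is standard: on a fiber $\operatorname{Pic}(C)$, tangent directions come from $H^1(\mathcal O_C)$ and the Yoneda product of two such classes vanishes.

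\emph{Liouville structure.} By simplicity, $\operatorname{Pic}_{\mathcal C_U/U}$ is an algebraic space, and its relative tangent space at a fiber is $H^1(\mathcal O_{C_t})$. Obstructions lie in $H^2=0$ since the curves have dimension one, so it is smooth over $U$. The open $\operatorname{Pic}^\nu$ is smooth of relative dimension $h^1(\mathcal O_{C_t})=g$, constant by flatness of $\mathcal C_U$. Quasiprojectivity is where I would be least careful. The degree-zero-on-components part should be of finite type, and I would cite a quasiprojectivity result for Picard schemes of families of curves; alternatively one could replace the requirement by working with the separated quotient. The action is \Cref{lem:picnuaction}. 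Over $U^\circ$, $P$ is the abelian scheme $\operatorname{Pic}^0$ acting simply transitively on the torsor $\operatorname{Pic}^{\chi+g-1}$, so it equals $\operatorname{Aut}^0$ there, possibly after shrinking $U^\circ$ to match the $B^\circ$ of \cite{beauville91}.

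\emph{Lie algebra isomorphism.} This is the main obstacle. $\operatorname{Lie}(P/U)=R^1f_\ast\mathcal O_{\mathcal C_U}$, while $T_{U}$ at $t$ is $H^0(C_t,N_{C_t/S})=H^0(\omega_{C_t})$, since $K_S$ is trivial and adjunction gives $N\cong\omega_{C_t}$. Relative Serre duality then gives
\[R^1f_\ast\mathcal O\cong (f_\ast\omega_{\mathcal C/U})^\vee\cong\Omega_U.\]
This needs base change for $f_\ast\omega$, which holds because $h^0(\omega_{C_t})=h^1(\mathcal O_{C_t})$ is constant. Rather than verifying compatibility with the symplectic identification over $U^\circ$ by hand, I would invoke the Remark: the action map $P\to\operatorname{Aut}_{M_U/U}$ is a monomorphism. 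A line bundle in $\operatorname{Pic}^\nu$ fixing every stable sheaf is trivial, which is checked on the generic smooth fibers and extended by separatedness. So $P$ is a subgroup space, and \cite[Theorem 4.8]{kim_neron} supplies the extension of $\operatorname{Lie}(P^\circ/U^\circ)\cong\Omega_{U^\circ}$ automatically. If the monomorphism claim is delicate on singular fibers, the Serre-duality isomorphism above is a fallback, since it agrees with the symplectic one on the dense open $U^\circ$ and hence everywhere, the bundles being locally free.
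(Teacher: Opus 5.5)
Your overall route is the paper's. It uses Mukai's Serre-duality form on $M_U$, vanishing of the Yoneda square on $\operatorname{Ext}^1_C(L,L)=H^1(\mathcal O_C)$ for the Lagrangian generic fiber, and connectedness of the generic fiber from \Cref{lem:picnuaction}. The Lie algebra is identified via $T_{[C]}U\cong H^0(N_{C/S})\cong H^0(\omega_C)$, dual to $H^1(\mathcal O_C)$. Your Stein factorization step for connectedness of all fibers is a reasonable addition. However, two steps do not hold up as written.

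\textbf{Flatness.} The remark that isotropic subvarieties have dimension at most $g$ only helps once you know every component of every fiber is isotropic. Nothing you establish gives this. The Lagrangian property of the generic fiber, equivalently Poisson-commutativity of $h^\ast\mathcal O_U$, does not formally pass to special fibers. For example, take $h=(x_1,x_1x_2)$ on $\mathbb C^4$ with $\sigma=dx_1\wedge dy_1+dx_2\wedge dy_2$. Its components Poisson-commute and its generic fibers are Lagrangian, yet $h^{-1}(0)=\{x_1=0\}$ is three-dimensional and not isotropic. So properness and some global (Hodge-theoretic) input must enter. That input is Matsushita's equidimensionality theorem \cite{matequid}, which the paper invokes for flatness. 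You should cite it rather than present equidimensionality as a formal consequence of the symplectic form.

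\textbf{Compatibility in the Liouville structure.} Your primary argument, that $P\to\operatorname{Aut}_{M_U/U}$ is a monomorphism, is not justified. Injectivity over $U^\circ$ does not extend to special fibers by separatedness, because kernels of homomorphisms of group spaces can jump. For instance, $(x,t)\mapsto(tx,t)$ on $\mathbb G_a\times\mathbb A^1$ is injective for $t\neq 0$ and zero at $t=0$. Faithfulness on a singular fiber would need a point there with scheme-theoretically trivial stabilizer. Irregular sheaves have positive-dimensional stabilizers, as used in \Cref{symp:irreg_nonred}, so this is not automatic.

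Your fallback is the right argument and is exactly the paper's: $\operatorname{Lie}(P/U)=R^1f_\ast\mathcal O\cong(f_\ast\omega_{\mathcal C/U})^\vee\cong\Omega_U$. What remains is to check that this agrees on $U^\circ$ with the symplectic identification; the paper treats this as classical, but you should sketch it. For $\mathcal F=i_\ast L$ on a smooth $C$, the local-to-global sequence $0\to H^1(\mathcal O_C)\to\operatorname{Ext}^1_S(\mathcal F,\mathcal F)\to H^0(N_{C/S})\to 0$ has the tangent space to the fiber as its sub and $dh$ as its quotient map. The Mukai pairing then induces, via $N_{C/S}\cong\omega_C$, the Serre duality pairing between $H^1(\mathcal O_C)$ and $H^0(\omega_C)$, up to sign.
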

\begin{proof}
    It is well-known that $M_U$ is smooth and there is a symplectic form on it given by Serre duality (as sheaves parameterized by $M_U$ are simple and have proper supports).

    If $L$ is a line bundle on a smooth curve in $U$, then the diagram
    \[\begin{tikzcd}
        \operatorname{Ext}^1(L,L)\times\operatorname{Ext}^1(L,L)\rar\dar&\operatorname{Ext}^2(L,L)=0\dar\\
        \operatorname{Ext}^1((i_{D})_\ast L,(i_{D})_\ast L)\times \operatorname{Ext}^1((i_{D})_\ast L,(i_{D})_\ast L)\rar&\operatorname{Ext}^2((i_{D})_\ast L,(i_{D})_\ast L)=\mathbb C
    \end{tikzcd}\]
    commutes, which shows that a generic fiber of $M_U\to U$ is Lagrangian.

    Since the curve family is simple and generically smooth, $M_U\to U$ is surjective and has connected generic fiber by \Cref{lem:picnuaction}.
    Flatness follows from \cite{matequid}.

    By Lemma \ref{lem:picnuaction}, it remains to compute the Lie algebra of $\operatorname{Pic}^\nu$.
    But this follows from the classical identifications $T_{[C]}U\cong H^0(C,N_{C/S})\cong H^0(C,\omega_C)$, where the second isomorphism follows from $S$ being symplectic.
\end{proof}
Now suppose $\beta=r[C]$ where $C$ is a smooth curve of genus $g\ge 2$ with $rC\in U$.
\begin{corollary}\label{symp:irreg_nonred}
    Suppose $S$ is symplectic and $M_U\to U$ is proper.
    Then any irregular component of the global nilpotent cone at $rC$ is non-reduced.
\end{corollary}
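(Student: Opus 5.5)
The plan is to apply \Cref{symp:nrd_stab} to the Liouville structure $P=\operatorname{Pic}^\nu_{\mathcal C_U/U}$ from the preceding proposition. Concretely, we will show that for a generic point $\mathcal F$ of an irregular component $F\subset h^{-1}(rC)$, the stabilizer $\operatorname{Stab}_{P_{rC}}(\mathcal F)$ has positive dimension. By \Cref{symp:nrd_stab}, this rules out $F$ being reduced. One preliminary point: for the proposition to apply we need $\mathcal C_U/U$ to be simple. We can shrink $U$ to the open locus where fibers are simple, which is open by semicontinuity. This locus contains $rC$: since $S$ is symplectic, $N_{C/S}\cong\omega_C$ has positive degree when $g\ge2$, so \Cref{lem:rCgenus} applies. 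Properness is preserved on this open set.

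\textbf{Producing the stabilizer.} Let $\mathcal F$ be irregular. Its schematic support is a closed subscheme $D\subset rC$ that is strictly smaller than $rC$. Since $\mathcal F$ is pure of dimension one with Fitting support $rC$, its schematic support is a curve supported on $C$. It therefore contains $kC$ for some $k<r$, and we can arrange that $D$ is contained in $(r-1)C$ away from finitely many points. The cleanest route is to note that the schematic support is the annihilator. Irregularity means the annihilator ideal strictly contains $\mathcal O(-rC)$ at the generic point of $C$. Since $rC$ is locally $t^r=0$ in a uniformizing coordinate $t$ normal to $C$, the annihilator at the generic point must be $(t^k)$ with $k<r$. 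Hence, away from a finite set $Z$, $\mathcal F$ is an $\mathcal O_{(r-1)C}$-module.

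Now consider the kernel
\[
K=\ker\bigl(\operatorname{Pic}^\nu(rC)\to\operatorname{Pic}(D')\bigr),
\]
where $D'=(r-1)C$, adjusted near $Z$ as needed. Any line bundle in $K$ acts trivially on $\mathcal F$, since $\mathcal L\otimes\mathcal F\cong\mathcal L|_{D}\otimes\mathcal F$. Using the sequence from \Cref{lem:rCgenus}, the kernel of $\operatorname{Pic}(rC)\to\operatorname{Pic}((r-1)C)$ is the vector group
\[
H^1\bigl(C,N_{C/S}^{\otimes-(r-1)}\bigr)
\]
modulo the image of $H^0$ terms. Its dimension is $(r-1)\deg N_{C/S}+g-1>0$, and it lies in $\operatorname{Pic}^\nu$ because it is a unipotent connected group.

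\textbf{Main obstacle.} The hard step is the finite set $Z$ where $\mathcal F$ need not be killed by the ideal of $(r-1)C$. The first attempt would be to replace $D'$ by the schematic support $D$ itself. Then $\ker(\operatorname{Pic}(rC)\to\operatorname{Pic}(D))$ still contains the subgroup of line bundles trivialized near $Z$, which is $H^1$ of an ideal sheaf differing from $N^{-(r-1)}$ by a finite-colength twist. By a dimension count this still has positive dimension, since $(r-1)\deg N+g-1$ exceeds the length correction for generic $\mathcal F$. Alternatively, we could check that $\operatorname{Pic}(rC)\to\operatorname{Pic}(D)$ is surjective with kernel of dimension $g(rC)-g(D)>0$, using $\chi(\mathcal O_D)>\chi(\mathcal O_{rC})$ because $D\subsetneq rC$ is a proper curve subscheme. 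This second approach avoids $Z$ entirely, and it is the one I would try first. In either case the stabilizer has positive dimension, and \Cref{symp:nrd_stab} gives non-reducedness.
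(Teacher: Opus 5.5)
Your main line is the paper's argument. You shrink $U$ using \Cref{lem:rCgenus} (via $N_{C/S}\cong\omega_C$), and note that line bundles on $rC$ that are trivial on a smaller multiple of $C$ containing the schematic support fix $\mathcal F$. You then use the sequence from \Cref{lem:rCgenus} to get $\ker\bigl(\operatorname{Pic}(rC)\to\operatorname{Pic}((r-1)C)\bigr)\cong H^1(C,N_{C/S}^{\otimes-(r-1)})$, of dimension $(r-1)\deg N_{C/S}+g-1=(2r-1)(g-1)>0$, and conclude with \Cref{symp:nrd_stab}. The paper restricts to $r'C$ and reduces to this same one-step kernel.

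The problem is the step you call the main obstacle. It is not a real obstacle, but you only handle it with loose patches. The missing observation is that $\mathcal F$ is pure. You already know the annihilator of $\mathcal F$ at the generic point of $C$ is $(t^k)$ with $k\le r-1$. So the subsheaf $\mathcal O_S(-(r-1)C)\cdot\mathcal F\subset\mathcal F$ is supported on a finite set, and purity forces it to be zero. Hence $Z=\varnothing$, and $\mathcal F$ is an $\mathcal O_{(r-1)C}$-module everywhere. More precisely, $\mathcal O_D\hookrightarrow\underline{\operatorname{End}}(\mathcal F)$ shows $D$ has no embedded points, and an unmixed height-one ideal in a regular local ring is principal, so $D=r'C$ exactly. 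This is what the paper implicitly uses when it writes $\mathcal F=(i_{r'C})_\ast\mathcal G$.

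With this observation your two workarounds are unnecessary, and as written neither is a complete argument.

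\emph{Second alternative.} The map $\operatorname{Pic}(rC)\to\operatorname{Pic}(D)$ is indeed surjective, but its kernel has dimension
\[h^1(\mathcal O_{rC})-h^1(\mathcal O_D)=\chi(\mathcal O_D)-\chi(\mathcal O_{rC})+1-h^0(\mathcal O_D).\]
So the inequality $\chi(\mathcal O_D)>\chi(\mathcal O_{rC})$, which you also assert without proof, suffices only if $h^0(\mathcal O_D)=1$. That is exactly what could fail in the embedded-point scenario you were worried about.

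\emph{First alternative.} The claim that $(r-1)\deg N_{C/S}+g-1$ ``exceeds the length correction for generic $\mathcal F$'' is not argued. Once you use purity, you can drop both alternatives and the proof is the paper's.
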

\begin{proof}
    By \Cref{lem:rCgenus}, $rC$ is simple and has genus at least $2$.
    Possibly after shrinking $U$ further, we may assume that $\mathcal C_U\to U$ is a simple family.
    The preceding proposition applies and gives a Liouville structure on $M_U\to U$.

    Let $\mathcal F$ be a generic sheaf in that irregular component.
    By definition, its scheme-theoretic support is $r'C$ for some $0<r'<r$.
    Write $\mathcal F=(i_{r'C})_\ast\mathcal G$.
    So for any $L\in\operatorname{Pic}(rC)$, we have
    \[((i_{r'C})_\ast\mathcal G)\otimes_{\mathcal O_{rC}} L=(i_{r'C})_\ast(\mathcal G\otimes_{\mathcal O_{r'C}} L|_{r'C}).\]
    Hence the kernel of $\operatorname{Pic}^\nu(rC)\to\operatorname{Pic}^\nu(r'C)$ is contained in the stabilizer of $\mathcal F$ under the Liouville structure described earlier in this section.

    By \Cref{symp:nrd_stab}, it suffices to show that the kernel of $\operatorname{Pic}((r+1)C)\to\operatorname{Pic}(rC)$ is positive-dimensional for all $r\ge 1$.
    But by \Cref{lem:rCgenus} again, the kernel has dimension precisely $(2r+1)(g-1)$ which is positive.
\end{proof}
Let us now give concrete examples where this can be applied.
\subsubsection{Projective surfaces}
Suppose $S$ is a projective surface with trivial canonical bundle.
Then it is symplectic.
In fact, it is either a K3 surface or an abelian surface.

Take $\ell=[C]$ for a smooth curve $C$ on $S$ with genus at least $2$.
In these cases, $|r\ell|$ is a projective bundle over an abelian variety, and the curve family is generically smooth.
The latter claim is clear for K3 surfaces ($C$ would be basepoint-free); for abelian surfaces, it follows from Lefschetz's theorem that every ample line bundle on an abelian variety has globally generated square and very ample higher powers.

Suppose $\ell$ is primitive, then $M_S(r\ell,\chi)$ is a projective holomorphic symplectic variety when $\gcd(r,\chi)=1$.
In particular, the support map is proper.
So \Cref{symp:irreg_nonred} applies, and we obtain:
\begin{corollary}\label{k3:irreg_nonred}
    Irregular components of the global nilpotent cone at $rC$ are non-reduced.
\end{corollary}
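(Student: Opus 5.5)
The plan is to deduce the statement directly from \Cref{symp:irreg_nonred}, by checking its hypotheses for a projective surface $S$ with trivial canonical bundle, $\beta=r\ell$ with $\ell=[C]$ primitive and $g(C)\ge 2$, and $\gcd(r,\chi)=1$. The corollary needs three things: $S$ is symplectic; there is a smooth open $U\subset|\beta|$ containing $rC$ over which the curve family is generically smooth; and $M_U\to U$ is proper.

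First, $K_S\cong\mathcal O_S$ gives a nowhere-vanishing holomorphic $2$-form, so $S$ is symplectic.

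Second, I take $U$ to be the smooth locus of $|\beta|$. Since $|r\ell|$ is a projective bundle over an abelian variety (a point in the K3 case, $\widehat S$ in the abelian case), all of $|\beta|$ is smooth and we may take $U=|\beta|$. It is irreducible and contains $rC$. Generic smoothness of the curve family holds as in the discussion above: for a K3 surface, $r\ell$ is basepoint-free and big, so Bertini applies. For an abelian surface, $\ell$ is ample since $C^2=2g-2>0$, so $r\ell$ is globally generated for $r\ge 2$ by Lefschetz. For $r=1$ the nilpotent cone is a single smooth curve class and there is nothing to prove. Because the members of $|\beta|$ vary over translates in the abelian case, I would apply Bertini fiberwise over $\widehat S$ and note that the general member is smooth.

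Third, properness: since $\ell$ is primitive and $\gcd(r,\chi)=1$, for a generic polarization every semistable sheaf is stable. Hence $M_S(r\ell,\chi)$ is a projective (smooth, holomorphic symplectic) variety. The support map to the projective scheme $|\beta|$ is then proper, and so is its restriction $M_U\to U$.

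With these verified, \Cref{symp:irreg_nonred} applies at the point $rC\in U$, and the irregular components of $h^{-1}(rC)$ are non-reduced. The step needing most care is the projectivity of $M_S(r\ell,\chi)$, namely that the coprimality condition rules out strictly semistable sheaves for a generic polarization. I would cite the standard wall-avoidance argument: a destabilizing subsheaf would have class $r'\ell$ with $\chi'/r'=\chi/r$, which is impossible when $\gcd(r,\chi)=1$ and $0<r'<r$, because $\beta$ is a multiple of the primitive class $\ell$.
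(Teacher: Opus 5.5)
Your proposal is correct and follows the paper's route: the paper likewise just checks the hypotheses of \Cref{symp:irreg_nonred}. It notes that $S$ is symplectic, that $|r\ell|$ is a projective bundle over an abelian variety whose members are generically smooth (by basepoint-freeness for K3, Lefschetz for abelian surfaces), and that $M_S(r\ell,\chi)$ is projective when $\ell$ is primitive and $\gcd(r,\chi)=1$, so the support map is proper. Your wall-avoidance sketch adds detail the paper omits; note that it is the genericity of the polarization that forces an equal-slope subsheaf to have support class proportional to $\ell$, after which coprimality finishes the argument.
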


\subsubsection{The (twisted) Hitchin fibration}
Suppose $C$ is a smooth curve of genus $g\ge 2$, and $L\in\operatorname{Pic}(C)$ is either $K_C$ or has degree greater than $2g-2$.
Let
\[S=\operatorname{Tot}_L\to C\]
be the total space of $L$ over $C$.

View $C$ as a subvariety of $S$ by identifying it with the zero-section.
Let $\ell=[C]$.
A generic member of the system $|r\ell|$ of spectral curves is smooth.

Suppose $\gcd(r,\chi)=1$, then $M_S(r\ell,\chi)$ is smooth and the support map
\[M_S(r\ell,\chi)\to |r\ell|\]
is proper flat (see \cite{chaudlaumon,MSchi}).
Note that $|r\ell|$ is an affine space.
\begin{definition}
    The map $M_S(r\ell,\chi)\to |r\ell|$ is known as the $L$-twisted ($\operatorname{GL}_r$-)Hitchin fibration.
\end{definition}

When $L=K_C$, we simply call this the Hitchin fibration.
The surface $S$ is symplectic in this case, so it follows from \Cref{symp:irreg_nonred} that:
\begin{corollary}\label{cor:hitchin_using_group_scheme}
    Irregular components of the global nilpotent cone for the Hitchin fibration are non-reduced.
\end{corollary}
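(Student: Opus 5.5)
The plan is to obtain this corollary by applying \Cref{symp:irreg_nonred} to $S=\operatorname{Tot}_{K_C}$, $\beta=r\ell$ with $\ell=[C]$, and $U=|r\ell|$. So the work is to check the hypotheses of that corollary in this setting. The argument needs no new input: it reduces to verifying that $S$ is symplectic, that $U$ is smooth with generically smooth curve family, and that $M_U\to U$ is proper.

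\textbf{Step 1: $S$ is symplectic.} The total space of $K_C$ is the cotangent bundle $T^\ast C$. It carries the canonical holomorphic symplectic form $d\lambda$, where $\lambda$ is the tautological one-form.

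\textbf{Step 2: the base and the curve family.} The base $|r\ell|$ is identified with the affine space $B=\bigoplus_{i=1}^r H^0(C,K_C^{\otimes i})$, so it is smooth. A generic spectral curve is smooth, as recalled above, so the curve family over $U$ is generically smooth. The point $rC$ corresponds to $0\in B$ and lies in $U$.

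\textbf{Step 3: properness.} Since $\gcd(r,\chi)=1$, the support map $M_S(r\ell,\chi)\to|r\ell|$ is proper and flat by \cite{chaudlaumon,MSchi}, so $M_U\to U$ is proper.

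\textbf{Step 4: the remaining inputs inside the proof of \Cref{symp:irreg_nonred}.} That proof uses \Cref{lem:rCgenus}, which needs $\deg N_{C/S}>0$. Here $N_{C/S}\cong K_C$ has degree $2g-2>0$ because $g\ge 2$. It follows that $rC$ is simple, and we may shrink $U$ so that the family is simple.

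The kernel-dimension count in that proof requires the kernel of $\operatorname{Pic}(r''C)\to\operatorname{Pic}(r'C)$, for $r'<r''$, to be positive-dimensional. By the genus formula this dimension is $h^1(rC)$-differences equal to $(2m+1)(g-1)>0$ at each step $mC\subset(m+1)C$. In particular, $\ker(\operatorname{Pic}^\nu(rC)\to\operatorname{Pic}^\nu(r'C))$ has positive dimension. For a generic point of an irregular component this kernel lies in the stabilizer, so \Cref{symp:nrd_stab} forbids the component from being reduced.

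The only step with any content is the properness input in Step 3, and it is taken from the literature. Everything else is a direct specialization, so I expect no real obstacle.
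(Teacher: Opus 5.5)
Your proposal is correct and takes the paper's route: the paper notes that $\operatorname{Tot}_{K_C}$ is symplectic and applies \Cref{symp:irreg_nonred}, using properness of the support map when $\gcd(r,\chi)=1$ and generic smoothness of spectral curves from the preceding discussion. Your extra check that $\deg N_{C/S}=2g-2>0$, so that \Cref{lem:rCgenus} applies and the kernels have dimension $(2m+1)(g-1)>0$, is exactly what the proof of that corollary uses.
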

\begin{remark}
    One can define the Hitchin fibration for a general (connected) reductive group.
    When the twist is $K_C$, a similar Liouville structure always exists \cite{dCHM}.
    So \Cref{symp:nrd_stab} can be applied to any such cases.
\end{remark}
We will provide another proof of this soon, using the theory of weights.

\section{Moduli of Higgs bundles}\label{sec:Higgs}

\subsection{\texorpdfstring{$\mathbb{G}_m$}{Gm}-action and GIT-stratification}\label{sec:GIT}
To start, we recall some results about Bia{\l}ynicki-Birula stratifications for twisted Hitchin fibrations.

Fix a curve $C$ of genus $g\ge 2$.
Let $r>0$ and $d$ be integers with $\gcd(r,d)=1$, and let $L$ be a line bundle on $C$ with either $L=K_C$ or $\deg L >2g-2$.

We denote by $M_{r,d}^L$ the moduli space of stable $L$-twisted Higgs bundles of rank $r$ and degree $d$, that is, the space of pairs $(E,\phi)$ where $E$ is a vector bundle on $C$ of rank $r$ and degree $d$, and $\phi:E\to E\otimes L$.

This is a smooth quasiprojective variety, admitting a Hitchin fibration
\[h_L: M_{r,d}^L \rightarrow B:= \bigoplus_{i=1}^r H^0(C, L^{\otimes i}),\]
sending a pair $(E,\phi)$ to the characteristic polynomial of $\phi$.
This is the same as the support fibration on the total space of $L$ constructed before by the BNR correspondence \cite{bnr}, by setting $\chi=d+r(1-g)$.
Under this correspondence, $rC\in |\beta|$ corresponds to the point $0\in B$.

We have an action of $\mathbb{G}_m$ on $M_{r,d}^L$ by $\lambda\cdot(E, \phi)= (E,\lambda\phi)$.
Suppose $(E,\phi)$ is a fixed point; then up to normalization the $\lambda$-action decomposes $E$ into eigenspaces
\[E = E_0 \oplus\cdots\oplus E_k,\]
where the weight of the $\mathbb{G}_m$-action on $E_i$ is $-i$, and $\phi(E_i)\subset E_{i+1}\otimes L$.

Let us enumerate the components of $(M_{r,d}^L)^{\mathbb{G}_m}$ as $Z_1, \dots, Z_N$.
These $Z_m$ are smooth since $M_{r,d}^L$ is.
The goal of this section is to realize the nilpotent cone $h_L^{-1}(0)$ as a union of GIT-strata. 

First let us consider the vector bundle $\underline{\operatorname{End}}(\mathbb{E})$ on $M_{r,d}^L\times C$, where $\mathbb{E}$ is a universal (twisted) Higgs bundle.
Let $\mathcal L:= \det(\mathbf R\operatorname{pr}_{M\ast}\underline{\operatorname{End}}(\mathbb{E})[1])$.
By the arguments in \cite{Nisture}, this is relatively ample for $h_L$.

On the other hand, the map $h_L$ is $\mathbb{G}_m$-equivariant for the $\mathbb{G}_m$-action on $B$ given by $(a_1,\ldots,a_r)\mapsto (\lambda a_1,\ldots,\lambda^r a_r)$.
So each $Z_m$ is contained in $h_L^{-1}(0)$.

Let $\mathcal O_B(1)$ be a $\mathbb{G}_m$-equivariant line bundle such that the weight of $\mathcal O_B(1)|_0$ is positive.
Then for $N$ sufficiently large the weight of the $\mathbb{G}_m$-linear polarization
\[\mathcal L' := \mathcal L\otimes h_L^\ast\mathcal O_B(N)\]
on each $Z_m$ is positive.
In particular, by the Hilbert-Mumford criterion, the unstable points of $M_{r,d}^L$ are precisely those which are contracted to some $Z_m$ under the repelling $\mathbb{G}_m$-action.
Since $h_L$ is proper and $\mathbb{G}_m$-equivariant, this means that the unstable locus is precisely $h_L^{-1}(0)$. 

Now by carrying out the construction outlined in \cite[\S2.1]{HL1}, we obtain a stratification of $h^{-1}_L(0)^{red}$ into smooth locally closed subschemes $S_1, \dots, S_N$ such that:
\begin{itemize}
    \item $S_m$ is a locally trivial affine bundle over $Z_m$
    \item\label{test} For $z_m\in Z_m\subset (M_{r,d}^L)^{\mathbb{G}_m}$, we can compute the tangent and normal spaces:
    \begin{equation}\label{eq:GITtangents}
        T_{S_m, z_m} = T_{M_{r,d}^L, z_m}^{\le 0}, \qquad N_{S_m/M_{r,d}^L, z_m} = T_{M_{r,d}^L, z_m}^{>0},
    \end{equation}
    where the weights are considered with respect to the standard (contracting) $\mathbb{G}_m$-action on $M_{r,d}^L$.
\end{itemize}

When $L = K_C$, we have an additional piece of structure:
\begin{theorem}[{\cite[Proposition 3.9]{HH}}]\label{prop:lagrangianstrata}
    Let $L = K_C$; in this case $h_L$ is a Lagrangian fibration.
    Then all of the $S_m$ are Lagrangian submanifolds.
\end{theorem}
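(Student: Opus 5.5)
The plan is to reduce the statement to a weight count on the fixed loci, using the symplectic form and the tangent description \eqref{eq:GITtangents}. Fix $z\in Z_m$. Write $\omega$ for the holomorphic symplectic form on $M=M^{K_C}_{r,d}$. It scales with weight one under the $\mathbb G_m$-action, $\lambda^\ast\omega=\lambda\omega$, because it is given by Serre duality on deformations of $(E,\phi)$ and $\phi$ carries weight one.

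First I compute the weight decomposition of $T_{M,z}$. The space $T_{M,z}$ splits as $\bigoplus_k T^{k}$ into $\mathbb G_m$-weight spaces, and $\omega_z$ is a $\mathbb G_m$-semi-invariant perfect pairing. Equivariance of $\omega$ means $\omega_z(T^{a},T^{b})=0$ unless $a+b=1$, so $\omega_z$ identifies $T^{k}$ with $(T^{1-k})^\vee$. The involution $k\mapsto 1-k$ sends weights $\le 0$ bijectively onto weights $\ge 1$. Hence
\[\dim T^{\le 0}_{M,z}=\dim T^{>0}_{M,z}=\tfrac12\dim M .\]

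Next I show that each $S_m$ is isotropic. By \eqref{eq:GITtangents}, $T_{S_m,z}=T^{\le0}$, and for $a,b\le 0$ we have $a+b\le 0\neq 1$. So $\omega$ vanishes on $T_{S_m,z}$ at every fixed point $z\in Z_m$. To pass to an arbitrary point $y\in S_m$, note that $\lim_{\lambda\to 0}\lambda\cdot y=z\in Z_m$, which holds because the stratum is attracted to $Z_m$ under the appropriate limit. Isotropy is a closed condition and is preserved by the action, up to the scalar $\lambda$. Therefore $\omega|_{S_m}$, pulled back along the flow, equals $\lambda^{-1}$ times the restriction near $z$, and this tends to $0$. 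Hence $\omega|_{T_{S_m,y}}=0$ identically. Combined with the dimension count, $\dim S_m=\frac12\dim M$, so $S_m$ is Lagrangian.

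The main obstacle is the sign and weight convention. The key fact is that "$\le 0$" in \eqref{eq:GITtangents} is exactly complementary to the weight $1-k$ under the pairing. If instead the convention made $\omega$ have weight $-1$, I would redo the pairing rule as $a+b=-1$ and check that $\le 0$ still pairs with $>0$. The bijection must not be off by the single weight $0$ versus $1$. I would verify this on the simplest fixed locus, the moduli of stable bundles with $\phi=0$. There $T^{0}=H^1(\operatorname{End}E)$ and $T^{1}=H^0(\operatorname{End}E\otimes K_C)$, which are Serre dual as required. The same check confirms that the stratum is the cotangent-fibre direction, of dimension exactly half.
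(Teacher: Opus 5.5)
Your computation at the fixed points is correct and is the standard argument behind the Hausel--Hitchin result, which the paper cites without proof. Since $\omega$ has weight $1$, $\omega_z$ pairs $T^k$ perfectly with $T^{1-k}$, so $T^{\le 0}_{M,z}$ is isotropic of dimension $\tfrac12\dim M$.

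The gap is the passage from $Z_m$ to the rest of $S_m$: the flow runs the wrong way. Since $T_{S_m,z}=T^{\le 0}$ for the standard action, $S_m$ is the \emph{downward} flow of $Z_m$. For $y\in S_m$ one has $\lim_{\lambda\to\infty}\lambda\cdot y\in Z_m$, whereas $\lim_{\lambda\to 0}\lambda\cdot y$ lies in a different fixed component unless $y$ is fixed. With your limit $\lambda\to 0$, the identity $\omega_y(v,w)=\lambda^{-1}\omega_{\lambda y}(d\lambda\,v,d\lambda\,w)$ has $\lambda^{-1}\to\infty$, so nothing is forced to vanish. No argument run in that direction can succeed. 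The points flowing to $Z_m$ as $\lambda\to 0$ have tangent space $T^{\ge 0}$ at $z$, on which $\omega$ pairs $T^0$ with $T^1$ nondegenerately. For $Z_m$ the locus of stable bundles with $\phi=0$, that set is even open in $M$. Your remark that isotropy is a closed invariant condition does not rescue this. A closed $\mathbb G_m$-stable subset of $S_m$ containing $Z_m$ need not be all of $S_m$ ($Z_m$ itself is one). What spreads along orbits accumulating on $Z_m$ is an open condition or a quantitative estimate.

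To repair it, let $\lambda\to\infty$ and use the affine-bundle structure $S_m\to Z_m$. In local coordinates adapted to it, fibre coordinates are homogeneous of negative $\lambda$-weight and base coordinates have weight $0$. So $d\lambda\,v$ and $d\lambda\,w$ stay bounded, converging to the projections of $v,w$ to $T_zZ_m$. Hence $\lambda^{-1}\omega_{\lambda y}(d\lambda\,v,d\lambda\,w)\to 0$, which forces $\omega_y(v,w)=0$. Equivalently, every term of $\omega|_{S_m}$ in such coordinates has $\lambda$-weight $\le 0$, while $\omega$ has weight $1$, so $\omega|_{S_m}=0$. Since $S_m$ is smooth of dimension $\dim T^{\le 0}_{M,z}=\tfrac12\dim M$, this gives the Lagrangian property.

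The same reversal appears in your sanity check. For the stable-bundle component, $T_{S_m,z}=T^0=H^1(\operatorname{End}E)$, so $S_m$ is the zero section (the moduli of stable bundles itself). It is not the cotangent-fibre direction $T^1$, which is the upward flow. Both are Lagrangian, which is why the dimension check did not catch the mix-up.
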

In particular, the argument above realizes the components of the ``downward flows'' in \cite{HH} as GIT-strata. 

When $\deg L > 2g-2$, we shall show the following theorem by computing the tangent spaces in \Cref{eq:GITtangents} directly.
\begin{proposition}\label{prop:twistedstrata}
    Suppose $\deg L > 2g-2$. Then $\dim S_m = \dim h_L^{-1}(0)$ if and only if the points of $Z_m$ are generically regular. 
\end{proposition}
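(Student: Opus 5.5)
The plan is to compute $\dim S_m$ from the weight decomposition \eqref{eq:GITtangents}. We have $\dim S_m=\dim M^L_{r,d}-\dim T^{>0}_{M,z_m}$, and we compare the result with $\dim h_L^{-1}(0)=\dim M^L_{r,d}-\dim B$.

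\textbf{Setup at a fixed point.} Fix $z_m=(E,\phi)\in Z_m$ with $E=E_0\oplus\cdots\oplus E_k$, and write $r_i=\operatorname{rk}E_i$, $d_i=\deg E_i$. The tangent space is $\mathbb H^1$ of the deformation complex
\[C^\bullet:\ \underline{\operatorname{End}}(E)\xrightarrow{[\,\cdot\,,\phi]}\underline{\operatorname{End}}(E)\otimes L .\]
\begin{itemize}
\item \emph{Vanishing.} By stability, $\mathbb H^0=\mathbb C$ (the scalars). Since $\deg L>2g-2$, Serre duality identifies $\mathbb H^2$ with the dual of endomorphisms of $(E,\phi)$ twisted by $K_CL^{-1}$, which has negative degree, so stability forces $\mathbb H^2=0$.
\item \emph{Dimension counts.} Riemann--Roch then gives $\dim M^L_{r,d}=r^2\deg L+1$ and $\dim B=\sum_{i=1}^r(i\deg L+1-g)$. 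Hence
\[\dim h_L^{-1}(0)=\tbinom r2\deg L+r(g-1)+1.\]
\item \emph{Weight decomposition.} The complex $C^\bullet$ splits into $\mathbb G_m$-weight pieces $C^\bullet_w$. Up to sign conventions, $C^0_w=\bigoplus_{j-i=w}\operatorname{Hom}(E_i,E_j)$ and $C^1_w=\bigoplus_{j-i=w-1}\operatorname{Hom}(E_i,E_j)\otimes L$, because $\phi$ has weight $1$. Since $\mathbb H^0$ sits in weight $0$ and $\mathbb H^2=0$, each positive weight piece satisfies $\dim T^{w}=-\chi(C^\bullet_w)$ for $w>0$.
\end{itemize}

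\textbf{Computing $\dim T^{>0}$.} Sum $-\chi(C^\bullet_w)$ over $w\ge1$, using
\[\chi(\operatorname{Hom}(E_i,E_j))=r_ir_j(1-g)+r_id_j-r_jd_i,\]
with an extra $r_ir_j\deg L$ for the twist by $L$.
\begin{itemize}
\item \emph{Degree terms cancel.} From $C^0$ they appear summed over $j>i$. From $C^1$ they appear summed over $j\ge i$, and the diagonal terms $j=i$ contribute zero. The two sums are therefore identical and cancel.
\item \emph{What remains.} We are left with
\[\dim T^{>0}=(1-g)\sum_i r_i^2+\deg L\sum_{j\ge i}r_ir_j .\]
\item \emph{Simplifying.} Using $\sum_{j\ge i}r_ir_j=\tfrac12\bigl(r^2+\sum_i r_i^2\bigr)$, we get
\[\dim S_m=\tfrac12\deg L\Bigl(r^2-\sum_i r_i^2\Bigr)+(g-1)\sum_i r_i^2+1,\]
and therefore
\[\dim h_L^{-1}(0)-\dim S_m=\Bigl(\tfrac12\deg L-(g-1)\Bigr)\Bigl(\sum_i r_i^2-r\Bigr).\]
\end{itemize}
Because $\deg L>2g-2$, the first factor is positive. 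The second factor is nonnegative and vanishes exactly when every $r_i=1$, i.e.\ when $k=r-1$ and each $E_i$ is a line bundle. The ranks $r_i$ are locally constant, hence constant on $Z_m$.

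\textbf{Identifying $r_i=1$ with generic regularity.} Under BNR, generic regularity of a nilpotent $(E,\phi)$ means its scheme-theoretic support is $rC$, i.e.\ $\phi^{r-1}\neq0$.
\begin{itemize}
\item If all $r_i=1$, each component $E_i\to E_{i+1}\otimes L$ of $\phi$ is nonzero. Otherwise $E_{i+1}\oplus\cdots\oplus E_k$ would be a $\phi$-invariant subbundle; comparing its slope with the one coming from $E_0\oplus\cdots\oplus E_i$ contradicts stability. So $\phi^{r-1}\colon E_0\to E_{r-1}\otimes L^{r-1}$ is nonzero.
\item Conversely, $\phi$ raises the index by one, so $\phi^{k+1}=0$. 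If some $r_i\ge2$ then $k<r-1$, hence $\phi^{r-1}=0$ and the point is irregular.
\end{itemize}
This proves the equivalence.

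\textbf{Main obstacle.} The delicate step is the bookkeeping of weights: the sign convention relating the contracting action to $T^{>0}$, and the vanishing of $\mathbb H^2$ in each weight piece. I would check both against the known case $Z_m=\mathcal M(r,d)$ (where $\phi=0$). There $k=0$, so $\sum_i r_i^2=r^2$ and the formula gives $\dim S_m=r^2(g-1)+1$, which is $\dim T^*\mathcal M(r,d)$ restricted to the bundle directions, consistent with $S_m$ being the moduli of stable bundles.
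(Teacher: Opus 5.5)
Your proposal is correct and follows essentially the same route as the paper: decompose the deformation complex at a fixed point into $\mathbb G_m$-weight pieces, and compute each piece by Riemann--Roch. This uses that the outer hypercohomology is one-dimensional of weight $0$, respectively vanishes when $\deg L>2g-2$. You then compare with the relative dimension $\binom{r}{2}\deg L+r(g-1)+1$.

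The only differences are two. First, you count the normal part $T^{>0}$ and subtract, whereas the paper sums the weights $\le 0$ directly. Second, you also spell out the converse ``some $r_i\ge 2\Rightarrow \phi^{r-1}=0$'', which the paper leaves implicit.

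One cosmetic slip: if $\phi$ raises the index, the weight-$w$ pieces should be indexed by $i-j=w$ (endomorphism part) and $i-j=w-1$ (Higgs part), as in the paper. With your $j-i$ indexing, $[\,\cdot\,,\phi]$ does not preserve weight. This merely transposes the summands, and since the degree terms cancel and the rank terms are symmetric, your count of $\dim T^{>0}$ is unaffected.
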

\begin{corollary}
    All components of $h_L^{-1}(0)$ are generically regular if $\deg L>2g-2$.
\end{corollary}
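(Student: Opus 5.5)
The plan is to compute $\dim S_m=\dim T_{M^L_{r,d},z_m}^{\le 0}$ directly at a fixed point $z_m=(E,\phi)\in Z_m$ via the deformation complex, and compare it with $\dim h_L^{-1}(0)$. The comparison should reduce to minimizing $\sum_i r_i^2$ over the ranks $r_i=\operatorname{rk}E_i$ of the weight decomposition.

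First I record the target dimension. Since $h_L$ is flat, $\dim h_L^{-1}(0)=\dim M^L_{r,d}-\dim B$. For $\deg L>2g-2$, Riemann--Roch gives $\dim B=\sum_{i=1}^r(i\deg L+1-g)$. The standard count $\dim M^L_{r,d}=r^2\deg L+1$ then yields
\[\dim h_L^{-1}(0)=\binom r2\deg L+r(g-1)+1,\]
consistent with \Cref{lem:rCgenus} for $N_{C/S}=L$.

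Next, the tangent space at $(E,\phi)$ is $\mathbb H^1$ of the complex $\underline{\operatorname{End}}(E)\xrightarrow{[\cdot,\phi]}\underline{\operatorname{End}}(E)\otimes L$. For stable pairs, $\mathbb H^0=\mathbb C$ (the scalars, of weight $0$). I claim $\mathbb H^2=0$: by Serre duality it is dual to the kernel of $[\phi,\cdot]$ on $H^0(\underline{\operatorname{End}}(E)\otimes K_CL^{-1})$. Such a kernel element gives a morphism of Higgs bundles $(E,\phi)\to(E\otimes K_CL^{-1},\phi)$ between stable pairs of the same slope minus $\deg L-(2g-2)>0$, so it must vanish. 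This is the same argument that makes $M^L_{r,d}$ smooth of the stated dimension.

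The $\mathbb G_m$-action on $E=\bigoplus E_i$ splits the complex by weight. The piece $\operatorname{Hom}(E_i,E_j)$ in degree $0$ has weight $i-j$, and $\operatorname{Hom}(E_i,E_j)\otimes L$ in degree $1$ has weight $i-j+1$ because $\phi$ has weight $1$. Hence the weight $\le 0$ part consists of $\operatorname{Hom}(E_i,E_j)$ for $i\le j$ in degree $0$ and $\operatorname{Hom}(E_i,E_j)\otimes L$ for $i<j$ in degree $1$. By the vanishing above,
\[\dim T^{\le 0}=1-\sum_{i\le j}\chi(\operatorname{Hom}(E_i,E_j))+\sum_{i<j}\chi(\operatorname{Hom}(E_i,E_j)\otimes L).\]
Applying Riemann--Roch termwise, and using $\chi(\operatorname{Hom}(E_i,E_j)\otimes L)-\chi(\operatorname{Hom}(E_i,E_j))=r_ir_j\deg L$ and $\chi(\operatorname{End}E_i)=r_i^2(1-g)$, this simplifies to
\[\dim S_m=1+(g-1)\sum_i r_i^2+\deg L\sum_{i<j}r_ir_j=1+\tfrac{r^2}{2}\deg L+\Bigl(g-1-\tfrac{\deg L}{2}\Bigr)\sum_i r_i^2 .\]
Since $\deg L>2g-2$, the coefficient of $\sum_i r_i^2$ is negative. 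So $\dim S_m$ is maximal exactly when $\sum r_i^2=r$, i.e.\ all $r_i=1$, and in that case it equals the target dimension computed above.

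Finally, I match this with generic regularity. If all $r_i=1$, then $k=r-1$ and each $\phi:E_i\to E_{i+1}\otimes L$ is nonzero, since otherwise a proper $\phi$-invariant summand would violate stability. Thus $\phi^{r-1}\neq0$, so the scheme-theoretic support is $rC$ and the point is generically regular. Conversely, if some $r_i\ge 2$, then $\phi^{k}=0$ with $k<r-1$, so the support is $(k+1)C$ and the point is irregular. The ranks are constant on the connected component $Z_m$, so this gives the equivalence. The corollary follows because $h_L^{-1}(0)^{red}=\bigsqcup S_m$, so every component is the closure of a top-dimensional $S_m$, and such an $S_m$ contains the generically regular points of $Z_m$; generic regularity is open.

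I expect the main obstacle to be the bookkeeping of weights and signs, namely the convention that $E_i$ has weight $-i$ under the contracting action and which pieces land in $T^{\le0}$, together with the rigorous vanishing of $\mathbb H^2$. If the sign convention turns out reversed, the same computation run on $T^{\ge0}$ gives the symmetric answer, so I would double-check it against the known case $r_i=1$.
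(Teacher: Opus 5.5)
Your proof is correct and follows the paper's argument. You split the deformation complex by $\mathbb G_m$-weight, sum Euler characteristics over nonpositive weights, and use the one-dimensional weight-$0$ automorphisms together with the vanishing of the obstruction space for $\deg L>2g-2$ to get $\dim S_m=1+\tfrac{r^2}{2}\deg L+\bigl(g-1-\tfrac{\deg L}{2}\bigr)\sum_i r_i^2$; the negative coefficient then forces equality with the relative dimension exactly when all $r_i=1$. Your Serre-duality justification of $\mathbb H^2=0$, the converse via $\phi^{k+1}=0$, and the argument that each component is the closure of a top-dimensional stratum containing $Z_m$ make explicit steps the paper leaves implicit.
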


\subsection{Tangent spaces and weights}\label{sec:tangents}

Let $(E, \phi)$ be a stable Higgs bundle as above. Let 
\[\mathbb{T}_{(E,\phi)}:= [\underline{\operatorname{End}}(E) \xrightarrow{[-,\phi]}\underline{\operatorname{End}}(E)\otimes L]\in D^{[-1,0]}_{\operatorname{Coh}}(C);\]
it is well known (see for example \cite{Biswas1994AnIS}) that the tangent space to $M_{r,d}^L$ at $(E,\phi)$ can be computed via the hypercohomology of $\mathbb{T}_{(E,\phi)}$:
\[T_{M_{r,d}^L, (E,\phi)} = \mathbb{H}^0(C, \mathbb{T}_{(E,\phi)}).\]
Suppose $(E,\phi)$ is additionally fixed by $\mathbb{G}_m$.
Since $(E,\phi)$ is stable (hence indecomposable), following \cite[\S3.1]{HH}, there is a unique decomposition of $E$ into weight spaces
\begin{equation}\label{eq:bundledecomposition}
    E = E_0 \oplus \cdots \oplus E_k, \qquad \operatorname{weight}(E_i) = w_0-i
\end{equation}
for some $w_0\in\mathbb Z$, with
\[0\neq \phi(E_i)\subset E_{i+1}\otimes L\]
for $0\le i\le k-1$.
Without loss of generality, we may assume $w_0=0$.

This induces a decomposition of $\mathbb{T}_{(E,\phi)}$ into eigenspaces:
\[\mathbb{T}_{(E,\phi),w}:= \left[\bigoplus_{i-j=w}\underline{\operatorname{Hom}}(E_i, E_j) \xrightarrow{[-,\phi]}\bigoplus_{i-j=w}\underline{\operatorname{Hom}}(E_i, E_{j+1})\otimes L\right] \in D_{\operatorname{Coh}}^{[-1,0]}(C).\]
Thus we get a decomposition of the tangent space into weight spaces:
\begin{equation}\label{eq:tangentdecomposition}
    (T_{M_{r,d}^L, (E,\phi)})_w = \mathbb{H}^0(C, \mathbb{T}_{(E,\phi),w})
\end{equation}
which can be nonzero only for $-k\le w \le k+1$. Denote by $r_i, d_i$ the rank and degree of $E_i$, respectively.
Note that if $r_i=1$ for all $i$, then $(E,\phi)$ must be generically regular.
Indeed, under the BNR correspondence, the sheaf it corresponds to restricts to the structure sheaf of $rC$ over the locus where all $E_i$ trivialize.

\begin{proof}[Proof of \Cref{prop:twistedstrata}]\label{pf:twistedstrata}
    First, note that $\chi(\mathbb{T}_{(E,\phi),w})$ equals
    \[-\sum_{i-j=w}(r_id_j - r_jd_i + r_ir_j(1-g)) + \sum_{i-j=w}(r_id_{j+1} - r_{j+1}d_i + r_ir_{j+1}(l + 1-g)),\]
    where $l=\deg L$.
    In particular, we obtain:
    \begin{align*}
        \sum_{w=-k}^0 \chi(\mathbb{T}_{(E,\phi),w}) &= -\sum_{i}(r_i^2(1-g)) + \sum_{i<j} r_ir_jl\\
        &= \frac{l}2r^2 - \left(\frac{l}2+1-g\right)\sum_ir_i^2.
    \end{align*}

    Since $(E,\phi)$ is stable, we know that
    \[\mathbb{H}^{-1}(C, \mathbb{T}) = \mathbb{H}^0(C, \mathcal H^{-1}(\mathbb{T})) \]
    is one-dimensional and has pure weight $0$.
    On the other hand,
    \[\dim\mathbb H^1(C, \mathbb{T}) = \dim\mathbb H^1(C, \mathcal H^0(\mathbb{T})) =\begin{cases} 1 & L = K_C\\ 0 & \deg L> 2g-2\end{cases}\]
    and $\mathbb H^1(C, \mathcal H^0(\mathbb{T}))$ has pure weight $1$.
    So if $(E,\phi)\in Z_m$, then we can compute
    \[\dim T_{S_m, (E,\phi)} = \sum_{w=-k}^0\dim \mathbb{H}^0(C, \mathbb{T}_{(E,\phi),w}) = \frac{l}2r^2 - \left(\frac{l}2+1-g\right)\sum_ir_i^2 + 1.\]

    When $l >2g-2$, this is bounded above by
    \[\binom{r}{2}l + r(g-1) + 1,\]
    and this bound is obtained when $r_i=1$ for all $i$.
    The very same number is also the relative dimension of $h_L$.
    Therefore $\dim S_m=\dim h_L^{-1}(0)$ precisely when $r_i=1$ for all $i$.
\end{proof}

The weights for the tangent spaces also provide another obstruction to the reducedness of irregular components:
\begin{corollary}\label{cor:hitchinirregnonred}
    When $L = K_C$, all irregular components are non-reduced.
\end{corollary}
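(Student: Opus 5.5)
The plan is to use the $\mathbb G_m$-weights at a fixed point to show that one coordinate function of $B$ vanishes to second order along an irregular stratum. Since $L=K_C$, \Cref{prop:lagrangianstrata} makes every $S_m$ Lagrangian, so $\dim S_m=\tfrac12\dim M^{K_C}_{r,d}=\dim h_L^{-1}(0)$. Hence the irreducible components of $h_L^{-1}(0)$ are exactly the closures $\overline{S_m}$.

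First I check which strata give irregular components. Irregularity is a closed $\mathbb G_m$-invariant condition. So if a generic point of $S_m$ is irregular, then every point of $S_m$ is, and so is its limit in $Z_m$. Points of $Z_m$ with all $r_i=1$ are generically regular, as noted before the proof of \Cref{prop:twistedstrata}. Therefore an irregular component $\overline{S_m}$ has some $r_i\ge 2$ at its fixed points, and so $k+1<r$. By the discussion around \eqref{eq:tangentdecomposition}, the weights of $T_{M,z}$ at $z\in Z_m$ lie in $[-k,k+1]$, so they are all at most $r-1$.

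Next I work in equivariant local coordinates. Fix $z\in Z_m$. Since $M=M^{K_C}_{r,d}$ is smooth and quasiprojective with a $\mathbb G_m$-action, Sumihiro's theorem and linearization of the action at a fixed point give a $\mathbb G_m$-equivariant étale neighbourhood $(V,z)\to (T_{M,z},0)$. In it we get coordinates $x_1,\dots,x_a$ of weight $\le 0$ and $y_1,\dots,y_b$ of weights in $[1,k+1]$. By \eqref{eq:GITtangents}, $S_m\cap V$ is cut out by the $y_j$. This step requires matching the sign conventions of the contracting action on $M$ with the action on functions, and it is the main technical point to get right. Now let $a_r$ be a linear coordinate on the summand $H^0(C,K_C^{\otimes r})$ of $B$; it is nonzero on that summand since $g\ge2$. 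Then $f=h_L^\ast a_r$ is a weight-$r$ function vanishing on $S_m$. Expand $f$ in the coordinates. A term linear in the $y$'s looks like $y_j g(x)$, and its weight is at most $k+1<r$ because the $x$'s contribute weights $\le 0$. So no such term can occur, and $f\in (y_1,\dots,y_b)^2=I_{S_m}^2$ on $V$.

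Finally I conclude non-reducedness. The open set $V\cap S_m$ is nonempty, so it contains a generic point $x$ of the irreducible $S_m$. Suppose $\overline{S_m}$ were a reduced component. Then $h_L^{-1}(0)$ would be smooth at $x$ with ideal $I_{S_m}$, so $h_L$ would be smooth at $x$ and $dh_L$ would be surjective there. But $df$ vanishes along $S_m$, so the image of $dh_L$ at $x$ misses the nonzero direction dual to $a_r$ in $T_0B$. This contradiction shows that every irregular component is non-reduced.
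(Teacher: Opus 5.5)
Your proposal is correct and follows essentially the paper's argument: at a fixed point $z\in Z_m$ with some $r_i\ge 2$, the conormal weights of $S_m$ are bounded by $k+1<r$, so the pulled-back linear coordinates on $H^0(C,K_C^{\otimes r})$ lie in $I_{S_m}^2$. The differences are minor: you conclude by noting that $dh_L$ would have to be surjective at a smooth point of the fiber, where the paper uses its regular-sequence lemma (the chain $I_Z\supsetneq I_Z^2+(f_2',\dots,f_k')\supset(f_1',\dots,f_k')$), and you spell out more explicitly than the paper why irregular components are exactly the closures of strata whose fixed points have some $r_i\ge 2$.
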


\begin{proof}
    First, note that $h_L^{-1}(0)$ is lci and cut out by functions 
    \[(f_{i,j}),1\le i\le r,1\le j \le h^0(L^{\otimes i})\]
    pulled back from $B$, where we may pick $f_{ij}$ to be homogeneous functions for the $\mathbb{G}_m$-action of weight $-i$. 
    
    Fix a point $z\in (M_{r,d}^L)^{\mathbb{G}_m}$, lying in a stratum $S$, with eigenspace decomposition as in \Cref{eq:bundledecomposition} such that there is some $i$ with $r_i\neq 1$.
    Let $R = \mathcal O_{M_{r,d}^L, z}$; this admits a $\mathbb{G}_m$-action, such that the ideal for $S$, which we denote by $I_S\subset R$, is cut out by the ideal generated by all homogeneous functions with negative weights.
    The weights for generators of such an ideal are given by the weight decomposition of the conormal
    \[N_{S_m/M_{r,d}^L, z}^{\vee} = (T_{M_{r,d}^L, z}^{>0})^{\vee}.\]
    In particular, the lowest $\mathbb{G}_m$-weight is $-k-1$.
    
    It follows that any homogeneous function $f\in R$ of weight at most $-k-2$ lies in $I_S^2$.
    In particular, since $k<r-1$, the $f_{r, j}$ all lie in $I_S^2$, and the non-reducedness of $h_L^{-1}(0)$ along $S$ reduces to the lemma below. 
\end{proof}

\begin{lemma}
    Let $X$ be a smooth scheme and $Z\subset Z'\subset X$ subschemes of the same dimension.
    Assume that both $Z$ and $Z'$ are local complete intersections in $X$, and that $Z'$ is cut out by the regular sequence $f_1',\dots, f_k'$.
    Suppose further that $Z$ is irreducible, and cut out by the ideal $I_Z$. If any of the $f_i'$ lie in $I_Z^2$, then the multiplicity of $Z'$ along $Z$ is at least $2$. 
\end{lemma}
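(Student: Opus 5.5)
The plan is to reduce everything to a statement about a regular local ring by localizing at the generic point of $Z$. Let $\eta$ be the generic point of $Z$ and set $A=\mathcal O_{X,\eta}$. This is a regular local ring of dimension $c=\operatorname{codim}(Z,X)$. Since $Z$ and $Z'$ have the same dimension and $Z\subset Z'$, the component of $Z'$ containing $Z$ has generic point $\eta$. The multiplicity of $Z'$ along $Z$ is the length $\ell_A(A/(f_1',\dots,f_k'))$; here $k=c$, because $Z'$ is cut out by a regular sequence and has codimension $c$.

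Both ideals are $\mathfrak m_A$-primary. Since $Z$ is lci, irreducible and of codimension $c$, the ideal $I:=I_ZA$ is generated by a regular sequence $g_1,\dots,g_c$. Write $J:=(f_1',\dots,f_c')A\subset I$. Then $A/J\twoheadrightarrow A/I$, so $\ell(A/J)\ge\ell(A/I)\ge 1$. The strategy is to show that this inequality is strict, i.e.\ $J\neq I$. If $Z$ is generically reduced, as in our application where $Z=S_m$ is smooth, then $\ell(A/I)=1$ and strictness gives the required bound $\ell(A/J)\ge 2$.

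To prove $J\neq I$, suppose instead $J=I$. Since $I$ is generated by a regular sequence of length $c$, the conormal module $I/I^2$ is free of rank $c$ over the nonzero Artinian local ring $A/I$. The images $\bar f_1',\dots,\bar f_c'$ would generate $I/I^2$. By Nakayama's lemma, $c$ generators of a free module of rank $c$ over a local ring form a basis. But by hypothesis some $f_i'\in I_Z^2$, so $\bar f_i'=0$ in $I/I^2$. A basis cannot contain $0$ over a nonzero ring, which is a contradiction. Hence $J\subsetneq I$ and $\ell(A/J)>\ell(A/I)$.

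The main obstacle is interpreting "multiplicity at least $2$" when $Z$ is not generically reduced. In that case one wants $\ell(A/J)\ge 2\,\ell(A/I)$ rather than mere strict inequality. For this I would write $f'=Mg$ with $M\in\operatorname{Mat}_c(A)$. Choosing the row for $f_i'$ with entries in $I$, the determinant $\det M$ then lies in $I$. The standard linkage formula for $\mathfrak m$-primary complete intersections, $\ell(A/J)=\ell(A/I)+\ell\bigl(A/(J:I)\bigr)$ with $J:I=J+(\det M)$, gives $\ell(A/(J:I))\ge\ell(A/(J+I))=\ell(A/I)$, since $J+(\det M)\subset I$. Together these give $\ell(A/J)\ge 2\,\ell(A/I)$. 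In the application to \Cref{cor:hitchinirregnonred}, however, the simpler generically reduced case already suffices.
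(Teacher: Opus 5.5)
Your argument is correct and is essentially the paper's. The paper's chain $I_Z\supsetneq I_Z^2+(f_2',\dots,f_k')\supset(f_1',\dots,f_k')$ rests on the same fact you use: at the generic point of $Z$, $I/I^2$ is free of rank $c$, so $J\neq I$ and $\ell(A/J)>\ell(A/I)$.

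Note that $\ell(A/J)>\ell(A/I)\ge 1$ already gives $\ell(A/J)\ge 2$ even when $Z$ is not generically reduced, so the linkage step is not needed for the lemma as stated. That step (via $J:I=J+(\det M)$ and Gorenstein duality for $A/J$) is nonetheless correct and proves the stronger relative bound $\ell(A/J)\ge 2\,\ell(A/I)$.
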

\begin{proof}
    Suppose $f_1'\in I_Z^2$. Then the chain of ideals $I_Z\supsetneq I_Z^2 + (f_2', \dots, f_k')\supset (f_1', \dots, f_k')$ shows the result.  
\end{proof}

\begin{remark}
    In \cite[Theorem 5.2]{HH}, it is shown how to compute the multiplicity of any such component which admits a very stable Higgs bundle.
    The arguments above can be seen as an extension of the techniques used there, which allows us to access multiplicities even when there is no very stable Higgs bundle. 
\end{remark}

\section{Multiplicities for generically regular components}\label{sec:gregmult}

\subsection{Stable Pairs}

Let $S$ be a smooth quasiprojective surface, and $\beta\in H_2(S;\mathbb Z)$ a curve class.
As in \Cref{ss:moduliofsheaves}, we denote by $M_S(\beta,\chi)$ the moduli of Gieseker-stable pure one-dimensional sheaves on $S$ with proper Fitting support in $\beta$ and Euler characteristic $\chi$.

Like before, we let $|\beta|$ be the Hilbert scheme of curves in $S$ of class $\beta$, and $\mathcal C_{\beta}\to |\beta|$ the universal family of curves.
Let $g_{\beta}$ be its genus.
Denote by
\[h:M_S(\beta,\chi) \rightarrow |\beta|\]
the map sending a sheaf $\mathcal F$ to its Fitting support.

Tensoring with the polarization does not change the geometry of this fibration, so one may feel free to increase $\chi$ without changing $(\chi\bmod\deg\beta)$ until it is large enough for our purpose.

As in the formulation in \cite{PT}, there is a moduli space $P_S(\beta, \chi)$ of stable pairs on $S$.
These are pairs $(\mathcal F,s)$ where $\mathcal F$ is a pure sheaf with the prescribed numerical condition, and $s\in\Gamma(\mathcal F)$ has zero-dimensional cokernel.
\begin{theorem}[{\cite[Proposition B.8]{PT}}]
    If $\chi \ge 1-g_\beta$ where $g_\beta$ is the genus of $\beta$, then $P_S(\beta,\chi)$ is isomorphic to the relative Hilbert scheme $\operatorname{Hilb}^{\chi+g_\beta-1}_{\mathcal C_\beta/|\beta|}$.
\end{theorem}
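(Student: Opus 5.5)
The plan is to identify both sides as solutions of the same moduli problem, i.e. to build mutually inverse natural transformations between the functor of flat families of stable pairs and the relative Hilbert scheme functor of $\mathcal C_\beta/|\beta|$. Set $n=\chi+g_\beta-1\ge 0$.

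\emph{From Hilbert schemes to pairs.} A point of $\operatorname{Hilb}^n_{\mathcal C_\beta/|\beta|}$ over a base $T$ consists of a family of curves $\mathcal C_T\subset S\times T$ of class $\beta$ together with a $T$-flat subscheme $Z\subset\mathcal C_T$ finite of degree $n$. To it I associate the dual of the ideal sheaf, $\mathcal F=\mathcal{H}om_{\mathcal O_{\mathcal C_T}}(I_Z,\mathcal O_{\mathcal C_T})$, which makes sense because $\mathcal C_T$ is Gorenstein (a Cartier divisor in a smooth surface times $T$). The section $s$ is dual to the inclusion $I_Z\hookrightarrow\mathcal O_{\mathcal C_T}$. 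Next I check the following:
\begin{enumerate}
\item Fiberwise, $\mathcal F$ is pure of dimension one.
\item Its Fitting support is the curve $\mathcal C_t$.
\item We have $\chi(\mathcal F)=\chi(\mathcal O_{\mathcal C})+n=1-g_\beta+n=\chi$, using Serre duality on the Gorenstein curve and the identification $\operatorname{coker}s\cong\mathcal Ext^1(\mathcal O_Z,\mathcal O_{\mathcal C})$, which has length $n$.
\end{enumerate}
Flatness over $T$ and compatibility with base change follow because $I_Z$ is $T$-flat with fibers maximal Cohen–Macaulay on the Gorenstein curves, so duality commutes with base change.

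\emph{From pairs to Hilbert schemes.} Given a pair $(\mathcal F,s)$, let $C=\operatorname{supp}\mathcal F$ be the Fitting support. The key local fact I want is that $s:\mathcal O_C\to\mathcal F$ is injective with finite cokernel. To prove it, note that $s$ factors through $\mathcal O_{\operatorname{supp}}$, and that on the open set where $\operatorname{coker}s=0$ the sheaf $\mathcal F$ is cyclic, so there its Fitting support and its scheme-theoretic support agree and $s$ is an isomorphism there. Purity of $\mathcal F$, together with $\mathcal O_C$ having no embedded points, then gives injectivity of $\mathcal O_C\to\mathcal F$ globally. I then dualize: $\mathcal F^\vee\hookrightarrow\mathcal O_C$ is an ideal sheaf of a finite subscheme $Z$ of length $\chi(\mathcal F)-\chi(\mathcal O_C)=n$, and reflexivity of pure sheaves on Gorenstein curves, $\mathcal F^{\vee\vee}=\mathcal F$, shows the two constructions are inverse. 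In families one uses relative duality $\mathbf R\mathcal Hom(-,\mathcal O_{\mathcal C_T})$ and the fact that fiberwise vanishing of $\mathcal Ext^1$ gives flatness.

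\emph{Main obstacle and hypothesis.} I expect the main obstacle to be the injectivity of $\mathcal O_C\to\mathcal F$, where $C$ is the Fitting support rather than the scheme-theoretic support, on nonreduced curves such as $rC$. To handle it I would argue locally: at the generic point of each component the cokernel vanishes, so $\mathcal F$ is cyclic there, equal to $\mathcal O/\operatorname{Ann}$, and for a cyclic module the Fitting ideal equals the annihilator. This yields an isomorphism at generic points, and purity then spreads injectivity everywhere. The hypothesis $\chi\ge1-g_\beta$ is exactly $n\ge0$, which is needed for both sides to be nonempty and to match.
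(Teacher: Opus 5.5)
Your proposal is correct and follows essentially the argument behind the cited result. The paper gives no proof of its own and defers to Pandharipande--Thomas, who identify a stable pair $\mathcal O_C\xrightarrow{s}\mathcal F$ supported on a Cartier (hence Gorenstein) curve $C\subset S$ with the ideal $\mathcal F^\vee\hookrightarrow\mathcal O_C$ of a length-$n$ subscheme via exactly this duality, and then run it in families over $|\beta|$. The one step you leave implicit is routine: for a $T$-flat family of pairs, the Fitting support is a relative effective Cartier divisor (locally the determinant of a fiberwise-injective square presentation), so it does give a $T$-point of $|\beta|$.
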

Going forward, we shall set $n=\chi+g_\beta-1$.
When we say something is true for $n\gg0$ or $\chi\gg 0$, we will mean that it is true after replacing $\chi$ with a higher value without changing $(\chi\bmod\deg\beta)$.

Let $\mathring{P_S}(\beta,\chi) \subset P_S(\beta,\chi)$ be the open subscheme of stable pairs whose underlying sheaf is also stable.
There is a forgetful map $f: \mathring P_S(\beta,\chi)\rightarrow M_S(\beta,\chi)$.
\begin{proposition}\label{prop:paircover}
    For $\chi\gg0$, the forgetful map $f$ is smooth and geometrically connected, and its image contains all stable sheaves whose scheme-theoretic support and Fitting support agree.
\end{proposition}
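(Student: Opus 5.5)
The plan is to identify the fiber of $f$ over a stable sheaf $\mathcal F$ with an open subset of the projective space $\mathbb P(H^0(\mathcal F))$, and to use vanishing of $H^1(\mathcal F)$ for $\chi\gg0$ to make $f$ a projective bundle over an open subset of $M_S(\beta,\chi)$.

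\textbf{Fibers of $f$.} The fiber over $[\mathcal F]$ consists of sections $s\in H^0(\mathcal F)$ with zero-dimensional cokernel, taken up to scaling by $\operatorname{Aut}(\mathcal F)=\mathbb C^\ast$, which holds because $\mathcal F$ is stable and hence simple. This is an open subset of $\mathbb P(H^0(\mathcal F))$, so it is geometrically connected (and irreducible) whenever it is nonempty.

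\textbf{Vanishing of $H^1$ and smoothness.} By boundedness of semistable sheaves with fixed numerical data, there is a uniform bound in the range $\chi\gg0$ after twisting by a high power of the polarization. Indeed, for $\chi\gg0$ (twisting by $\mathcal O(m)$ with $m\gg0$) every $\mathcal F\in M_S(\beta,\chi)$ satisfies $H^1(\mathcal F)=0$ and is globally generated. Then $h^0(\mathcal F)=\chi$ is constant. Working étale locally on $M$ where a universal sheaf $\mathbb F$ exists, the pushforward $\operatorname{pr}_\ast\mathbb F$ is locally free of rank $\chi$ and commutes with base change. The Brauer twist cancels on the projectivization, so $\mathbb P(\operatorname{pr}_\ast\mathbb F)$ descends to $M$. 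The pair space $\mathring P_S(\beta,\chi)$ is the open locus in this projective bundle where the section has zero-dimensional cokernel; I would check this identification by comparing functors of points, using that the condition on the cokernel is open in flat families. Hence $f$ is the restriction of a smooth projective bundle to an open subset, and is therefore smooth with geometrically connected fibers. Alternatively, smoothness can be checked by the deformation theory of pairs, where the obstruction to lifting $s$ lies in $H^1(\mathcal F)=0$.

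\textbf{Image.} Let $\mathcal F$ be a stable sheaf whose scheme-theoretic support $D$ equals its Fitting support. I need a section $s$ whose cokernel is zero-dimensional, i.e.\ $s:\mathcal O_D\to\mathcal F$ should be an isomorphism at the generic points of $D$, and we may take $s$ injective since $\mathcal F$ is pure. At each generic point $\eta$ of a component of $D$ with multiplicity $r_\eta$, the local ring $\mathcal O_{D,\eta}$ is Artinian of length $r_\eta$. The stalk $\mathcal F_\eta$ is a faithful $\mathcal O_{D,\eta}$-module, since the scheme support is $D$, and it has length $r_\eta$, since the Fitting support is $D$. I claim such a module is cyclic; granting this, $\mathcal F_\eta\cong\mathcal O_{D,\eta}$. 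This is the step I expect to be the main obstacle. For a planar curve singularity, $\mathcal O_{D,\eta}$ is a quotient of a DVR-over-a-field-type regular local ring of dimension one (the localization of $S$ at a curve), so $\mathcal O_{D,\eta}\cong A/(\pi^{r})$ with $A$ a DVR. Modules over $A/(\pi^r)$ are sums of $A/(\pi^{a_i})$, and faithfulness forces some $a_i=r$, while the length constraint forces that to be the only summand. Given cyclicity, the generators at the finitely many generic points are a finite set of conditions; since $\mathcal F(m)$ is globally generated for the chosen twist, a general global section generates every $\mathcal F_\eta$ by a Baire/open-condition argument in $H^0$. Such a section has zero-dimensional cokernel, so $(\mathcal F,s)$ lies in $\mathring P_S(\beta,\chi)$ and $\mathcal F$ is in the image of $f$.
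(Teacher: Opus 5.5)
Your proposal is correct and follows the paper's argument. For $\chi\gg0$, global generation and vanishing $H^1$ make $\mathring P_S(\beta,\chi)$ an open subset of the projectivized pushforward of the universal sheaf, and the image statement comes from a section that is an isomorphism $\mathcal O_D\to\mathcal F$ at the generic points of $D$, with the Fitting-support condition supplying the needed length equality. Your cyclicity argument over $A/(\pi^r)$ and your étale-local treatment of the universal sheaf only spell out details the paper leaves implicit.
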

\begin{proof}
    Assume, without loss of generality, that $\chi$ is large enough such that all stable sheaves of class $\beta$ and Euler characteristic $\chi$ are globally generated and have vanishing higher cohomologies.

    By construction $\mathring P_S(\beta,\chi)$ is an open in a projective bundle over $M_S(\beta,\chi)$.
    Indeed, if $\mathcal F^{\rm univ}$ is the universal sheaf on $S\times M_{S}(\beta,\chi)$, then its pushforward to this moduli space is a vector bundle, the projectivization of which contains $\mathring P_S(\beta,\chi)$ as an open.

    Now suppose $\mathcal F\in\operatorname{Coh}(S)$ is any globally-generated pure one-dimensional sheaf whose scheme-theoretic support and Fitting support agree.
    Denote its support by $\widetilde C\hookrightarrow S$.
    Then there is a section $\mathcal O_{\widetilde C}\to\mathcal  F$ which is injective over a dense open of $\widetilde{C}$.
    The cokernel must have dimension $0$ as $\widetilde{C}$ is also the Fitting support, so this gives a stable pair.
\end{proof}

\begin{corollary}\label{cor:hilbcover}
    Let $C$ be a curve of class $\beta$, corresponding to a point $b\in |\beta|$. Then for $n\gg0$, there is an open $\mathring{\operatorname{Hilb}}^n_C\subset \operatorname{Hilb}^n_C$ and a smooth map $f:\mathring{\operatorname{Hilb}}^n_C \rightarrow h^{-1}(b)$.
\end{corollary}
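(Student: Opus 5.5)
The plan is to restrict the map of \Cref{prop:paircover} to the fiber over $b$ and check that smoothness survives the base change. By the theorem of \cite{PT} quoted above, for $\chi\ge 1-g_\beta$ we have $P_S(\beta,\chi)\cong\operatorname{Hilb}^n_{\mathcal C_\beta/|\beta|}$ with $n=\chi+g_\beta-1$. I would first check that this isomorphism is compatible with the natural maps to $|\beta|$. On the pair side the map sends $(\mathcal F,s)$ to the Fitting support of $\mathcal F$. On the Hilbert side it is the structure map of the relative Hilbert scheme. Under the isomorphism, a pair $(\mathcal F,s)$ with support curve $D$ corresponds to the subscheme of $D$ cut out by the cokernel data of $\mathcal O_D\to\mathcal F$. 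Here $D$ is the Fitting support, because $s$ is generically an isomorphism and the cokernel is zero-dimensional. So the two maps to $|\beta|$ agree.

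Next I take the fiber over $b$. The fiber of $\operatorname{Hilb}^n_{\mathcal C_\beta/|\beta|}\to|\beta|$ at $b$ is $\operatorname{Hilb}^n_C$, since relative Hilbert schemes commute with base change. Define
\[\mathring{\operatorname{Hilb}}^n_C:=\mathring P_S(\beta,\chi)\times_{|\beta|}\{b\}.\]
This is open in $\operatorname{Hilb}^n_C$ because $\mathring P_S(\beta,\chi)$ is open in $P_S(\beta,\chi)$.

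The forgetful map $f:\mathring P_S(\beta,\chi)\to M_S(\beta,\chi)$ commutes with the support maps to $|\beta|$, because forgetting the section does not change $\mathcal F$ or its Fitting support. So $f$ is a morphism over $|\beta|$. By \Cref{prop:paircover}, for $\chi\gg0$ it is smooth. Smoothness is stable under base change, and base-changing along $\{b\}\to|\beta|$ gives a smooth map
\[f:\mathring{\operatorname{Hilb}}^n_C=\mathring P_S\times_{|\beta|}\{b\}\to M_S(\beta,\chi)\times_{|\beta|}\{b\}=h^{-1}(b).\]
Here I use $\mathring P_S\times_{M}(M\times_{|\beta|}\{b\})=\mathring P_S\times_{|\beta|}\{b\}$, which holds because the structure map of $\mathring P_S$ to $|\beta|$ factors through $M$.

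The main point to verify carefully is the first step: that the PT isomorphism is an isomorphism over $|\beta|$ scheme-theoretically, and not only on points. This matters because $h^{-1}(b)$ is non-reduced, and the whole purpose of the corollary is to transfer scheme structure. I would argue at the level of families: for a flat family of pairs over a base $T$, the Fitting support of the universal sheaf and the support of the universal curve in the relative Hilbert scheme coincide as $T$-flat families of curves. This is how the \cite{PT} construction identifies the two. Once this is in place, every other step is formal.
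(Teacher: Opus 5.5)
Your proposal is correct and is essentially the argument the paper intends (it gives no separate proof): base-change the smooth forgetful map of \Cref{prop:paircover} along $b\hookrightarrow|\beta|$, using the \cite{PT} identification $P_S(\beta,\chi)\cong\operatorname{Hilb}^n_{\mathcal C_\beta/|\beta|}$ over $|\beta|$, so that the fiber of $\mathring P_S(\beta,\chi)$ over $b$ is an open subscheme of $\operatorname{Hilb}^n_C$. Your additional check that this identification is compatible with the Fitting-support maps in families is the point the paper leaves implicit, and your justification of it is sound.
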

\subsection{Nowhere-reducedness of Hilbert schemes}
We say a scheme is nowhere reduced if its smooth locus is empty.
\begin{lemma}\label{lem:hilbetale}
    Suppose $C,C'$ are smooth curves on smooth quasiprojective surfaces $S,S'$.
    Then $\operatorname{Hilb}^n_{rC}$ is nowhere reduced if and only if $\operatorname{Hilb}^n_{rC'}$ is.
\end{lemma}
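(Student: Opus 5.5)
The approach is to show that both Hilbert schemes are étale-locally modeled on the same schemes: the punctual Hilbert schemes of a single fixed local model of $rC$ at a point. Nowhere-reducedness, meaning an empty smooth locus, is étale-local and can be checked on a cover by étale neighborhoods, so this will suffice.

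\textbf{Local model.} Since $C\subset S$ is a smooth curve on a smooth surface, each point $p\in C$ has a Zariski neighborhood $V\subset S$ with an étale map $(x,y):V\to\mathbb A^2$ such that $C\cap V=\{y=0\}$ scheme-theoretically. Pulling back the ideal $(y^r)$ shows that $rC\cap V\to\operatorname{Spec}k[x,y]/(y^r)=:\Gamma_r$ is étale. The same construction applies to $C'\subset S'$ with the same model $\Gamma_r$.

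\textbf{Reduction to points.} I will use the standard decomposition of the Hilbert scheme of points near a subscheme $\xi$ whose support is a finite set of points $p_1,\dots,p_m$ with lengths $n_1,\dots,n_m$. Near $[\xi]$, the scheme $\operatorname{Hilb}^n_{rC}$ is étale-locally isomorphic to $\prod_i\operatorname{Hilb}^{n_i}_{rC}$, with each factor taken near the part of $\xi$ at $p_i$. This holds on the open locus of disjoint supports, via the étale map $\prod_i\operatorname{Hilb}^{n_i}\supset U_{\text{disj}}\to\operatorname{Hilb}^n$. Next, if $X\to Y$ is étale and $x\mapsto y$ induces an isomorphism of residue fields, then $\operatorname{Hilb}^{n}_X$ near subschemes supported at $x$ is étale over $\operatorname{Hilb}^n_Y$ near subschemes supported at $y$. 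For this I would use that étale maps induce étale maps on Hilbert schemes over the open set of subschemes mapping isomorphically onto their images, where the Hilbert schemes of points of étale-related schemes agree; this is a standard fact, and I could cite it or prove it via infinitesimal lifting. Combining the two steps, every point of $\operatorname{Hilb}^n_{rC}$ has an étale neighborhood that is also an étale neighborhood of a point of $\prod_i\operatorname{Hilb}^{n_i}_{\Gamma_r}$. Here $\Gamma_r$ is homogeneous along the $x$-direction under translation, so the location of the support points does not matter.

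\textbf{Conclusion.} The same holds for $rC'$. Moreover, every configuration of lengths $(n_i)$ with distinct support points occurs for both $rC$ and $rC'$, since both curves are positive-dimensional. Hence the sets of étale-local models arising for $\operatorname{Hilb}^n_{rC}$ and $\operatorname{Hilb}^n_{rC'}$ coincide. A scheme has a smooth point if and only if some étale-local model has one. Therefore $\operatorname{Hilb}^n_{rC}$ has a smooth point if and only if $\operatorname{Hilb}^n_{rC'}$ does. One detail remains: I need every local model at a configuration to be realized near some point, not merely to match pointwise. Since the model depends only on the partition $(n_i)$ and on the germs of the components at the $\Gamma_r$ points, and translation in $x$ identifies all such germs, this follows. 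The main obstacle is making the étale-invariance of Hilbert schemes of points rigorous. I would handle it by citing that for $X\to Y$ étale, the induced map from the open subscheme of $\operatorname{Hilb}^n_X$ where $\xi\to Y$ is a closed immersion to $\operatorname{Hilb}^n_Y$ is étale.
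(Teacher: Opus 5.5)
Your proposal is correct and uses the same idea as the paper. The germ of $\operatorname{Hilb}^n_{rC}$ at $Z$ depends only on the local model $\{y^r=0\}$ near the finitely many points of $Z^{\rm red}$; this model is the same for every smooth curve on a smooth surface, and any number of support points can be realized on $C'$.

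The only difference is where you localize. The paper works analytically: it takes $U$ to be a disjoint union of small polydiscs around $Z^{\rm red}$, so that $\operatorname{Hilb}^n_{rC\cap U}\cong\operatorname{Hilb}^n_{rC'\cap U'}$ as open pieces of the Douady spaces. That makes your product decomposition over support points and the \'etale-invariance of Hilbert schemes of points unnecessary. Your \'etale version stays algebraic, at the cost of invoking those two standard facts.
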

In fact, the proof will show that the lists of multiplicities appearing in these Hilbert schemes coincide.
\begin{proof}
    We work in the analytic category, thus identifying Hilbert schemes as Douady spaces.
    The relevant references can be found in \cite{dCM_douady}.

    Suppose $Z\in\operatorname{Hilb}_{rC}^n$ is a finite subscheme.
    Take a small neighborhood $U$ of $Z^{\rm red}$ in $S$.
    Then $\operatorname{Hilb}_{rC\cap U}^n$ is a neighborhood of $Z$ in $\operatorname{Hilb}_{rC}^n$.

    The locus $\operatorname{Hilb}_{rC\cap U}^n\subset\operatorname{Hilb}_U^n$ is cut out by the condition $f^r|_Z=0$, where $Z$ is a length-$n$ subscheme of $U$, and $f$ is a local function whose vanishing defines $C$.

    Choose any $|Z^{\rm red}|$ many disjoint points on $C'$ and take an open neighborhood $U'$ of these points in $S'$.
    The characterization above then shows that, up to shrinking $U,U'$ further, we have $\operatorname{Hilb}_{rC\cap U}^n\cong\operatorname{Hilb}_{rC'\cap U'}^n$.

    Therefore the Hilbert schemes $\operatorname{Hilb}^n_{rC},\operatorname{Hilb}^n_{rC'}$ admit open covers with isomorphic members.
    The claim follows.
\end{proof}

So to compute the multiplicities of $\operatorname{Hilb}_{rC}^n$, it suffices to assume $C=\mathbb A^1$ is a line in $\mathbb A^2$.
In this case, \cite{luan} showed that the irreducible components of $\operatorname{Hilb}_{rC}^n$ are indexed by partitions of the form
\[n=m_1+\cdots+m_k,m_i\le r\]
where a generic point of the component corresponding to $m_1,\ldots,m_k$ is a subscheme of the form $Z=Z_1\sqcup\cdots\sqcup Z_k$ with $\operatorname{length}Z_i=m_i$.
\begin{theorem}[\cite{luan}]
    The multiplicity of the component corresponding to the partition $m_1,\ldots,m_k$ is at least
    \[
    \prod_i(r-m_i+1).
    \]
\end{theorem}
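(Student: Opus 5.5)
The plan is to reduce the statement, in three steps, to a length computation on a one-dimensional fat point.

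\emph{Step 1: localize.} A generic point of the component indexed by $m_1,\ldots,m_k$ is $Z=Z_1\sqcup\cdots\sqcup Z_k$ with pairwise disjoint supports. Near such a $Z$, $\operatorname{Hilb}^n_{rC}$ is locally (analytically, as in the proof of \Cref{lem:hilbetale}) the product $\prod_i\operatorname{Hilb}^{m_i}_{rC\cap U_i}$ for small disjoint neighborhoods $U_i$ of the points of $Z^{\rm red}$. The component is locally the product of the components $W_{m_i}\subset\operatorname{Hilb}^{m_i}_{rC}$ whose generic member is a length-$m_i$ subscheme supported at a single point. The length of the local ring at the generic point of a product is the product of the lengths. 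It therefore suffices to show that $W_m$ has multiplicity at least $r-m+1$ for each $m\le r$.

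\emph{Step 2: coordinates and a transverse slice.} Work with $C=\{y=0\}\subset\mathbb A^2$. A generic member of $W_m$ is curvilinear, with ideal $(y-q(x),f(x))$ where $f=(x-t)^m$ and $q$ has degree $<m$, $q(t)=0$ and $q'(t)\neq0$. Near such a point, $\operatorname{Hilb}^m_{\mathbb A^2}$ has local coordinates given by the coefficients of $(q,f)$, with $f$ monic of degree $m$. Inside this chart, $\operatorname{Hilb}^m_{rC}$ is cut out by the $m$ coefficients of $q^r \bmod f$. This is codimension $m$ in a smooth $2m$-dimensional space, and $\dim W_m=m$, so $\operatorname{Hilb}^m_{rC}$ is lci, hence Cohen--Macaulay, near this point. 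For a Cohen--Macaulay scheme, the multiplicity of a component equals the length of its intersection with a general smooth slice transverse to it, at a point of the component. Even without choosing the slice carefully, any closed subscheme supported at the point whose ideal contains the image of a slice's ideal gives a lower bound on the length. So I will take the slice fixing $t$ and the higher coefficients of $q$, and look for a curve inside it along which the local ring has a large quotient.

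\emph{Step 3: change variable and compute.} Since $q'(t)\neq0$, $w:=q(x)$ is a local coordinate in $x$ near $t$. The condition $y^r\in(y-q,f)$ becomes $w^r\in(g(w))$, where $g$ is $f$ rewritten as a degree-$m$ polynomial in $w$. Because $m\le r$, this condition says $g\mid w^r$. So locally the slice is the scheme of monic degree-$m$ polynomials $g$ near $w^m$ dividing $w^r$, i.e.\ the punctual part of $\operatorname{Hilb}^m(\operatorname{Spec}k[w]/(w^r))$ at the point $w^m$.

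Restrict to the line $g_a=(w-a)w^{m-1}$ in the space of monic polynomials. On this line, $g_a\mid w^r$ if and only if $(w-a)\mid w^{r-m+1}$, which holds if and only if $a^{r-m+1}=0$. Thus the local ring of the slice surjects onto $k[a]/(a^{r-m+1})$, so its length is at least $r-m+1$. Combining with Steps 1 and 2 gives the bound $\prod_i(r-m_i+1)$.

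The main obstacle is Step 2: justifying that the multiplicity of the component is bounded below by the length of this particular slice. This needs the Cohen--Macaulay (lci) property together with the slice being transverse to $W_m$ at a generic point, and it requires checking that the reparametrization $w=q(x)$ is compatible with the chosen slice. I would first verify transversality by a dimension count at the generic point. If that is problematic, I would instead compute the multiplicity directly as a Hilbert--Samuel multiplicity along the one-parameter family $g_a$, varying $t$ and the higher coefficients of $q$ freely.
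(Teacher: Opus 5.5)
Your argument is correct, and it takes a genuinely different route, because the paper gives no proof here: it cites \cite{luan} and only remarks that Luan's argument yields this lower bound rather than the claimed equality.

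You instead give a self-contained local computation in four steps:
\begin{enumerate}
\item a product decomposition reduces everything to the one-point components $W_m$;
\item $\operatorname{Hilb}^m_{rC}$ is lci in the chart $(y-q(x),f(x))$, so the multiplicity equals the length of a transverse slice;
\item the slice is identified with $\operatorname{Hilb}^m$ of the fat point $\operatorname{Spec}k[w]/(w^r)$;
\item a test line bounds the length from below.
\end{enumerate}
The Cohen--Macaulay step is genuinely needed. Without it, the slice length only bounds the multiplicity from above, which is the wrong direction for you.

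The obstacles you flag are routine. First, $\{q=q_0\}$ is transverse to $W_m$: a tangent vector to $W_m$ with $\dot q=0$ has $q_0'(t_0)\dot t=-\dot q(t_0)=0$, hence $\dot t=0$ and $\dot f=0$. Second, you can skip the substitution $w=q(x)$. Write $q_0=(x-t_0)u(x)$ with $u(t_0)\neq 0$. By Nakayama, $u$ is a unit in $A[x]/(f)$ for $f$ near $(x-t_0)^m$, so the slice is directly $\{f: f\mid (x-t_0)^r\}$.

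Your route also gives more than the cited bound. The slice $\{g: g\mid w^r\}$ is the fiber over $w^r$ of the finite flat multiplication map $\mathbb A^m\times\mathbb A^{r-m}\to\mathbb A^r$, $(g,h)\mapsto gh$, which has degree $\binom{r}{m}$. So the multiplicity is exactly $\prod_i\binom{r}{m_i}$. This is strictly larger than $\prod_i(r-m_i+1)$ as soon as some $2\le m_i\le r-1$. For example, $r=3$, $m=2$ gives the slice $k[b,c]/(b^2-c,bc)\cong k[b]/(b^3)$, of length $3$ rather than $2$. This confirms the paper's suspicion that Luan's equality fails. For \Cref{greg_nonred}, of course, only the lower bound is used.
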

\begin{remark}
    The current version of \cite{luan} claims that the multiplicity is precisely the quoted value.
    This seems to be in error, as the proof only shows the statement above.
\end{remark}
\begin{corollary}\label{greg_nonred}
    Any generically regular component of the global nilpotent cone $h^{-1}(rC)$ is non-reduced if either $r>2$ or $r=2$ and $2\mid\deg_CN_{C/S}$.
\end{corollary}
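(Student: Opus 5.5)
Since $\mathring{\operatorname{Hilb}}^n_{rC}\to h^{-1}(rC)$ is smooth (\Cref{cor:hilbcover}) with image exactly the generically regular locus (\Cref{prop:paircover}), it suffices to show that every component of $\operatorname{Hilb}^n_{rC}$ meeting $\mathring{\operatorname{Hilb}}^n_{rC}$ is non-reduced, for suitable $n\gg 0$. Indeed, smooth morphisms preserve generic reducedness in both directions. Concretely, if a generically regular component $F$ of $h^{-1}(rC)$ were generically reduced, then over a dense open of $F$ where $h^{-1}(rC)$ is smooth, the preimage would be smooth. Since $f$ is smooth and surjective onto this locus, that preimage would be a nonempty open of $\operatorname{Hilb}^n_{rC}$ on which the Hilbert scheme is smooth. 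So the plan is to show $\operatorname{Hilb}^n_{rC}$ is nowhere reduced for the relevant $n$.

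By \Cref{lem:hilbetale} we may replace $C\subset S$ by a line in $\mathbb A^2$. By Luan's theorem, a component indexed by $n=m_1+\cdots+m_k$ with $m_i\le r$ has multiplicity at least $\prod_i(r-m_i+1)$. This product equals $1$ only when every $m_i=r$, i.e.\ when $r\mid n$ and the generic point is a disjoint union of $n/r$ curvilinear length-$r$ pieces. So every component is non-reduced unless $r\mid n$, in which case only the single component $(r,\ldots,r)$ can be reduced. The task is therefore to arrange $r\nmid n$.

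Recall $n=\chi+g(rC)-1$. By \Cref{lem:rCgenus},
\[
n=\chi+\binom r2\deg N_{C/S}+r(g-1).
\]
We are free to shift $\chi$ only within its class modulo $\deg\beta$. Reducing modulo $r$, $n\equiv\chi+\binom r2\deg N_{C/S}\pmod r$, and $\chi$ is a unit modulo $r$ by the standing coprimality $\gcd(r,\chi)=1$.

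The main obstacle is this arithmetic step. If $r$ is odd, then $\binom r2\equiv 0\pmod r$, so $n\equiv\chi\not\equiv0$. If $r$ is even, then $\binom r2\deg N\equiv \tfrac r2\deg N\pmod r$. When $\deg N$ is even this vanishes modulo $r$, so again $n\equiv\chi\not\equiv 0$; this is exactly the case $r=2$, $2\mid\deg N$. For even $r>2$ with $\deg N$ odd, we get $n\equiv\chi+r/2\pmod r$. Here $\chi$ is odd and $r/2$ may also be odd, so $r$ could divide $n$ (for example $r=6$, $\chi=3$). In that case we would need an extra argument to rule out a reduced $(r,\ldots,r)$ component. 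I would try to show that its generic point, a union of disjoint curvilinear length-$r$ subschemes of $rC$, gives the sheaf $\mathcal O_{rC}(D)$ for a Cartier divisor $D$. Such a sheaf has $\chi\equiv\chi(\mathcal O_{rC})\pmod r$, which is incompatible with $\gcd(r,\chi)=1$ precisely when $r$ divides $\chi(\mathcal O_{rC})$. If that fails, I would fall back on Luan's finer analysis.

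Finally, observe that $\deg N_{C/S}$ is even in the Calabi--Yau cases, since $\deg N=2g-2$ there, so this subtlety only arises in the Fano cases.
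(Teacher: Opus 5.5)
Your reduction is the same as the paper's: pass to the smooth cover $\mathring{\operatorname{Hilb}}^n_{rC}$, use \Cref{lem:hilbetale} to move to the local model, and apply Luan's bound $\prod_i(r-m_i+1)$. After that it only remains to show $r\nmid n$. The arithmetic step has a genuine gap, though. You leave the case of even $r>2$ with $\deg N_{C/S}$ odd unresolved, because you believe $r$ might divide $n$ there.

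That case cannot occur. You correctly derive $n\equiv\chi+r/2\pmod r$. So $r\mid n$ would force $\chi\equiv r/2\pmod r$, hence $r/2\mid\chi$. Since also $r/2\mid r$ and $r/2\ge 2$, this contradicts $\gcd(r,\chi)=1$. Your example $r=6$, $\chi=3$ is inadmissible precisely because $\gcd(6,3)=3$.

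The paper handles all cases at once. From $g_\beta-1\equiv\binom r2\deg N_{C/S}\pmod r$ one gets $\gcd(g_\beta-1,r)>1$ under the stated hypotheses:
\begin{itemize}
\item it equals $r$ for odd $r$;
\item it is at least $r/2$ for even $r$;
\item it equals $2$ when $r=2$ and $\deg N_{C/S}$ is even.
\end{itemize}
If $r\mid n=\chi+g_\beta-1$, any prime dividing this gcd would also divide $\chi$, which is impossible.

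Your proposed fallback would not have rescued the argument. The congruence $\chi(\mathcal O_{rC}(D))\equiv\chi(\mathcal O_{rC})\pmod r$ is just a restatement of $r\mid n$. In the case you flagged, $\chi(\mathcal O_{rC})=1-g_\beta\equiv r/2\pmod r$, which is not divisible by $r$, so your criterion ``$r\mid\chi(\mathcal O_{rC})$'' would not produce a contradiction. What is actually needed is $\gcd(\chi(\mathcal O_{rC}),r)>1$, which is exactly the gcd observation above. With that one line added, no appeal to Luan's finer analysis is needed, and your proof coincides with the paper's.
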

\begin{proof}
    Combining \Cref{lem:hilbetale} with \Cref{cor:hilbcover}, it suffices to check that the numbers in the preceding theorem can never all be $1$ in our situation.

    Recall that
    \[g_\beta-1\equiv \binom{r}{2}\deg_CN_{C/S}\pmod{r}.\]
    So if either $r>2$ or $r=2$ and $2\mid\deg_CN_{C/S}$, then $\gcd(g_\beta-1,r)>1$.

    As $\gcd(r,\chi)=1$, $n=\chi+g_\beta-1$ is not divisible by $r$ (even after possibly replacing $\chi$ with a higher value, for it does not change the residue class modulo $\deg\beta=\deg r[C]=r\deg [C]$).
    So it cannot be that $m_i=r$ for all $i$.
\end{proof}
In particular,
\begin{corollary}
    Suppose $\deg L>2g-2$.
    Then the global nilpotent cone for the $L$-twisted Hitchin fibration is nowhere reduced if either $r>2$ or $r=2$ and $2\mid \deg L$.
\end{corollary}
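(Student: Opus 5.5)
This follows by combining results already in place, so the plan is short. Set $S=\operatorname{Tot}_L$ with $C$ the zero-section, so that $N_{C/S}\cong L$ and $\deg_C N_{C/S}=\deg L>2g-2>0$. Under the BNR correspondence the global nilpotent cone $h_L^{-1}(0)$ is the fiber $h^{-1}(rC)$ of the support map.

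The first step is to show that $h^{-1}(rC)$ has no irregular components. The global nilpotent cone is the unstable locus, so by the GIT-stratification of \Cref{sec:GIT} it is the union of the strata $S_1,\dots,S_N$. Each irreducible component of $h_L^{-1}(0)$ is therefore the closure of some stratum $S_m$ with $\dim S_m=\dim h_L^{-1}(0)$. Here we use that $h_L$ is flat, so every component has dimension equal to the relative dimension. By \Cref{prop:twistedstrata}, such an $S_m$ has $Z_m$ consisting of points with all $r_i=1$, and these points are generically regular. Since generic regularity is an open condition and $S_m$ is an affine bundle over $Z_m$ whose closure contains $Z_m$, the generic point of the component is generically regular. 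This is the content of the corollary following \Cref{prop:twistedstrata}.

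The second step is to apply \Cref{greg_nonred}, whose parity hypothesis is exactly the assumption here: either $r>2$, or $r=2$ with $2\mid\deg L=\deg_C N_{C/S}$. It gives that every generically regular component is non-reduced. Together with the first step, no component of $h^{-1}(rC)$ is reduced at its generic point.

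Finally, nowhere reducedness means the smooth locus of $h^{-1}(rC)$ is empty. A smooth point would lie on a unique component, and that component would be generically reduced there, which contradicts non-reducedness along the whole component. So generic non-reducedness of every component suffices.

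The only delicate step is the first. We must make sure the components of the nilpotent cone are exactly the closures of the top-dimensional strata, which uses equidimensionality of the fiber coming from flatness of $h_L$. We must also check that the regularity of the fixed points in $Z_m$ propagates to a generic point of $S_m$, which follows from openness of generic regularity.
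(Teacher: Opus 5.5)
Your proposal is correct and is essentially the paper's argument. The paper obtains the statement by combining the corollary to \Cref{prop:twistedstrata}, which says that when $\deg L>2g-2$ there are no irregular components, with \Cref{greg_nonred} applied to $N_{C/S}\cong L$; your checks on equidimensionality and on openness of generic regularity along $S_m\supset Z_m$ only spell out details the paper leaves implicit.
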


\section{Del Pezzo surfaces}
\subsection{Moduli of sheaves on a del Pezzo surface}
Now let $(S,H)$ be a polarized del Pezzo surface, \emph{i.e.}~a smooth projective surface with ample anticanonical bundle.
Recall that we have
\[H^1(S,\mathcal O)=H^2(S,\mathcal O)=0.\]

Let $C\subset S$ be a smooth curve of genus $g\ge2$, and $\chi\in\mathbb Z$ with $\gcd(C\cdot H,\chi)=1$.

Let $r>0$ be such that $\gcd(r,\chi)=1$.
Denote by $\beta$ the class of $rC$.
Note that in this case $M_S(\beta,\chi)$ is a smooth projective variety and the support fibration $h:M_S(\beta,\chi)\to |\beta|$ is flat \cite{yuan2023}.

We aim to show the following statement, which may be viewed as an analogue of \Cref{prop:twistedstrata}:
\begin{proposition}\label{prop:delpezzogreg}
    For $n\gg0$, there is an open subscheme $\mathring{\operatorname{Hilb}}^n_{rC}\subset\operatorname{Hilb}^n_{rC}$ such that the smooth map $\mathring{\operatorname{Hilb}}^n_{rC}\rightarrow h^{-1}(rC)$ is dominant. 
\end{proposition}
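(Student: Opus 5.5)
We argue by deformation to the normal cone, as sketched in the introduction. Let $\mathcal S\to\mathbb A^1$ be the deformation to the normal cone of $C\subset S$: $\mathcal S_t\cong S$ for $t\neq0$ and $\mathcal S_0=\operatorname{Tot}_L$ with $L=N_{C/S}$. Since $S$ is del Pezzo, adjunction gives $\deg L=2g-2-K_S\cdot C>2g-2$. The curve $C\times\mathbb A^1$ lies in $\mathcal S$ (the strict transform of $C\times\mathbb A^1$), so $rC$ gives a section $\sigma:\mathbb A^1\to|\beta|_{\mathcal S/\mathbb A^1}$. We form the relative moduli space $\mathcal M\to\mathbb A^1$ of stable sheaves of class $\beta$ and Euler characteristic $\chi$ on the fibers, the relative support map $\mathcal M\to|\beta|_{\mathcal S/\mathbb A^1}$, and the relative Hilbert scheme $\mathcal H=\operatorname{Hilb}^n_{r(C\times\mathbb A^1)/\mathbb A^1}$. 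The latter is a constant family $\operatorname{Hilb}^n_{rC}\times\mathbb A^1$, at least locally analytically by the argument of \Cref{lem:hilbetale}. It is enough to work over the $\mathbb G_m$-equivariant locus near $0$, since the $\mathbb G_m$-action on $\mathcal S$ scaling $t$ identifies all fibers $t\neq0$.

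\textbf{Step 1: control of the degeneration.} We need the relative global nilpotent cone $\mathcal N=\mathcal M\times_{|\beta|}\sigma(\mathbb A^1)$ to be proper over $\mathbb A^1$, and its fibers to have constant dimension $\dim h^{-1}(rC)$. For this we choose the polarization on $\mathcal S$ to restrict to $H$ on $\mathcal S_t$ and to a pullback-type polarization on $\operatorname{Tot}_L$. Because $\gcd(rC\cdot H,\chi)=1$, semistability equals stability on every fiber, including the central one. Here the coprimality $\gcd(r,\chi)=1$ is used for the twisted Hitchin side, where $\gcd(r,d)=1$. The relative moduli space is then separated, and properness of $\mathcal N\to\mathbb A^1$ follows from Langton's theorem together with properness of $h_L^{-1}(0)$. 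Flatness of $h$ in each fiber (\cite{yuan2023} for $t\ne0$; \cite{chaudlaumon,MSchi} for $t=0$) gives equidimensionality: every fiber of $\mathcal N$ has pure dimension $\binom r2\deg L+r(g-1)+1$.

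\textbf{Step 2: the central fiber.} By \Cref{prop:twistedstrata} and its corollary, every component of $\mathcal N_0=h_L^{-1}(0)$ is generically regular. By \Cref{prop:paircover} and \Cref{cor:hilbcover}, the map $\mathring{\operatorname{Hilb}}^n_{rC}\to\mathcal N_0$ is smooth with image the generically regular locus, hence dense in every component. Relatively, we get an open $\mathring{\mathcal H}\subset\mathcal H$ with a smooth map $f:\mathring{\mathcal H}\to\mathcal N$ over $\mathbb A^1$, whose image $W$ is an open subset of $\mathcal N$ meeting every component of $\mathcal N_0$ densely.

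\textbf{Step 3: spreading out.} Let $Y=\mathcal N\setminus W$, which is closed and proper over $\mathbb A^1$. Its fiber $Y_0$ has dimension strictly less than that of $\mathcal N_0$. By upper semicontinuity of fiber dimension for the proper map $Y\to\mathbb A^1$, $\dim Y_t<\dim\mathcal N_t$ for $t$ in a neighborhood of $0$. Since $\mathcal N_t$ is equidimensional by Step 1, $W_t$ is dense in $\mathcal N_t$ for such $t$. By $\mathbb G_m$-equivariance this holds for all $t\ne0$, so $\mathring{\operatorname{Hilb}}^n_{rC}\to h^{-1}(rC)$ is dominant.

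The main obstacle is Step 1. We must construct the relative moduli space on the non-proper degenerating family $\mathcal S$ with a polarization for which stability is preserved at $t=0$, and prove properness of $\mathcal N$ over $\mathbb A^1$. One subtlety is that sheaves on $\mathcal S_t$ supported on $rC$ must specialize to sheaves supported on the zero section rather than escaping to infinity or to the other component of the special fiber of the deformation space. I would handle this by working in the open complement of the strict transform of $S\times 0$, where the family of curves $r(C\times\mathbb A^1)$ is proper, and checking that the BNR description identifies the central fiber with $h_L^{-1}(0)$.
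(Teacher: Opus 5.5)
Your proposal is correct and follows essentially the same route as the paper. The steps match: deformation to the normal cone, the relative smooth cover of the family of nilpotent cones by $\mathring{\operatorname{Hilb}}^n$, dominance over $t=0$ via \Cref{prop:twistedstrata}, and your Step 3, which is exactly the paper's closing lemma spreading dominance from one fiber of a proper equidimensional family to a nonempty open set. The differences are cosmetic: you get equidimensionality of the $\mathcal N_t$ from fiberwise flatness of $h$ and $h_L$ (with \Cref{lem:rCgenus} matching dimensions) instead of the paper's lemma that $\mathcal M,\mathcal B$ are smooth over $\mathbb A^1$ with $\mathcal M\to\mathcal B$ flat, and your remark that the relative Hilbert scheme is a constant family plays no role and is only justified locally analytically.
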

\subsection{Deformation to the normal cone}
We make use of the deformation to the normal cone construction for the inclusion $C\hookrightarrow S$.
It gives a family of surfaces
\[\mathcal S\rightarrow \mathbb{A}^1\]
with $\mathcal S_t=S$ for $t\neq 0$, and $\mathcal S_0 = \operatorname{Tot}_L$ where $L$ is the normal bundle to $C$.
Note that $\deg L=\deg(K_C-K_S|_C)>2g-2$.

Associated to this data is a family of homology classes $\beta_t=r[C]\in H_2(\mathcal S_t;\mathbb Z)$.

Let $\mathcal M\to\mathbb A^1$ be the relative moduli space of sheaves that parameterizes, for each $t\in\mathbb A^1$, pure sheaves with proper Fitting support in $\beta_t$ and Euler characteristic $\chi$.
Let $\mathcal B\to \mathbb A^1$ be the relative Hilbert scheme of curves with class $\beta_t$.
The support map then gives an $\mathbb A^1$-morphism
\[\begin{tikzcd}
    \mathcal M\arrow{rr}\drar&&\mathcal B\dlar\\
    &\mathbb A^1&
\end{tikzcd}.\]
\begin{lemma}
    The spaces $\mathcal M$ and $\mathcal B$ are irreducible varieties smooth over $\mathbb A^1$, and the map $\mathcal M\to\mathcal B$ is proper and flat.
\end{lemma}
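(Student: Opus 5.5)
The plan is to reduce every claim to a fiberwise statement, using that the family is trivial away from $t=0$. First, over $\mathbb A^1\setminus\{0\}$ the deformation to the normal cone is the product $S\times(\mathbb A^1\setminus\{0\})$. Hence $\mathcal M$ and $\mathcal B$ restrict to $M_S(\beta,\chi)\times\mathbb G_m$ and $|\beta|\times\mathbb G_m$ there. By \cite{yuan2023}, $M_S(\beta,\chi)$ is smooth projective and $h$ is flat, and $|\beta|$ is a projective space because $H^1(S,\mathcal O)=0$.

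Over $t=0$ the fibers are the moduli space $M^L_{r,d}$ of $L$-twisted Higgs bundles, with $L=N_{C/S}$ and $\deg L>2g-2$, together with its Hitchin base $B$. These are smooth and $h_L$ is proper flat, as recalled in Section~\ref{sec:Higgs} from \cite{chaudlaumon,MSchi}. There is one subtlety: stability on $\mathcal S_0$ should be taken with respect to the pullback of the restriction of $H$. Since $\gcd(r\,C\cdot H,\chi)=1$, semistability equals stability on every fiber, and for sheaves supported on multiples of $C$ this matches slope stability of Higgs bundles with $\gcd(r,\chi)=1$.

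Smoothness of $\mathcal B\to\mathbb A^1$ comes next. The obstruction space for a curve $D$ in $\mathcal S_t$ is $H^1(D,N_D)=H^1(D,\mathcal O_D(D))$. This vanishes on both kinds of fibers. On $S$ it follows from $H^1(\mathcal O_S(D))=0$, using Kodaira vanishing (since $D-K_S$ is ample, as $D$ is effective with $D\cdot(-K_S)>0$) together with $H^2(\mathcal O_S)=0$. On $\operatorname{Tot}_L$ the base is the affine space $B$. Since the fiber dimension is constant, namely $\dim|\beta|=\dim B$, which holds by a Riemann--Roch computation, $\mathcal B$ is smooth over $\mathbb A^1$.

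For $\mathcal M$, the deformation-obstruction theory of a stable sheaf $\mathcal F$ on $\mathcal S_t$ relative to $\mathbb A^1$ has obstruction space $\operatorname{Ext}^2(\mathcal F,\mathcal F)_0$. By Serre duality this is dual to $\operatorname{Hom}(\mathcal F,\mathcal F\otimes K_{\mathcal S_t})$. That group vanishes because $\mathcal F$ is stable and $K_{\mathcal S_t}$ is negative on the support: for $t=0$, $K_{\operatorname{Tot}_L}|_C=K_C\otimes L^{-1}$ has negative degree. So $\mathcal M\to\mathbb A^1$ is smooth. The same vanishing gives that each fiber has dimension $\beta^2+1$, which is constant in $t$.

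Properness of $\mathcal M\to\mathcal B$ is the step I expect to be the main obstacle, because $\mathcal S_0$ is not proper. I would argue by the valuative criterion, using Langton-type semistable reduction for the relative moduli of sheaves with proper support. Fixing the support in $\mathcal B$ forces the limiting support to be a proper curve. Coprimality rules out strictly semistable limits. Alternatively, I would realize $\mathcal M$ inside the GIT construction for the relative projective completion $\mathbb P(N\oplus\mathcal O)$ of the degeneration, and use properness of that moduli over $\mathcal B$.

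Flatness then follows from miracle flatness. The spaces $\mathcal M$ and $\mathcal B$ are smooth. Every fiber of $\mathcal M\to\mathcal B$ lies in some fiber $h_t$, and these are flat of the same relative dimension $g_\beta$ for all $t$, by the previous step and \Cref{lem:rCgenus} (or equidimensionality as in \cite{matequid}).

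Finally, irreducibility. Each fiber $\mathcal M_t$ is irreducible: for $t\neq0$ by \cite{yuan2023}, and for $t=0$ by connectedness plus smoothness of $M^L_{r,d}$. By flatness, every component of $\mathcal M$ dominates $\mathbb A^1$. Hence $\mathcal M$ equals the closure of $M_S(\beta,\chi)\times\mathbb G_m$, which is irreducible. The same argument applies to $\mathcal B$.
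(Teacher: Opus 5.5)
Your argument is correct in outline but takes a different route from the paper. The paper treats smoothness over $\mathbb A^1$, irreducibility, properness and flatness as standard, citing \cite{dCMS22}. It checks only one non-formal input: that $\dim\mathcal B_t$ is constant, i.e.\ $h^0(\mathcal O_S(rC))-1=\sum_{i=1}^r h^0(\mathcal O_C(iL))$. It proves this by induction on $r$ along $0\to\mathcal O_S(rC)\to\mathcal O_S((r+1)C)\to\mathcal O_C((r+1)L)\to 0$. That yields $H^1(\mathcal O_S(rC))=0$ and the identity together, using only $H^1(\mathcal O_S)=0$, $H^2(\mathcal O_S(rC))=0$ and $\deg L>2g-2$. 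It also shows $B$ is the associated graded of the filtration of $H^0(\mathcal O_S(rC))$ by the $H^0(\mathcal O_S(iC))$, which is how $|\beta|$ degenerates to $B$. You instead unpack the ``standard'' part: vanishing of the relative obstruction spaces $H^1(D,\mathcal O_D(D))$ and $\operatorname{Ext}^2(\mathcal F,\mathcal F)$ on both kinds of fibers, Langton for properness, miracle flatness, and irreducibility from flatness plus the irreducible generic fiber. In your route the dimension identity is a consequence rather than an input. Once the obstructions vanish, $\mathcal B\to\mathbb A^1$ is smooth, and $\mathcal B$ is connected (it contains the section $rC$ and all fibers are connected), so the relative dimension is constant. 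Your Riemann--Roch claim is still correct: both sides equal $\binom{r+1}{2}\deg L-r(g-1)$, using $C^2=\deg L$ and adjunction.

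Two justifications need repair. First, ``$D-K_S$ is ample since $D$ is effective with $D\cdot(-K_S)>0$'' is false in general. For a $(-1)$-curve $E$ and $D=2E$ one gets $(2E-K_S)\cdot E=-1$, and in fact $H^1(\mathcal O_S(2E))\neq 0$. What holds here is that $D\sim rC$ with $C$ nef, since it is irreducible with $C^2=2g-2-K_S\cdot C>0$. So $D-K_S$ is nef plus ample, hence ample; alternatively, use the paper's induction. Second, at $t=0$ the remark that the base is the affine space $B$ says nothing about the obstruction space; compute it directly. With $\pi:\operatorname{Tot}_L\to C$ the projection, $\mathcal O_D(D)=\pi^\ast L^{r}|_D$ and $\pi_\ast\mathcal O_D=\bigoplus_{j=0}^{r-1}L^{-j}$. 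Hence $H^1(D,\mathcal O_D(D))=\bigoplus_{i=1}^{r}H^1(C,L^{i})=0$, as $\deg L>2g-2$. Also, $\mathbb P(N\oplus\mathcal O)$ compactifies only the special fiber; for Langton over the whole degeneration use e.g.\ $\operatorname{Bl}_{C\times 0}(S\times\mathbb A^1)$. With these fixes your proof goes through.
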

\begin{proof}
    This is standard (see \cite{dCMS22}).
    We just need to show that $\dim\mathcal B_t$ is independent of $t$.
    This reduces to showing
    \begin{equation}\label{eq:dimeq_delpezzo}
        h^0(\mathcal O_S(rC))-1=\sum_{i=1}^rh^0(\mathcal O_C(iL)).
    \end{equation}
    Since $\deg L>2g-2$, we have $h^0(\mathcal O_C(iL))=\chi(\mathcal O_C(iL))$ for all $i>0$.
    Also, as $-K_S$ is ample, $H^2(\mathcal O_S(rC))=0$ for all $r\ge 0$.

    We do induction on $r$ on the following statement:
    The cohomology $H^1(\mathcal O_S(rC))$ vanishes and \Cref{eq:dimeq_delpezzo} holds.

    For $r=0$ this is clear by Kodaira vanishing.
    Assuming this holds for $r$, then we have
    \[\sum_{i=1}^{r+1}h^0(\mathcal O_C(iL))=\chi(\mathcal O_S(rC))-1+\chi(\mathcal O_C((r+1)L)).\]

    Consider now the exact sequence
    \[\begin{tikzcd}
        0\rar&\mathcal O_S(rC)\rar&\mathcal O_S((r+1)C)\rar&\mathcal O_C((r+1)L)\rar&0
    \end{tikzcd}\]
    which immediately shows that $H^1(\mathcal O_S((r+1)C))=0$ and \Cref{eq:dimeq_delpezzo} follows.
\end{proof}
The map $\mathcal B\to\mathbb A^1$ has an obvious section given by the nilpotent curve $rC$.
This gives a family $\mathcal C_{\rm nilp}\to\mathbb A^1$ of curves given by $rC$ and a family $\mathcal M_{\rm nilp}\to\mathbb A^1$ of the corresponding global nilpotent cone.
\begin{proof}[Proof of \Cref{prop:delpezzogreg}]
    Consider the Hilbert scheme fibration
    \[\operatorname{Hilb}_{\mathcal C_{\rm nilp}/\mathbb{A}^1}^n\rightarrow \mathbb{A}^1.\]

    By \Cref{prop:paircover}, possibly after increasing $\chi$ and $n$, there is a smooth cover
    \[\mathring{\operatorname{Hilb}}^n_{\mathcal C_{\rm nilp}/\mathbb{A}^1} \rightarrow \mathcal M_{\rm nilp}\]
    relative to $\mathbb A^1$.
    By \Cref{prop:twistedstrata}, the base change over $0\in \mathbb{A}^1$ is dominant.
    Then the statement follows from the lemma below.
\end{proof}

\begin{lemma}
    Let $f:Y\rightarrow X$ be a smooth map and $X\to B$ proper and equidimensional.
    If $f_b:Y_b\rightarrow X_b$ is dominant for some $b\in B$, then the same is true for all $b$ in some nonempty open $U\subset B$.
\end{lemma}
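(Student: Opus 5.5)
The plan is to prove the contrapositive through the non-dominance locus. Set $W=X\setminus f(Y)$. Since $f$ is smooth it is flat, hence open, so $f(Y)$ is open in $X$ and $W$ is closed. For each $b$, the map $f_b$ is the base change of $f$, so $f_b(Y_b)=f(Y)\cap X_b$. Thus $f_b$ is dominant exactly when $W\cap X_b=W_b$ contains no irreducible component of $X_b$. Equivalently, $\dim(W_b\cap F)<\dim F$ for every component $F$ of $X_b$. Because $X\to B$ is equidimensional, all fibers over the image have pure dimension $d$. I will take this to be the meaning of equidimensional, namely that every component of every fiber has dimension $d$. Then $f_b$ is dominant if and only if $\dim W_b<d$. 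The statement therefore reduces to showing that the set $\{b:\dim W_b\ge d\}$ is closed and does not contain $b_0$.

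First, $b_0$ is not in this set, by hypothesis and the equivalence above. Second, the restriction $W\to B$ of the proper map $X\to B$ to the closed subscheme $W$ is proper. By upper semicontinuity of fiber dimension (Chevalley), the function $w\mapsto\dim_w W_{\pi(w)}$ is upper semicontinuous on $W$. Hence $\{w\in W:\dim_w W_{\pi(w)}\ge d\}$ is closed in $W$, and its image in $B$ is closed by properness. This image is exactly $\{b:\dim W_b\ge d\}$. Its complement $U$ is then open, contains $b_0$, and for every $b\in U$ the map $f_b$ is dominant. If $B$ is irreducible, or if we simply intersect with the image, $U$ is the required nonempty open.

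The main subtlety I expect is the equivalence "dominant $\iff\dim W_b<d$." This needs every component of $X_b$ to have dimension exactly $d$, so that a closed subset of dimension less than $d$ cannot swallow a component. This is precisely where equidimensionality is used. In the application, $X=\mathcal M_{\rm nilp}$ and $B=\mathbb A^1$, and this follows from flatness of $\mathcal M\to\mathcal B$ (established in the preceding lemma) restricted along the section $rC$. I should also check that points of $B$ outside the image of $X$ cause no trouble. Over such $b$ both $X_b$ and $Y_b$ are empty, so $f_b$ is vacuously dominant. Properness of $X\to B$ is what makes the bad locus closed.
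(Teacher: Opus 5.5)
Your proof is correct and follows the same route as the paper. The paper takes the closed set $W=X\setminus f(Y)$, lets $U$ be the locus where $W_b$ has dimension $<d$, and uses equidimensionality to identify this with the dominance locus. You additionally supply what the paper leaves implicit: $W$ is closed because smooth maps are open, and $U$ is open by upper semicontinuity of fiber dimension on $W$ together with properness of $W\to B$. Your closing caveat about irreducibility of $B$ or intersecting with the image is unnecessary, since $U$ already contains $b_0$.
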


\begin{proof}
    Let $d$ be the dimension of the fibers of $X\rightarrow B$.
    Let $U\subset B$ be the open where the map $X\setminus f(Y)\rightarrow B$ has fibers of dimension less than $d$; this is a nonempty neighborhood of $b\in B$ by assumption. The result follows by equidimensionality of $X\rightarrow B$. 
\end{proof}

\begin{corollary}
    Suppose either $r>2$ or $r=2$ and $2\mid \deg_CN_{C/S}$.
    Then the global nilpotent cone $h^{-1}(rC)$ is nowhere reduced.
\end{corollary}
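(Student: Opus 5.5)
The plan is to combine \Cref{prop:delpezzogreg} with \Cref{greg_nonred}. Fix an irreducible component $F$ of $h^{-1}(rC)$. The goal is to show that $h^{-1}(rC)$ is non-reduced at a generic point of $F$, and hence at every point of $F$, since the non-reduced locus is closed and the components cover the fiber.

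\textbf{Step 1.} Show that $F$ is generically regular. First replace $\chi$ by a larger value with the same residue modulo $\deg\beta$ so that $n\gg0$; this twist by the polarization does not change the fibration. By \Cref{prop:delpezzogreg}, the smooth map $f:\mathring{\operatorname{Hilb}}^n_{rC}\to h^{-1}(rC)$ is dominant, so its image, which is open because $f$ is smooth, meets $F$ in a dense open subset. By \Cref{prop:paircover}, the image of $f$ consists exactly of sheaves whose schematic and Fitting supports agree. Such sheaves are generically regular, so $F$ is generically regular.

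\textbf{Step 2.} Apply \Cref{greg_nonred} directly. Its proof transfers non-reducedness as follows. Because $f$ is smooth, the multiplicity of $h^{-1}(rC)$ along $F$ equals the multiplicity of $\mathring{\operatorname{Hilb}}^n_{rC}$ along any component dominating $F$. By \Cref{lem:hilbetale}, the latter multiplicities are the ones computed for $\operatorname{Hilb}^n_{r\mathbb A^1}\subset\operatorname{Hilb}^n_{\mathbb A^2}$. Luan's bound $\prod_i(r-m_i+1)$ can equal $1$ only if every $m_i=r$, which would force $r\mid n$. This is impossible: we have $n=\chi+g_\beta-1$, $\gcd(r,\chi)=1$, and by \Cref{lem:rCgenus} we get $g_\beta-1\equiv\binom r2\deg N_{C/S}\pmod r$, which is divisible by $r$ exactly in the cases allowed by the hypothesis. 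Since $\deg N_{C/S}=\deg(K_C-K_S|_C)>0$ on a del Pezzo surface, \Cref{lem:rCgenus} applies. We also need $rC$ to be a point of $|\beta|$ with the same $\chi$-residue bookkeeping; this holds because $\deg\beta=rC\cdot H$, so increasing $\chi$ modulo $\deg\beta$ preserves the residue modulo $r$.

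\textbf{Main obstacle.} The hard part is justifying the multiplicity transfer along a smooth morphism. A smooth map is flat with regular fibers, so for the generic point $\eta$ of a component of the source lying over the generic point $\xi$ of $F$, the local ring $\mathcal O_\eta$ is flat over $\mathcal O_\xi$ with regular closed fiber. The lengths of the Artinian local rings therefore agree, using $\operatorname{length}(\mathcal O_\eta)=\operatorname{length}(\mathcal O_\xi)\cdot\operatorname{length}(\mathcal O_\eta/\mathfrak m_\xi\mathcal O_\eta)$, where the second factor is $1$ because that fiber ring is a field. This is already implicit in \Cref{greg_nonred}, so the corollary follows once Step 1 rules out irregular components.
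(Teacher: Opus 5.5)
Your overall route is the paper's. \Cref{prop:delpezzogreg} shows that every component of $h^{-1}(rC)$ is generically regular, and \Cref{greg_nonred} then makes each of them non-reduced. Your length computation for transferring multiplicities along the smooth map $\mathring{\operatorname{Hilb}}^n_{rC}\to h^{-1}(rC)$ is also correct.

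A small point on citations: \Cref{prop:paircover} as stated only says the image contains the generically regular sheaves. The reverse containment you use in Step 1 needs one more remark. The image of a section with zero-dimensional cokernel is $\mathcal O_{C'}$, where $C'$ is the Fitting support. Hence $\operatorname{Fitt}_0(\mathcal F)\subset\operatorname{Ann}(\mathcal F)\subset I_{C'}=\operatorname{Fitt}_0(\mathcal F)$, so the two supports agree.

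The genuine problem is the arithmetic in Step 2. You claim that $\binom r2\deg N_{C/S}$ is divisible by $r$ exactly under the hypothesis. But $\binom r2\deg N_{C/S}=\tfrac r2(r-1)\deg N_{C/S}$ is divisible by $r$ if and only if $r$ is odd or $\deg N_{C/S}$ is even. Now take $r\ge 4$ even with $C^2$ odd. For example, let $S=\mathbb P^2$, $C$ a smooth plane quintic and $r=4$. Then $g_\beta-1=\binom42\cdot 25+4\cdot 5=170\equiv 2\pmod 4$. So $r\nmid g_\beta-1$, and your deduction $n\equiv\chi\not\equiv 0\pmod r$ collapses. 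This case is allowed by the hypothesis $r>2$, so as written your argument does not cover it.

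What the hypothesis actually gives is $\gcd(g_\beta-1,r)>1$; in the bad case this gcd is $r/2\ge 2$. That is enough. If $r\mid n=\chi+g_\beta-1$, then any prime dividing both $r$ and $g_\beta-1$ would divide $\chi$, contradicting $\gcd(r,\chi)=1$. This is exactly how the paper argues in \Cref{greg_nonred}. With this repair, or by simply citing that corollary instead of re-deriving it, your proof is complete.
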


\section{Primitivity of fibers}
\subsection{The setup}
Suppose $S$ is a K3 surface, $\ell$ a primitive, basepoint-free, big and nef class on $S$, and $r,\chi$ coprime integers.
Denote by
\[M=M_S(r\ell,\chi)\to B=|r\ell|\]
the associated Beauville-Mukai system.
The purpose of this section is to prove the following:
\begin{proposition}\label{prop:primfibers}
    The integral homology class of a general fiber of $M\to B$ is primitive (that is, indivisible) if and only if $r=1$.
\end{proposition}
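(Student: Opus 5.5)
The plan has two directions; the substance is in showing that a general fiber is divisible when $r>1$.

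\emph{Case $r=1$.} Here the Beauville--Mukai system $M\to B=|\ell|$ is a Lagrangian fibration whose general fiber is $\operatorname{Pic}^{\chi+g-1}(C)$ for a smooth $C\in|\ell|$. I would show the fiber class is primitive by exhibiting a class pairing to $1$ with it. The natural candidate is a curve-type cycle meeting a general fiber transversally in one point. One option is the image of a section-like family, obtained by fixing a point $p\in C$ moving in a pencil and taking $\mathcal O_C(np)$. A cleaner option is to use $B=\mathbb P^g$: the hyperplane class $H$ satisfies $h^\ast H^g\cdot[\text{pt}]$-type pairings, so $\int_{[F]}\alpha$ for a suitable $\alpha\in H^{2g}(M;\mathbb Z)$ gives $1$. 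For instance, take $\alpha=\theta^g/g!$, where $\theta$ is an integral class restricting to the principal polarization on the Jacobian fiber. This exists because for $r=1$ a determinant-line-bundle class restricts to the theta divisor. Since $\theta^g/g!$ integrates to $1$ on a principally polarized abelian variety, $[F]$ is primitive. The only delicate point is producing an integral class on $M$ whose restriction is exactly $\Theta$, up to translation; I would build it via a universal sheaf, using $\gcd$ data, as a determinant of cohomology.

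\emph{Case $r>1$.} I would use the nowhere-reducedness results. Degenerate a general fiber $F=h^{-1}(b)$ to the global nilpotent cone $h^{-1}(rC)$ along a line in $B$. By flatness, $[F]=\sum_i m_i[N_i]$ in $H_{2d}(M;\mathbb Z)$, where the $N_i$ are the components of $h^{-1}(rC)$ with multiplicities $m_i$. This is a specialization of cycles, using properness and flatness of $h$. Irregular components are non-reduced by \Cref{k3:irreg_nonred}, and generically regular ones by \Cref{greg_nonred}. The latter applies for $r>2$, and for $r=2$ because $\deg N_{C/S}=C^2$ is even on a K3. Hence every $m_i\ge 2$. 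To apply these I need a smooth $C\in|\ell|$ of genus $\ge2$; this exists since $\ell$ is basepoint-free and big (Bertini), with $g\ge2$ whenever $\ell^2\ge 2$.

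Having all $m_i\ge2$ does not by itself give divisibility, so the key step, and the main obstacle, is to get a common divisor. I would first try to show that $\gcd_i m_i>1$. From Luan's bound the regular multiplicities are products $\prod(r-m_j+1)$, which is not obviously uniform. So I would instead argue via a more refined specialization. Pair $[F]$ with integral cohomology classes and show that every pairing $\langle \alpha,[F]\rangle$ is divisible by some fixed $p\mid r$. For instance, use the Liouville action: the fiber over $rC$ carries an action of the positive-dimensional unipotent kernel of $\operatorname{Pic}(rC)\to\operatorname{Pic}(C)$, whereas the general fiber has an action of $\operatorname{Pic}^0(C_b)$. If the uniform-gcd route fails, the fallback is to show directly that $\int_F\alpha\equiv0\pmod r$ for a basis of $H^{2d}$. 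Here I would use that $[F]=c\cdot h^\ast[\mathrm{pt}]$, with $h^\ast[\text{pt}]=h^\ast H^{\dim B}$, and that the restriction of $H^\ast(M)$ to $F$ is controlled by the tautological classes. Those are computed on the degree-$r$ spectral cover, and each picks up a factor of $r$ from the relation $[C_b]=r[C]$ in the curve class.
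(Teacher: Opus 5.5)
Both directions have a genuine gap.

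\emph{The case $r=1$.} To conclude primitivity from $\int_F\theta^g/g!=1$ you need $\theta^g/g!\in H^{2g}(M;\mathbb Z)$. This does not follow from $\theta\in H^2(M;\mathbb Z)$. Integrality of $\theta$ only gives $\langle\theta^g,[F]\rangle=g!$, which is compatible with $[F]$ being divisible by any divisor of $g!$. So the delicate point is the division by $g!$, not the construction of $\theta$, and it can fail.

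Take the paper's model $S_0^{[2]}\to(\mathbb P^1)^{(2)}$. Let $\theta$ be the class induced by the section curve $s$ plus $\delta$, where $2\delta$ is the exceptional divisor. Since $\delta$ vanishes on a general fiber $E_1\times E_2$, $\theta$ restricts there to the product principal polarization. On $s^{(2)}\cong\mathbb P^2$, however, the $s$-part gives $-2H$ and $\delta$ gives $H$ (the exceptional divisor meets $s^{(2)}$ transversally in the diagonal conic). Hence $\int_{s^{(2)}}\theta^2/2=1/2$, and $\theta^2/2$ is not integral.

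Your other idea, a ``curve-type cycle'', has the wrong dimension. A cycle detecting $[F]\in H_{2g}(M)$ must be $g$-dimensional, such as the closure of a rational section over all of $B$. Sections over a pencil are curves and do not meet a general fiber. A rational section does exist when $\chi\equiv 1-g\pmod{2g-2}$, via $b\mapsto\mathcal O_{C_b}(k\ell)$, but not evidently for general $\chi$.

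The paper avoids this by deformation. It deforms $(S,\ell)$ in the connected moduli of quasi-polarized K3 surfaces to an elliptic K3 $S_0$ with section $s$ and $\ell_0=s+gf$. There, relative Fourier--Mukai gives $M_{S_0}(\ell_0,\chi)\cong S_0^{[g]}$ over $(\mathbb P^1)^{(g)}\cong|\ell_0|$, which has an honest section. The class of a general fiber is locally constant over contractible opens of the family, so primitivity transfers back to $(S,\ell)$.

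\emph{The case $r>1$.} You rightly note that $m_i\ge 2$ for all $i$ does not give divisibility, but none of your routes supplies a common divisor. Luan's bounds are not uniform. The Liouville action is never turned into a divisibility statement. The claim that tautological classes ``pick up a factor of $r$'', so that every integral class pairs with $[F]$ to a multiple of a fixed $p\mid r$, is a heuristic rather than an argument, and the paper never claims divisibility by a divisor of $r$.

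The missing input is the theorem of \cite{CKV,lu_preprint_01} quoted in the paper. In a generically abelian fibration, a fiber whose component multiplicities have $\gcd$ equal to $1$ has a reduced component. Combine this with the nowhere-reducedness of $h^{-1}(rC)$, which you set up correctly via \Cref{k3:irreg_nonred} and \Cref{greg_nonred}. This gives $\gcd_i m_i\ge 2$, so $[h^{-1}(rC)]=\sum_i m_i[N_i]$ is divisible. Since $B$ is proper, $[M_p]=h^\ast[p]$ is independent of $p$, so the class of a general fiber is not primitive.
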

\subsection{A deformation to Hilbert scheme}
We show that the integral homology class is primitive if $r=1$.
The argument first appeared in a previous version of \cite{bai2026}.

The technique is to deform any such Beauville-Mukai system to a situation with a section.
Take an elliptic K3 surface $\rho:S_0\to\mathbb P^1$ with integral fibers and a section $s$, such that $\operatorname{Pic}(S_0)=\mathbb Zc_1(s)\oplus\mathbb Zf$ where $f\in H^2(S_0,\mathbb Z)$ is the class of a general fiber.

There is a fibration $\rho^{[g]}:S_0^{[g]}\to(\mathbb P^1)^{(g)}=\mathbb P^g$ given by composing the Hilbert-Chow map $S_0^{[g]}\to S_0^{(g)}$ with the functorial map on the symmetric product $S_0^{(g)}\to (\mathbb P^1)^{(g)}$.

Let $\ell_0=s+gf$.
Then we can naturally identify $(\mathbb P^1)^{(g)}\cong |\ell_0|$ by sending $p_1+\cdots+p_g\in (\mathbb P^1)^{(g)}$ to the effective Cartier divisor $s+\rho^{-1}(p_1)+\cdots+\rho^{-1}(p_g)$.
\begin{proposition}
    There is an isomorphism $M_{S_0}(\ell_0,\chi)\cong S_0^{[g]}$ such that the diagram
    \[\begin{tikzcd}
        M_{S_0}(\ell_0,\chi)\dar \rar{\cong} & S_0^{[g]} \dar\\
        {|\ell_0|} \rar{\cong} & (\mathbb P^1)^{(g)}
    \end{tikzcd}\]
    commutes.
\end{proposition}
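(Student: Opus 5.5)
We construct the isomorphism directly from the geometry of the elliptic K3 $S_0$ with section $s$, fiber class $f$ and $\operatorname{Pic}(S_0)=\mathbb Zc_1(s)\oplus\mathbb Zf$. Since tensoring by a line bundle on $S_0$ of degree $1$ on curves of class $\ell_0$ shifts $\chi$ by $1$, we may normalize $\chi$ once and for all. The claim is then independent of $\chi$: $\ell_0\cdot(s+gf)=-2+2g$ and $\ell_0\cdot f=1$, so twisting by $\mathcal O(f)$ already gives all residues.

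First we identify $|\ell_0|$. We compute $h^0(\mathcal O(s+gf))=g+1$, and note that $s$ is a fixed component: $(s+gf)\cdot s=g-2$, but $\mathcal O_{S_0}(gf)=\rho^\ast\mathcal O_{\mathbb P^1}(g)$ already has $g+1$ sections. Hence every member is $s+\rho^{-1}(D)$ with $D\in(\mathbb P^1)^{(g)}$, which justifies the identification $|\ell_0|\cong(\mathbb P^1)^{(g)}$ stated before the proposition. Each such curve $C_D=s\cup\rho^{-1}(p_1)\cup\dots\cup\rho^{-1}(p_g)$ is a comb: the section meets each fiber transversally in one point, and the fibers are integral. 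Since $\operatorname{Pic}$ is generated by $s$ and $f$, the class $\ell_0$ is primitive and a generic polarization makes $M_{S_0}(\ell_0,\chi)$ a smooth projective hyper-K\"ahler of dimension $2g$.

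Next we build the map. The cleanest route is via stable pairs and \cite[Proposition B.8]{PT}, or equivalently the classical relative-Hilbert description. Choose $\chi$ with $n=\chi+g-1=g$, i.e.\ $\chi=1$; the arithmetic genus of $C_D$ is $g$. Send a length-$g$ subscheme $Z\subset S_0$ to the unique $D=\rho_\ast(Z)\in(\mathbb P^1)^{(g)}$, so that $Z\subset C_D$ and $Z$ has length $1$ on each fiber counted with multiplicity. This should recover the natural map $S_0^{[g]}\to\operatorname{Hilb}^g_{\mathcal C/|\ell_0|}$ to the relative Hilbert scheme, compatible with $\rho^{[g]}$. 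We then send a pair $(Z\subset C_D)$ to the sheaf $\mathcal I_{Z\subset C_D}^\vee$, or better $\mathcal F=\mathcal{E}xt^1(\mathcal I_Z\otimes\mathcal O(-s)\ldots)$, adjusting by $\mathcal O(s)$ so that the resulting sheaf is stable for a polarization close to $f$. The expected stability check is this: a destabilizing quotient would be supported on a subcurve $s+\sum_{i\in I}\rho^{-1}(p_i)$ or on a union of fibers. A polarization with $H\cdot s$ small relative to $H\cdot f$, combined with the fact that $Z$ puts exactly degree one on each fiber, rules these out. Conversely, a stable sheaf of this class restricted to each integral fiber $\rho^{-1}(p_i)$ should be a rank one torsion-free sheaf of degree $1$ twisted appropriately, hence $\mathcal O(q_i)$ for a unique point $q_i$, recovering $Z$. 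This gives the inverse map, and the square commutes by construction.

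To finish, we verify that both maps are morphisms inverse to each other. A bijective morphism between smooth (hence normal) proper varieties of the same dimension $2g$ is an isomorphism by Zariski's main theorem, so it suffices to check bijectivity on points, including nonreduced $Z$ and colliding $p_i$. The main obstacle is exactly this stability and bijectivity analysis over degenerate $D$, where several fibers coincide and the curve $s+k\rho^{-1}(p)$ is nonreduced. There I would first try to argue fiberwise by the Fourier--Mukai/spectral description along the elliptic fibration: the relative Fourier--Mukai transform with kernel the relative Poincaré sheaf sends a sheaf of class $\ell_0$ and fixed $\chi$ to an ideal sheaf of $g$ points. This is a derived equivalence $D(S_0)\to D(S_0)$, and it maps the Mukai vector $(0,\ell_0,\chi)$ to $(1,0,1-g)$. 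With a suitable polarization it preserves stability, giving $M_{S_0}(\ell_0,\chi)\cong S_0^{[g]}$ globally, and it is compatible with the projections to $(\mathbb P^1)^{(g)}$ because it is relative over $\mathbb P^1$.
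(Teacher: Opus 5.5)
Your identification of $|\ell_0|$ is fine, but the main construction fails, and not only over non-reduced $D$. Take distinct $p_1,\dots,p_g$ with smooth fibres, $x_1=s\cap\rho^{-1}(p_1)$, $y_j\in\rho^{-1}(p_j)\setminus s$ for $j\ge 2$, and $Z=\{x_1,y_2,\dots,y_g\}$. Then $\rho_\ast Z=D$ and $Z\subset C_D$. Because $x_1$ is a node, and $\rho^{-1}(p_1)$ meets the rest $C'$ of the comb only there, you get $\mathcal I_{Z\subset C_D}\cong\mathcal O_{\rho^{-1}(p_1)}(-x_1)\oplus\mathcal O_{C'}(-x_1-y_2-\cdots-y_g)$. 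So $\mathcal I_{Z\subset C_D}^\vee$ is decomposable, hence unstable for every polarization, and no twist by a line bundle on $S_0$ (such as your adjustment by $\mathcal O(s)$) can repair that. The construction also cannot be surjective. Consider the line bundle on $C_D$ that is trivial on $s$ and restricts to $\mathcal O(x_1)$ and $\mathcal O(y_j)$ on the fibres. It is $H$-stable for every ample $H=s+tf$, $t>2$: its restriction to a proper subcurve containing $s$ and $k<g$ fibres has slope $1/(t-2+k)$, and to a union of fibres slope $1$, both exceeding $1/(t-2+g)$. Yet every global section of it vanishes on $s$. So it underlies no stable pair, any map built from \cite[Proposition B.8]{PT} at $\chi=1$ misses it, and there is no bijection for Zariski's main theorem to act on. Your polarization requirements are also inconsistent: for $H=s+tf$ one has $H\cdot s=t-2$ and $H\cdot f=1$, so ``$H$ close to $f$'' forces $H\cdot s\gg H\cdot f$, the opposite of what your stability check assumes.

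What actually proves the statement is your last paragraph, and that is the paper's proof. The paper cites \cite[Theorem 3.15]{yoshioka}, by which the relative Fourier--Mukai transform induces $M_{S_0}(1,(\chi-1)f,1-g)\cong M_{S_0}(\ell_0,\chi)$ for $\chi>1$. It identifies the left side with $S_0^{[g]}$ via $Z\mapsto\mathcal I_Z\otimes\mathcal O((\chi-1)f)$, and reads the support $s+\rho^{-1}(p_1)+\cdots+\rho^{-1}(p_g)$ off Yoshioka's computation. Preservation of stability, which you only assert, is the substance of that theorem, so it must be cited or proved. The target vector is $(1,(\chi-1)f,1-g)$; this equals your $(1,0,1-g)$ only at $\chi=1$, which is outside the range $\chi>1$ the paper uses, and the paper reaches $\chi\le 1$ by twisting with $\mathcal O(\ell_0)$. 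Your normalization by $\mathcal O(f)$ is not harmless: it raises $\chi(\mathcal F)$ by $1$ but leaves $\chi$ of fibre-supported subsheaves unchanged, so it does not preserve $H$-stability in general. Finally, ``relative over $\mathbb P^1$'' pins down the support only over reduced $D$. The square still commutes everywhere, because both maps $S_0^{[g]}\to(\mathbb P^1)^{(g)}$ are morphisms from an integral variety that agree on a dense open; you should say this explicitly.
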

\begin{proof}
    Let us first construct this for $\chi>1$.
    This is a consequence of \cite[Theorem 3.15]{yoshioka}, which says that the relative Fourier-Mukai transform on the elliptic surface gives an isomorphism
    \[M_{S_0}(1,(\chi-1)f,1-g)\cong M_{S_0}(\ell_0,\chi)\]
    for any $\chi>1$.
    Here, $M_{S_0}(1,(\chi-1)f,1-g)$ is the moduli of stable sheaves on $S_0$ with Mukai vector $(1,(\chi-1)f,1-g)$.

    Note now that $M_{S_0}(1,(\chi-1)f,1-g)$ consists precisely of sheaves of the form $\mathcal I_Z\otimes\mathcal O((\chi-1)f)$ where $Z$ is a length $g$ subscheme.
    By the calculation in \cite[(3.15)]{yoshioka} (see also \cite{bai2025}), the image of such a sheaf under this isomorphism is supported on the Cartier divisor $s+\rho^{-1}(p_1)+\cdots+\rho^{-1}(p_g)$, where $p_1+\cdots+p_g=\rho^{[g]}(Z)$.

    Hence the stated isomorphism is the desired one.
    For $\chi\le 1$, simply note that the operation $-\otimes\mathcal O(\ell_0)$ establishes an isomorphism
    \[M_{S_0}(\ell_0,\chi)\to M_{S_0}(\ell_0,\chi+2g-2)\]
    that clearly preserves the respective support maps.
\end{proof}
\begin{corollary}
    The Beauville-Mukai system $M_{S_0}(\ell_0,\chi)\to |\ell_0|$ has a section.
    In particular, the integral homology class of a general fiber is primitive.
\end{corollary}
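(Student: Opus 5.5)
The statement just proved is the corollary: $M_{S_0}(\ell_0,\chi)\to|\ell_0|$ has a section, so a general fiber has primitive integral homology class. I would deduce it directly from the commutative square of the preceding proposition, working on the Hilbert scheme side $\rho^{[g]}:S_0^{[g]}\to(\mathbb P^1)^{(g)}$.

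\textbf{Constructing the section.} The section $s:\mathbb P^1\to S_0$ of the elliptic fibration induces a map on symmetric products $s^{(g)}:(\mathbb P^1)^{(g)}\to S_0^{(g)}$. Its image lies in $s(\mathbb P^1)^{(g)}$, the $g$-th symmetric product of a smooth curve. The Hilbert--Chow morphism restricted over the symmetric product of a smooth curve is an isomorphism, since $C^{[g]}\cong C^{(g)}$ for a smooth curve $C$ and the relevant subschemes are curvilinear. So $s^{(g)}$ lifts to a morphism
\[\sigma:(\mathbb P^1)^{(g)}\cong s(\mathbb P^1)^{[g]}\hookrightarrow S_0^{[g]},\]
sending $p_1+\cdots+p_g$ to the subscheme $s(p_1)+\cdots+s(p_g)$ of the curve $s(\mathbb P^1)$. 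Because $\rho\circ s=\mathrm{id}$, we get $\rho^{[g]}\circ\sigma=\mathrm{id}$. Transporting $\sigma$ through the isomorphisms in the commutative square of the preceding proposition gives a section of $M_{S_0}(\ell_0,\chi)\to|\ell_0|$.

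\textbf{Primitivity.} Let $F$ be a general fiber, of complex dimension $g$ in $M=M_{S_0}(\ell_0,\chi)$, which has dimension $2g$. The section $\sigma(B)$ is a closed subvariety of complex dimension $g$, and it meets $F$ transversally in exactly one point, because $\sigma$ is a section of a morphism that is smooth at that point. The intersection pairing on $H_{2g}(M;\mathbb Z)$ therefore gives $[F]\cdot[\sigma(B)]=1$. If $[F]=k\alpha$ for some integral class $\alpha$, then $k$ divides $1$, so $[F]$ is indivisible.

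\textbf{Main subtlety.} The only step needing care is the lift from $S_0^{(g)}$ to $S_0^{[g]}$. I would justify it as above, by the isomorphism $C^{[g]}\to C^{(g)}$ for smooth curves $C$, composed with the closed embedding $C^{[g]}\subset S_0^{[g]}$ for $C=s(\mathbb P^1)$. Everything else is formal.
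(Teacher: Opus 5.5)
Your proposal is correct and is essentially the argument the paper intends. The paper gives no written proof, treating the corollary as immediate from the commutative square: the section of $\rho$ induces a section of $\rho^{[g]}$ via $s(\mathbb P^1)^{[g]}\cong(\mathbb P^1)^{(g)}$, and primitivity follows because that section meets a general fiber with intersection number $1$. You could even skip the transversality check, since $[F]\cdot\sigma_\ast[B]=\deg\sigma^\ast h^\ast[\mathrm{pt}]=1$ by the projection formula.
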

There is a smooth connected Deligne-Mumford stack parameterizing pairs $(S,\ell)$ where $S$ is a K3 surface, and $\ell$ is a quasi-polarization, \emph{i.e.}~a big and nef line bundle on $S$ whose class is primitive and satisfies $\ell^2=2g-2$.
Let $\mathcal H_g$ be a cover of this moduli space by a smooth connected variety.
\begin{proof}[Proof of the ``if'' part in \Cref{prop:primfibers}]
    The construction of Beauville-Mukai systems is functorial, so we obtain a family of fibrations
    \[u:\mathcal M\to\mathcal H_g\times\mathbb P^g\]
    such that the fiber over any $(S,\ell)\in\mathcal H_g$ is the Beauville-Mukai system
    \[M_S(\ell,\chi)\to\mathbb P^g.\]

    We claim that if $(S,\ell)\in\mathcal H_g$ satisfies the condition that a general fiber of the system $M_S(\ell,\chi)\to\mathbb P^g$ has primitive homology, then so does any $(S',\ell')$ contained in a contractible open $U\subset\mathcal H_g$ containing $(S,\ell)$.
    Indeed, let us consider the restriction
    \[\mathcal M_U\to U\times\mathbb P^g\]
    as a fibration.
    All general fibers of this are then homologous in $H^{2g}(\mathcal M_U;\mathbb Z)$.

    On the other hand, for any $(S',\ell')\in U$, there is a natural isomorphism
    \[H^{2g}(M_{S'}(\ell',\chi);\mathbb Z)\cong H^{2g}(\mathcal M_U;\mathbb Z)\]
    as $U$ is contractible.
    But the class of a general fiber is primitive in $H^{2g}(M_S(\ell,\chi);\mathbb Z)$, so it would have to be primitive in $H^{2g}(M_{S'}(\ell',\chi);\mathbb Z)$ as well.

    As $\mathcal H_g$ can be covered by contractible opens, it remains to find one $(S,\ell)\in\mathcal H_g$ such that the conclusion of the proposition holds.
    This is given by the preceding corollary.
\end{proof}
\subsection{Multiplicities in generically abelian fibrations}
Let us now demonstrate that when $r>1$ the fiber class is not primitive.
The K3 condition is not necessary.
In fact, we will show:
\begin{proposition}\label{prop:nonprimfibers}
    Suppose we are under the hypotheses of \Cref{intro:mainthm} for either (CY1) or (F1) with $r>1$, then a general fiber of the support fibration is not primitive.
\end{proposition}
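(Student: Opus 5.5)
The plan is to use that all fibers of $h:M\to|\beta|$ are homologous. Since $M$ and $B=|\beta|$ are smooth projective and $h$ is flat and proper, the class of a general fiber $[M_b]$ equals the class of the fundamental cycle of any fiber $[h^{-1}(b)]$. For this, take a curve $T\subset B$ through $rC$ and a general point $b$. Pulling back along $T$ (or its normalization) gives a family of cycles. Alternatively, write $[h^{-1}(b)]=h^\ast[\mathrm{pt}]$ via the refined Gysin pullback, and use that $h$ is flat and $B$ is connected. Then
\[[M_b]=[h^{-1}(rC)]=\sum_j m_j[F_j],\]
where the $F_j$ are the irreducible components of the global nilpotent cone and $m_j$ are their multiplicities in the scheme-theoretic fiber.

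Next I would invoke nowhere-reducedness. In case (CY1), \Cref{k3:irreg_nonred} handles irregular components and \Cref{greg_nonred} handles generically regular ones; the hypothesis $2\mid\deg N_{C/S}$ is automatic here, since $N_{C/S}\cong K_C$. In case (F1), the Corollary following \Cref{prop:delpezzogreg} gives the same conclusion under the stated parity hypothesis. Either way every $m_j\ge 2$.

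This alone does not give divisibility, because the $m_j$ might have $\gcd$ one, for example $2$ and $3$. So the main obstacle is to produce a common factor. I would try to show that all multiplicities are divisible by a fixed integer $e>1$. The natural candidate is $e=\gcd(g_\beta-1,r)$, or some divisor of $r$, coming from the argument of \Cref{greg_nonred}. Concretely, for generically regular components the multiplicity equals the multiplicity of the corresponding component of $\operatorname{Hilb}^n_{rC}$, since $f$ is smooth with connected fibers. Luan's local description would then give a product over the points of the partition, but only a lower bound is available, so this route seems hard.

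A more robust alternative is to pass to cohomology instead of cycles. Suppose the fiber class were primitive. Then there would be an integral class $\alpha\in H^{2d}(M;\mathbb Z)$ with $\alpha\cdot[M_b]=1$. The restriction of $\alpha$ to the nilpotent cone pairs with each $[F_j]$ to an integer, so $1=\sum_j m_j(\alpha\cdot[F_j])$. To break this, I would need either equal multiplicities or a specific shape of the $[F_j]$. One approach is to use the $\mathbb G_m$-degeneration from \Cref{sec:Higgs} via the deformation to the normal cone, which relates $[h^{-1}(rC)]$ to the attracting cells $S_m$. I would check whether the relevant components all have multiplicity divisible by a common number determined by $r$ and $n\bmod r$, for instance using that generic points of every component have $m_i<r$ for some $i$, which forces a factor $(r-m_i+1)\ge2$. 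Establishing this uniform divisibility, rather than mere non-reducedness, is the step I expect to be hardest.
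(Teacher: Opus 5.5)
Your first two steps are right and agree with the paper. Because $h$ is flat and proper over a connected proper base, a general fiber has class $h^\ast[\mathrm{pt}]=[h^{-1}(rC)]=\sum_j m_j[F_j]$. The non-reducedness results give $m_j\ge 2$ for every component, and $N_{C/S}\cong K_C$ makes the parity condition automatic in (CY1).

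But, as you note yourself, this does not produce a common factor, and none of the routes you sketch can supply one. Luan's theorem only gives lower bounds $\prod_i(r-m_i+1)$ for the multiplicities of components of $\operatorname{Hilb}^n_{rC}$. It gives neither their values nor any divisibility. It also only reaches generically regular components, whereas the irregular components in (CY1) are shown non-reduced by the stabilizer argument, which gives no number at all. The $\mathbb G_m$-stratification and the deformation to the normal cone only yield dimension counts, dominance and lower bounds, never exact multiplicities. There is likewise no reason to expect a specific integer such as $\gcd(g_\beta-1,r)$ to divide every $m_j$. So the proposal has a genuine gap exactly at the step that matters.

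The missing idea is a global one: the theorem of \cite{CKV,lu_preprint_01} that in a generically abelian fibration, a fiber whose component multiplicities have greatest common divisor $1$ must contain a reduced component. The support fibration is generically abelian: over a smooth $C'\in|\beta|$ the fiber is $\operatorname{Pic}^{\chi+g(C')-1}(C')$, a torsor under the Jacobian (\Cref{lem:picnuaction}). Applying the contrapositive to the nowhere-reduced fiber $h^{-1}(rC)$ gives $e:=\gcd_j m_j\ge 2$. Hence $[M_b]=e\sum_j (m_j/e)[F_j]$ is divisible by $e$. In your cohomological formulation, $\sum_j m_j(\alpha\cdot[F_j])$ is a multiple of $e$ and cannot equal $1$. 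This is exactly how the paper argues, and it makes the search for an explicit common divisor unnecessary: the divisibility comes from the torsor structure of the generic fiber, not from any local computation of multiplicities.
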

This implies the ``only if'' part in \Cref{prop:primfibers}.
Indeed, since $\ell$ is basepoint-free and big and nef, $|\ell|$ contains a smooth curve $C$ of genus at least $2$.

We make use of the following result:
\begin{theorem}[\cite{CKV,lu_preprint_01}]
    Suppose $M\to B$ is a generically abelian fibration, and $F$ is a fiber of it.
    If the greatest common divisor of multiplicities of the components of $F$ is $1$, then $F$ has a reduced component.
\end{theorem}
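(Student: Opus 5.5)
The plan is to reduce to a one-parameter degeneration of abelian torsors over a strictly henselian trait. There, the gcd of the fiber multiplicities computes the index of the generic fiber. Index one then produces a section, and a section must pass through a reduced component.

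\emph{Step 1 (reduction to a trait).} Let $b=h(F)$ and write $F=\sum m_iF_i$. Choose a general smooth curve germ $T\subset B$ through $b$, cut out by general members of a very ample system through $b$, and set $X_T=h^{-1}(T)$. Since $h$ is flat and $X$ is smooth, $X_T$ is a local complete intersection, flat over $T$, with special fiber $F$ as a scheme. In particular the multiplicity of $F_i$ in $X_T\times_T\{b\}$ is still $m_i=\operatorname{length}\mathcal O_{F,\eta_i}$. I then pass to the strict henselization $R$ of $\mathcal O_{T,b}$, with fraction field $K$. Since $B^\circ$ is dense and $T$ is general, $X_K$ is a torsor under an abelian variety $A_K$.

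\emph{Step 2 (the main obstacle: regularity).} The result I want to invoke is that of Raynaud, and of Gabber--Liu--Lorenzini in higher dimension: for a proper flat $R$-model whose total space is regular at the generic points of the special fiber, the index of the generic fiber equals the gcd of the multiplicities of the special fiber. The difficulty is that $X_T$ need not be regular along the generic points $\eta_i$ of the $F_i$. I would first try to show that a general choice of $T$ gives regularity there, arguing on the normal directions as follows. The components $F_i$ have codimension $\dim B$ in the smooth variety $X$. The condition that $X_T$ be singular at $\eta_i$ is that $T_bT$ lie in a proper linear subspace determined by the image of $dh$ at $\eta_i$ after taking leading terms; I expect this to be avoidable by choosing the direction of $T$ generically. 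If this fails, I would fall back to normalizing or resolving $X_T$ away from a codimension-two locus. This does not change the multiplicities at the $\eta_i$ once regularity at the $\eta_i$ holds, and the index formula only depends on the generic points of the special fiber.

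\emph{Step 3 (index one gives a rational point).} By Step 2, $\gcd(m_i)=1$ means $X_K$ has index $1$. The period of a torsor under an abelian variety divides its index, so the class $[X_K]\in H^1(K,A_K)$ is trivial. (Here $R$ is strictly henselian of residue characteristic $0$, and the classes involved are torsion.) Hence $X_K(K)\neq\emptyset$.

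\emph{Step 4 (the section meets a reduced component).} By properness, a point $x\in X_K(K)$ extends to a section $\sigma:\operatorname{Spec}R\to X_R$. Let $y=\sigma(b)$. If every component of $F$ through $y$ had multiplicity at least $2$, then locally at $y$ a uniformizer $t$ of $R$ would lie in $\mathfrak m_y^2$ modulo the local equations of the special fiber. Pulling back along $\sigma$ would then give $t\in\mathfrak m_R^2$, a contradiction. To make this rigorous I would use that $X_R$ is regular near $y$, which I would arrange by moving $x$ slightly using the $A_K$-action so that $y$ specializes to a generic-type point, or else work directly at the generic points $\eta_i$. It follows that some $F_i$ through $y$ has $m_i=1$, so $F$ has a reduced component.
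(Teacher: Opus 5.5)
The paper gives no proof of this statement; it quotes it from \cite{CKV,lu_preprint_01}. So your argument has to stand on its own. It does when $\dim B=1$, since then $X_T=M$ is regular and Step~4 is the standard argument. Your Step~3 is also fine in general, and it does not need Step~2. Take a smooth germ of dimension $\dim B$ through a general point of $F_i$, transversal to $F_i$; it is finite flat of degree $m_i$ over $\mathcal O^{sh}_{B,b}$. Hence the index divides $\gcd(m_i)$, and Lang--Tate gives a $K$-point.

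The gap is in passing from that point to a reduced component when $\dim B\ge 2$.

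\emph{Step~2's genericity claim is false.} By the Jacobian criterion, $X_T$ is regular at $y\in F$ if and only if $T_bT+\operatorname{im}(dh_y)=T_bB$. So if $dh$ has corank $\ge 2$ generically along $F_i$, then $X_T$ is singular along all of $F_i$ for \emph{every} $T$; being lci, it is then non-normal there. This happens in two natural situations:
\begin{itemize}
\item For $S_1\times S_2\to\mathbb P^1\times\mathbb P^1$ with $S_j$ elliptic surfaces having $I_0^*$ fibres, $dh=0$ on the product of the two central components.
\item In this paper's own setting, by the duality of \cite{dCRS}, the corank of $dh$ at a point of an irregular component is at least the dimension of its stabilizer. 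That stabilizer contains $\ker(\operatorname{Pic}^\nu(rC)\to\operatorname{Pic}^\nu(r'C))$.
\end{itemize}
Your fallback is circular. Normalizing splits the preimage of $F_i$ and replaces $m_i$ by multiplicities $\tilde m_j$ with $m_i=\sum_j\tilde m_jd_j$. Resolving adds exceptional components.

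\emph{More decisively, even if $X_T$ is regular at every $\eta_i$, the implication ``section over $T$ $\Rightarrow$ reduced component of $F$'' fails.} The point $\sigma(b)$ is a closed point you cannot choose, and $X_T$ can be singular exactly there. Take $h=(w,\,x^2-wy)\colon\mathbb A^3\to\mathbb A^2$, with coordinates $(t,s)$ on the target:
\begin{itemize}
\item $h$ is flat, and $h^{-1}(0)$ is the $y$-axis with multiplicity $2$.
\item For every curve $T$ through $0$ with tangent direction $(1,c)$, which is a general $T$, the surface $X_T$ is regular at the generic point of the special fibre. It has an $A_1$ point at $(0,-c,0)$.
\item $X_T$ has sections, all passing through that point; for the line $s=ct$, one is $x=w$, $y=w-c$.
\item On the minimal resolution these sections meet the exceptional curve, which has multiplicity $1$ and says nothing about $F$.
\end{itemize}
So the equality of index and $\gcd$ needs regularity where sections specialize, not just at generic points. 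Translating by $A_K(K)$ cannot help: $A_K$ does not act on the model, and a section never specializes to $\eta_i$.

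What the statement really requires is a point $y\in F$ with $dh_y$ surjective, that is, a section of $h$ over an \'etale neighbourhood of $b$ in $B$. A section over one curve only yields $T_bT\subset\operatorname{im}(dh_{\sigma(b)})$. Index one over $\operatorname{Frac}\mathcal O^{sh}_{B,b}$ only yields a rational section, which may be undefined at $b$. The missing idea is a way to produce such a point, or to extend that rational section through $b$; no reduction to a single trait supplies it.
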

\begin{corollary}
    Suppose $h:M\to B$ is a generically abelian fibration with $B$ proper, and suppose it has a fiber with no reduced components, then a general fiber of $h$ does not have primitive integral homology class.
\end{corollary}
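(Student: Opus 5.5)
We prove the contrapositive: if a general fiber $F_{\rm gen}$ of $h$ has primitive class, then every fiber has a reduced component.

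\textbf{Step 1: fibers are homologous.} Since $B$ is proper (and connected, being the base of a fibration with connected fibers), all fibers of the flat proper map $h$ define the same class in $H_{2d}(M;\mathbb Z)$, where $d$ is the relative dimension. Let $F$ be any fiber and write its fundamental cycle as $\sum_i m_i[F_i]$, where the $F_i$ are its irreducible components and $m_i$ their multiplicities. Then
\[
[F_{\rm gen}]=[F]=\sum_i m_i[F_i]\in H_{2d}(M;\mathbb Z).
\]
To justify this, take $F$ as a specialization of the generic fiber along a curve $T\subset B$ through $h(F)$, normalized if necessary. The cycle $[h^{-1}(t)]$ is the refined Gysin pullback of $[M]$ along $t\hookrightarrow T$, which equals $h^\ast$ of the point class. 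The pullback $h^\ast[\mathrm{pt}]$ is independent of the point because $B$ is connected. Since the fibers are Cartier divisors pulled back along $T$, flat pullback respects multiplicities.

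\textbf{Step 2: no reduced component forces divisibility.} Suppose some fiber $F$ has no reduced component, so $m_i\ge 2$ for all $i$. By the theorem of \cite{CKV,lu_preprint_01}, applied to the generically abelian fibration $h$, we get
\[
m:=\gcd_i m_i>1.
\]
Hence
\[
[F_{\rm gen}]=m\sum_i (m_i/m)[F_i]
\]
is divisible by $m$ in $H_{2d}(M;\mathbb Z)$, so it is not primitive.

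\textbf{Expected obstacle.} The main point to check is Step 1: the fiber class must be computed with scheme-theoretic multiplicities, and the homology class of a cycle of fibers must be constant over all of $B$, including special points. This is where properness of $B$ is used: it makes $M$ proper, so the classes of the integral components $[F_i]$ are defined in Borel--Moore homology, which agrees with ordinary homology. Connectedness of $B$ is what gives independence of the point. Everything else is formal.
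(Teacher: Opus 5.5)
Your proposal is correct and follows essentially the same route as the paper: the fiber class is $h^\ast$ of the point class, which does not depend on the point, and a fiber with no reduced component has multiplicities with $\gcd\ge 2$ by the cited theorem of \cite{CKV,lu_preprint_01}, so that class is divisible. The only difference is that you spell out why the fiber cycle with scheme-theoretic multiplicities represents $h^\ast[\mathrm{pt}]$, which the paper simply asserts; your opening ``contrapositive'' framing is unnecessary, since you actually argue the statement directly.
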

\begin{proof}
    For any point $p\in B$, the class of the fiber $M_p$ in $M$ coincides with the pullback $h^\ast[p]$ on cohomology.

    The integral homology class of $[M_p]$ is divisible by the greatest common divisor of the multiplicities of its components.
    By the preceding theorem, this greatest common divisor is at least $2$ if $M_p$ has no reduced components.

    On the other hand, all points in $B$ are homologous, so the corollary follows.
\end{proof}
\begin{proof}[Proof of \Cref{prop:nonprimfibers}]
    Suppose $r>1$.
    Then by what we know already, the fiber over $rC$ is nowhere reduced.
    On the other hand, the support fibration is generically abelian as its general fiber is a Jacobian of a smooth curve.
    The result follows from the preceding corollary.
\end{proof}

\bibliographystyle{plain}
\bibliography{ref}

\end{document}